\newtheorem{theorem}{Theorem}[section]
\newtheorem{proposition}[theorem]{Proposition}
\newtheorem{corollary}[theorem]{Corollary}
\newtheorem{lemma}[theorem]{Lemma}
\newtheorem{example}[theorem]{Example}
\theoremstyle{definition}
\theoremstyle{remark}
\newtheorem{remark}[theorem]{Remark}
\theoremstyle{definition}
\def\cdots{\mathinner{\cdotp\cdotp\cdotp}}
\def \diam{\text{diam }}
\newcommand{\ds}{\displaystyle}
\newcommand{\A}{\mathcal A} 
 \newcommand{\D}{\mathcal D}
 \newcommand{\M}{M}
\newcommand{\R}{\ensuremath{\mathbb{R}}}
\newcommand{\MM}{\ensuremath{\mathcal{M}}}
\newcommand{\NN}{\ensuremath{\mathcal{N}}}
\newcommand{\T}{\ensuremath{\mathcal{T}}}
\newcommand{\Mb}{\ensuremath{\mathbb{M}}}
\newcommand{\N}{\ensuremath{\mathbb{N}}}
\newcommand{\Z}{\ensuremath{\mathbb{Z}}}
\newcommand{\coarse}{\ensuremath{\underset{coarse}{\lhook\joinrel\relbar\joinrel\rightarrow}}}
\newcommand{\strong}{\ensuremath{\underset{strong}{\lhook\joinrel\relbar\joinrel\rightarrow}}}
\newcommand{\isometricword}{\ensuremath{\underset{isometric}{\lhook\joinrel\relbar\joinrel\relbar\joinrel\relbar\joinrel\rightarrow}}}
\newcommand{\lip}{\ensuremath{\underset{Lip}{\lhook\joinrel\relbar\joinrel\rightarrow}}}
\def\diam{\text{diam }}
\def\supp{\text{supp}}
\newcommand{\mk}{\medskip}
\begin{document}

\title[Quantitative strong embeddings]{Quantitative nonlinear embeddings into Lebesgue sequence spaces}

\author[F. Baudier]{Florent Baudier}
\address{Mathematics Department\\ Texas A\&M University\\College Station, TX 77843 USA}
\email{florent@math.tamu.edu}


\subjclass[2010]{46B20, 46B85, 46T99, 20F65}

\date{}

\dedicatory{}

\begin{abstract}In this paper fundamental nonlinear geometries of Lebesgue sequence spaces are studied in their quantitative aspects. Applications of this work are a positive solution to the strong embeddability problem from $\ell_q$ into $\ell_p$ ($0<p<q\le 1$) and  new insights on the coarse embedabbility problem from $L_q$ into $\ell_q$, $q>2$. Relevant to geometric group theory purposes, the exact $\ell_p$-compressions of $\ell_2$ are computed. Finally coarse deformation of metric spaces with property A and locally compact amenable groups is investigated.
\end{abstract}

\maketitle


\section{Introduction}

\subsection{Motivation and organization of the article}\ \\
We start by explaining our motivation to write this paper and describe quite in details the actual panorama of the coarse, uniform and strong geometries of the classical Lebesgue spaces. We invite the reader to look up in Section \ref{notation} for the definitions or technical terms used in this introductory section.

\mk

The original motivation was to study nonlinear embeddings from $\ell_p$ into $\ell_q$ when $0<q<p\le 1$. In \cite{Albiac2008} it was proven that there is no nontrivial Lipschitz map, hence no bi-Lipschitz embedding, from $\ell_p$ into $\ell_q$ when $0<q<p\le 1$. The question was raised in \cite{AlbiacBaudier2013} wether or not there exists a weaker type of embedding. In the present paper the existence of a strong embedding is proven and quantitative estimates for the compression and expansion moduli are given. It turns out that the technique used in this paper to construct this quantitative strong embedding is directly inspired by techniques introduced by geometric group theorists and is naturally relevant to the \textit{quantitative} study of embeddings into sequence spaces of metrics of Lipschitz-negative type, non-discrete metric spaces with property A and locally compact amenable groups. It is also clear that estimating the compression exponent of a coarse embedding between two Banach spaces is a fundamental question in geometric group theory. The reader interested in the geometric group theoretic applications would certainly find enlightening to consult the monograph from Nowak and Yu \cite{NowakYu}.  

\smallskip

The theory of Lipschitz (or coarse) embeddings of locally finite metric spaces into $L_p$ or $\ell_p$ is essentially the same. Indeed, it is known that Lipschitz (or coarse) embeddability into Banach spaces is finitely determined for locally finite metric spaces (see \cite{Ostrovskii2012}). Therefore to embed a locally finite metric space into $\ell_1$, let's say, it is sufficient to embed into $L_1$ since every finite subset of $L_1$ is isometric to  subset of $\ell_1$. Note that the quality of a coarse embedding would be preserved up to some multiplicative constants. In particular the $\ell_p$-compression of a finitely generated group coincides with its $L_p$-compression. However the situation is drastically different for non-locally finite metric spaces. For instance $\ell_2$ is linearly isometric to a subspace of $L_p$ $(1\le p\neq 2<\infty)$ but does not bi-Lipschitzly embed into $\ell_p$. In general, it seems very unlikely that one can derive a coarse embedding into $\ell_p$ out of a coarse embedding into $L_p$ maintaining the same quality since it follows from the work of Kalton and Randrianarivony \cite{KaltonRandrianarivony2008} (see also \cite{Lancien}) that $L_p$ does not admit an embedding into $\ell_p$ which is bi-Lipschitz for large distances (a.k.a quasi-isometric embedding in geometry group theory dictionary). One of the objectives of this article is to provide quantitative strong embeddings of certain non-locally finite metric spaces into ``small" $L_p(\mu)$-spaces. For instance, the target spaces for all the embeddings that are to be constructed are of the form $\ell_p(\Gamma)$ for some set $\Gamma$.

\smallskip

Coarse embeddings of groups into Hilbert spaces and its quantitative relative, namely the Hilbert compression, have numerous applications in Noncommutative Geometry and Topology. However, Hilbert spaces are, in some sense, the most difficult spaces to embed into in the coarse category. First of all a separable Hilbert space can be coarsely embedded in every Banach space with an unconditional basis and finite cotype \cite{Ostrovskii2009}. This rather large class contains the classical separable $L_p$-spaces. On the other hand Naor and Peres proved that for finitely generated groups that the equivariant $L_2$-compression is actually minimal among all the other equivariant $L_p$-compressions $(p\ge 1)$ (Lemma $2.3$ in \cite{NaorPeres2008}). We want to point out that it follows from \cite{Ostrovskii2009}, or \cite{Baudier2012}, and Dvoretzky's theorem that for locally finite metric spaces the non-equivariant Hilbert-compression is minimal among all the other non-equivariant $Y$-compressions for any infinite dimensional Banach space $Y$. It does not seem that this fact has been noticed before. Embeddings involving a separable Hilbert space either as the domain space or the target space are studied from the quantitative standpoint in this paper. The present article is divided into two parts and is organized as follows: 

\smallskip

In the first part, quantitative nonlinear embeddings between Lebesgue sequence spaces are investigated. A fine quantitative analysis of a generalization of a device, introduced by Guentner and Dadarlat, to construct coarse embeddings into Hilbert spaces is detailed. This device is used to produce strong deformation gaps of separable Hilbert spaces into $\ell_q$-sums of metric linear spaces, in particular into the classical Lebesgue sequence spaces. A natural application to the embedding of metrics of Lipschitz-negative type is given. Finally the exact $\ell_q$-compression of $\ell_p$ is computed using a very powerful result of Kalton and Randrianarivony. As an application of this computation we give derive some very interesting new information regarding the important open problem of the coarse embeddability of $L_p$ into $\ell_p$ when $p>2$.

\smallskip

In the second part, using essentially metric techniques, rather general coarse deformation gaps when embedding non-discrete metric measured spaces with property A and locally compact amenable groups are given. The behavior of the compression modulus is related to a radial dilation parameter for sequences of sets associated to property A or amenability.

\subsection{Notation and terminology}\label{notation}\ \\

Throughout this paper one will use the convenient notation $A\lesssim B$ (resp. $A\gtrsim B$) if there is a universal constant $C$ such  that $A\le CB$ (resp. $A\ge CB$), A and B being real numbers or functions. 

\smallskip

\noindent A metric space will be said to be \textit{uniformly discrete} if there exist $\varepsilon>0$ such that $d(x,y)\ge \varepsilon$ for all pair of distinct points.  

\smallskip

\noindent Recall that for a non-decreasing function $T:\R\to\R$ the \textit{generalized inverse} is the function  $T^-:\R\to[-\infty,\infty]$ defined by $$T^-(y)=\inf\{x\in \R,T(x)\ge y\}$$ with the convention that $\inf\emptyset=\infty$. If $T$ is increasing and continuous $T^-$ coincides with $T^{-1}$, the ordinary inverse of $T$, on the range of $T$.

\medskip

\paragraph{\textbf{Countable $\ell_p$-sum of pointed metric spaces for $0<p<\infty$}}\ \\
Let $(\MM_n,\delta_n)_{n\ge 1}$ be a sequence of metric spaces. Pick a point $O_n$ in each space. The point $O_n$ is always chosen to be $0$ when $\MM_n$ is a metric linear space.
The $\ell_p$-sum of the sequence of pointed metric spaces $(\MM_n)_{n\ge 1}$ is the space $$\ds\left(\sum_{n=1}^\infty \MM_n\right)_{p}:=\left\{z=(z_n)_{n\ge1}\in\prod_{n=1}^{\infty} \MM_n;\  \vert z\vert_p:=\sum_{n=1}^{\infty}\delta_n(z_n,O_n)^p<\infty\right\},$$ equipped with the distance $$\Delta_p(x,y):=\sum_{n=1}^{\infty} \delta_n(x_n,y_n)^p,\ 0<p\le 1$$
or $$\Delta_p(x,y):=\left(\sum_{n=1}^{\infty} \delta_n(x_n,y_n)^p\right)^{1/p},\ p\ge 1$$
If $\MM_n=\MM$ and $O_n=t_0\in\MM$ for every $n$ one uses the simpler and classical notation $\ell_p(\N,\MM;t_0)$ or simply $\ell_p(\MM)$. It is clear that if the spaces $\MM_n$ are metric linear spaces then the $\ell_p$-sum is also a linear metric space for the canonical operations. It is also clear that if the metrics are translation invariant with respect to some group structure the spaces may carry then the metrics $\Delta_p$ are also translation invariant for the canonical group structure.
Basic examples of $\ell_p$-sum of metric spaces are the classical Lebesgue sequence spaces $\ell_p$ of $p$-summable real sequences. Recall that for $p\ge 1$, its natural distance is induced by the classical $\ell_p$-norm $\Vert x\Vert_{p}=\left(\ds\sum_{n=1}^\infty \vert x_n\vert^p\right)^{1/p}.$ For values of $p$ on the other side of the spectrum, $0<p\le 1$ its standard distance is given by 
 $$d_{p}(x,y)=\sum_{n=1}^\infty \vert x_n-y_n\vert^p.$$
 
In both cases $\ell_p$ can be seen as the $\ell_p$-sum of countably many copies of the metric space $(\R,\vert\cdot\vert)$, endowed with the ad-hoc metric $\Delta_p$, i.e. $\ell_p(\N,\R)$. In the sequel $\ell_p$ for $0<p<\infty$ shall always be considered as a metric space with the metric $d_{p}(x,y)$ if $0<p\le 1$ or with the metric $\Vert x-y\Vert_{p}$ if $p\ge 1$. Note that for $p=1$ both metrics coincide and there is no confusion possible.

\medskip

\paragraph{\textbf{Compression and expansion moduli}}\ \\
Let $(\D,d_\D)$ and $(\T,d_\T)$ be two metric spaces and $f:\D\to \T$. One defines 
$$\rho_f(t)=\inf\{d_\T(f(x),f(y)) : d_\D(x,y)\geq t\},$$
and 
$$\omega_f(t)={\rm sup}\{d_\T(f(x),f(y)) : d_\D(x,y)\leq t\}.$$

\noindent Remark that for every $x,y\in \D$, $$\rho_f(d_\D(x,y))\le d_\T(f(x),f(y))\le\omega_f(d_\D(x,y)).$$ The moduli $\rho_f$ and $\omega_f$ will  be respectively called the \textit{compression modulus} and the \textit{expansion modulus} of the embedding. 

\smallskip

When the metric spaces are not uniformly discrete the embedding is said to be a \textit{uniform embedding} if $\omega_f(t)\to 0$ when $t\to 0$ and $\rho_f(t)>0$ for all $t>0$.

\smallskip

If the domain and the target spaces are two unbounded metric spaces then $f:\D\to \T$ is a \textit{coarse embedding} if  $\rho_f(t)\to \infty$ when $t\to \infty$ and $\omega_f(t)<\infty$ for all $t>0$. 

\smallskip

A \textit{strong embedding} is an embedding which is \textit{simultaneously} coarse and uniform.

\smallskip

For coarsely continuous maps (in particular coarse embeddings) on metrically convex spaces, the expansion modulus is subadditive. In this case for every $t>s>0$, $\omega_f(t)\le \frac{2\omega_f(s)}{s}t$ and the expansion modulus is at most linear for large distances, i.e. for some $a,\tau>0$, $\omega_f(t)\le at$ for all $t\ge \tau$. The expansion modulus of a coarse embedding can always be assumed to be at most Lipschitz if the domain space is a uniformly discrete graph or a metrically convex space with the metric $d_\D^{\ge \tau}(x,y)=d_\D(x,y)+\tau$, $x\neq y$.
Regarding uniform embeddings the situation is not so clear. However the compression modulus of a uniform embedding into a Hilbert space can be taken to be Lipschitz for small distances if the domain space is an Abelian (see \cite{BenyaminiLindenstrauss2000} p. 192).
\subsection{Deformation Gaps}
A variant of the terminology from \cite{GoulnaraDrutuSapir2009} shall be used to describe the quality of an embedding. A $[\rho,\omega]$-embedding $f$ from $\D$ into $\T$ is an embedding such that $$\rho(d_\D(x,y))\le d_\T(f(x),f(y))\le\omega(d_\D(x,y)).$$
 A \textit{coarse deformation gap} of $\D$ in $\T$ is given by a pair of functions $[\rho,\omega]$ such that there exists a $[\rho,\omega]$-coarse embedding, i.e. a $[\rho,\omega]$-embedding satisfying $\rho(t)\to \infty$ when $t\to \infty$ and $\omega(t)<\infty$, $\forall t>0$. 
One shall use the convenient notation $(t^\alpha,t^\beta)$ if there is a coarse (resp. uniform) compression gap $[t^r,t^s]$ for every $r<\alpha$ (resp. $r>\alpha$) and every $s>\beta$ (resp. $s<\beta$). The quotient $\omega/\rho$ is called the the \textit{deformation ratio}. Obviously two embeddings can have the same deformation ratio but different deformation gaps. 

\medskip 
 
For two functions $g,h\colon \R\to\R$ we write $g\ll h$ if there exist $a,b,c>0$ such that $g(t)\le ah(bt)+c$ for every $t\in\R$. If $g\ll h$ and $h\ll g$ then we write $g\asymp h$. It is an equivalence relation and a function shall be identified with its equivalence class in the sequel. Following \cite{GoulnaraDrutuSapir2009} one says that $[\rho_1,\rho_2]$ is a \textit{compression gap} of $\D$ in $\T$ if there exists a $[\rho_1,t]$-coarse embedding of $\D$ into $\T$ and for every $[\rho,t]$-coarse embedding of $\D$ into $\T$ one has $\rho\ll\rho_2$. If $\rho_1\asymp\rho_2$, $\rho_1$ is called the \textit{compression function} of $\D$ in $\T$.

\medskip

In particular if $\alpha_\T(\D)$ is the supremum of all numbers $\alpha$ such that $t^\alpha\ll \rho$ and $\rho$ is the compression function of $\D$ in $\T$ then $\alpha_\T(\D)$ is the \textit{compression exponent} of $\D$ in $\T$ ($\T$-compression of $\D$ in short) introduced by Guentner and Kaminker \cite{GuentnerKaminker2004}. In other words, $\alpha_\T(\D)$ is the supremum of all numbers $0\le \alpha\le 1$ so that if $d_\D(x,y)$ is large enough, $$d_\D(x,y)^{\alpha}\lesssim d_\T(f(x),f(y))\lesssim d_\D(x,y).$$

Analogous notions for uniform or strong embeddings can be defined in a similar fashion, e.g. \textit{uniform deformation gaps}, \textit{strong deformation gaps}, etc... For instance $[\rho,\omega]$ will be a strong deformation gap of $\D$ in $\T$ if there exists a $[\rho,\omega]$-embedding with $\displaystyle\lim_{t\to \infty}\rho(t)=\infty$, $\ds\lim_{t\to 0}\omega(t)=0$, $\rho(t)>0$ and $\omega(t)<\infty$ for all $t>0$. By changing accordingly the equivalence relation it is possible to define \textit{expansion gaps}, \textit{expansion functions} and \textit{expansion exponents}. More precisely, the $\T$-expansion of $\D$, denoted $\beta_\T(\D)$ is the supremum of all numbers $0<\beta\le 1$ such that $$d_\D(x,y)\lesssim d_\T(f(x),f(y))\lesssim d_\D(x,y)^{\beta}$$ as long as $d_\D(x,y)$ is small enough.

\medskip 

To illustrate this new terminology, theorem 2.1 from \cite{Kalton2007} says that if $\MM$ is a stable metric space then it is possible to construct a reflexive space $R$ and a strong embedding from $\MM$ into $R$ with coarse compression gap $(t,t]$ and uniform deformation gap $[t,t)$. 
Another example is theorem 2.1 in \cite{Baudier2012}. Every proper metric space embeds with strong deformation gap $[\frac{t}{\log(t)^2},t]$ into any Banach space without finite cotype.

\smallskip

In geometric group theory it is customary to call a quasi-isometric embedding an embedding $f$ which is bi-large-scale Lipschitz, i.e. such that for all $x,y\in \D$ the inequalities 
$$\frac{1}{A}d_\D(x,y)-B\le d_\T(f(x),f(y))\le Ad_\D(x,y)+B$$ hold for some constants $A\ge1$ and $B\ge 0$. 

\medskip

It is clear that if $\D$ is uniformly discrete for some scale $\epsilon>0$ then the embedding is bi-Lipschitz for large distances, i.e. there exists $\tau>0$ and $A_\tau>0$ such that \begin{equation}\label{LLD}\frac{1}{A_\tau}d_\D(x,y)\le d_\T(f(x),f(y))\le A_\tau d_\D(x,y)
\end{equation}
whenever $d_\D(x,y)\ge\tau$. Any $\tau>\max\{AB,\epsilon\}$ and $A_\tau\ge\max\left\{\frac{\epsilon A+B}{\epsilon},\frac{\tau A}{\tau-AB}\right\}$ will do the job.

If the domain space is a Banach space $X$ then a bi-Lipschitz embedding for large distances is also a quasi-isometric embedding. Assume that equality (\ref{LLD}) holds and let $\Vert x-y\Vert_X<\tau$. Pick $z\in X$ such that $\Vert y-z\Vert_X=\tau$, $\tau\le\Vert x-z\Vert_X\le2\tau$ and $\Vert x-z\Vert_X=\Vert x-y\Vert_X+\Vert y-z\Vert_X$. By the triangle inequality $d_\T(f(x),f(y))\le A_\tau\Vert x-y\Vert_X+\tau(A_\tau+1)$. 
On the other hand, 
\begin{align*}
d_\T(f(x),f(y))& \ge d_\T(f(x),f(z))-d_\T(f(y),f(z))\\
 & \ge \frac{1}{A_\tau}\Vert x-y\Vert_X-A_\tau\Vert y-z\Vert_X\\
 & \ge \frac{1}{A_\tau}\Vert x-y\Vert_X-\tau A_\tau\\
\end{align*}
When the domain space is a Banach space an embedding is quasi-isometric if and only if it is bi-Lipschitz for large distances. This terminology shall be preferred in this paper since in nonlinear Banach space theory a quasi-isometry means a bi-Lipschitz embedding with distortion $1+\epsilon$ for every $\epsilon>0$. The terminology \textit{coarse-Lipschitz embedding} could be used as well (cf \cite{Kalton2008}).

\smallskip

It is also clear that if for some scale $\tau>0$ there is a bi-Lipschitz embedding for large distance $f$ from $X$ into $\T$, the mapping $g(x)=f(\frac{\tau}{\tau'}x)$ is a bi-Lipschitz embedding for large distance at scale $\tau'$. In addition if the target space is a Banach space $Y$ the expansion modulus can be taken to be $1$-Lipschitz after an appropriate rescaling. For two Banach spaces $X$ and $Y$ it follows from the preceeding discussion that the $Y$-compression of $X$  is the supremum of all numbers $0\le \alpha\le 1$ over all embeddings such that $$\Vert x-y\Vert_{X}^{\alpha}\lesssim \Vert f(x)-f(y)\Vert_Y \le \Vert x-y\Vert_X\textrm{ whenever } \Vert x-y\Vert\ge 1.$$ 

Another convenient notation shall be used repeatedly in this paper. For a $[\rho,\omega]$-embedding $f$ from $\D$ into $\T$, $$\rho(d_\D(x,y))\lesssim_l d_\T(f(x),f(y))\lesssim\omega(d_\D(x,y))$$ shall mean that the inequality $d_\T(f(x),f(y))\lesssim\omega(d_\D(x,y))$ holds for every $x,y\in \D$ and $\rho(d_\D(x,y))\lesssim d_\T(f(x),f(y))$ is satisfied only if $d_\D(x,y)\ge \tau$ for some scale $\tau>0$.\\Similarly, the notation $\lesssim_s$ shall be understood for an inequality satisfied for small distances only.

\section{Quantitative nonlinear embeddings into Lebesgue spaces}
The purpose of this section is to study strong and coarse deformation gaps of nonlinear embeddings into the classical Lebesgue sequence spaces. 

\subsection{Strong embeddings into $\ell_q$-sums of metric linear spaces}\ \\

\smallskip

In \cite{Kraus} Kraus remarked that the Dadarlat-Guentner criterion from \cite{DadarlatGuentner2003} can be used to produce strong embeddings when the domain space is a Hilbert space. Lemma \ref{stronggluing} is a quantitative version of this criterion suited for strong embeddings. It is written in greater generality since it does not add too much technicality and can take care of $\ell_p$-target spaces in the zone $0<p<1$. It will be the main tool for building strong embeddings into $\ell_q$-sums of metric linear spaces and will provide lower estimates for the parameter $\alpha_{\ell_q}(\ell_2)$. Already known estimates for $q>2$ are retrieved and the first such estimates in the case $0<q<2$ are obtained. Rather surprisingly the estimates for the range $1\le q<2$  are optimal. In addition the coarse embedding that can be derived from Dadarlat-Guentner criterion will still be presented at the end of this section. Despite it gives weaker estimates on the $\ell_q$-compression for separable Hilbert spaces, it seems to be a more flexible embedding and can be used in broader contexts. An occurrence of this is shown in Section 3. 

\begin{lemma}\label{stronggluing} Let $(\MM,d)$ be a metric space. Suppose that for every $n\ge 1$, $(X_n,\delta_n)$ is a translation invariant metric linear space and suppose further that there exist maps $\varphi_n\colon \MM\to X_n$ with compression modulus $\rho_n$ and expansion modulus $\omega_n$ satisfying 
\begin{enumerate}
\item $\omega_n(t)\le \epsilon_n \gamma(t)<\infty, \forall t>0,$ for some $q$-summable sequence $(\epsilon_n)_{n\ge 1}$ and function $\gamma$\\
\item $\delta_n(\varphi_n(x),\varphi_n(y))\ge \eta$ whenever $d(x,y)\ge s_n$, for some $\eta>0$ and nondecreasing unbounded sequence $(s_n)_{n\ge1}$\\
\item $\rho_n(t)\ge \mu_n \xi(t)$ whenever $t$ is small enough, for some $(\mu_n)_{n\ge 1}$ $q$-summable and function $\xi$.
\end{enumerate}

\medskip

Define $\phi\colon\MM\to \left(\ds\sum_{n=1}^\infty X_n\right)_{q}$ by $\phi(x)=(\varphi_n(x)-\varphi_n(t_0))_{n\ge 1}$. The following inequalities hold:

\medskip

\begin{itemize}
\item if $q\ge 1$
\begin{equation}
\Delta_q(\phi(x),\phi(y))\lesssim \gamma(d(x,y)).
\end{equation} 

\begin{equation}
\Delta_p(\phi(x),\phi(y))\gtrsim k^{1/q}  \textrm{ whenever }s_k\le d(x,y)\le s_{k+1}
\end{equation}

\begin{equation}
\Delta_q(\phi(x),\phi(y))\gtrsim \xi(d(x,y)) \textrm{ whenever } d(x,y) \textrm{ is small enough}.
\end{equation}

\bigskip

\item if $0<q\le 1$ 
\begin{equation}
\Delta_q(\phi(x),\phi(y))\lesssim \gamma(d(x,y))^{q},
\end{equation} 

\begin{equation}
\Delta_q(\phi(x),\phi(y))\gtrsim k \textrm{ whenever }s_k\le d(x,y)\le s_{k+1}
\end{equation}

\begin{equation}
\Delta_q(\phi(x),\phi(y))\gtrsim \xi(d(x,y))^{q} \textrm{ whenever } d(x,y) \textrm{ is small enough}.
\end{equation}
\end{itemize}
\end{lemma}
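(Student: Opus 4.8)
The plan is to reduce all six inequalities to one elementary observation together with coordinatewise estimates, treating the two normalisations of $\Delta_q$ in parallel. First I would check that $\phi$ is well defined, i.e.\ that $\phi(x)\in\left(\sum_{n=1}^\infty X_n\right)_q$ for each $x\in\MM$: since $\delta_n$ is translation invariant, $\delta_n\left(\varphi_n(x)-\varphi_n(t_0),0\right)=\delta_n(\varphi_n(x),\varphi_n(t_0))\le\omega_n(d(x,t_0))\le\epsilon_n\gamma(d(x,t_0))$ by hypothesis (1), and $q$-summability of $(\epsilon_n)$ makes $|\phi(x)|_q<\infty$. The same invariance yields the crucial identity
$$\delta_n(\phi(x)_n,\phi(y)_n)=\delta_n(\varphi_n(x),\varphi_n(y))\qquad(n\ge1,\ x,y\in\MM),$$
obtained by translating the $n$-th coordinate by $\varphi_n(t_0)$, so that $\Delta_q(\phi(x),\phi(y))$ is precisely the $\ell_q$-aggregate of the scalar sequence $\left(\delta_n(\varphi_n(x),\varphi_n(y))\right)_{n\ge1}$ — with the outer exponent $1/q$ when $q\ge1$, and without it (a plain sum of $q$-th powers) when $0<q\le1$. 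Every displayed inequality then follows by bounding this sequence term by term and summing.

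For the upper bounds one uses $\delta_n(\varphi_n(x),\varphi_n(y))\le\omega_n(d(x,y))\le\epsilon_n\gamma(d(x,y))$ for all $n$, so that $\sum_n\delta_n(\varphi_n(x),\varphi_n(y))^q\le\left(\sum_n\epsilon_n^q\right)\gamma(d(x,y))^q$; this is (2.4) directly, and (2.1) after taking $1/q$-th powers, the finite constant $\left(\sum_n\epsilon_n^q\right)^{1/q}$ being absorbed into $\lesssim$. For the lower bounds at large distance: if $s_k\le d(x,y)$ then $d(x,y)\ge s_n$ for every $n\le k$ because $(s_n)$ is nondecreasing, whence hypothesis (2) forces $\delta_n(\varphi_n(x),\varphi_n(y))\ge\eta$ for $n=1,\dots,k$; discarding the remaining coordinates leaves $\sum_n\delta_n(\varphi_n(x),\varphi_n(y))^q\ge k\eta^q$, which is (2.5), and gives $\Delta_q(\phi(x),\phi(y))\ge\eta\,k^{1/q}$, i.e.\ (2.2), when $q\ge1$ (the constraint $d(x,y)\le s_{k+1}$ is not needed here, only recorded to tie the index $k$ to the distance). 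For the lower bounds at small distance: by definition of $\rho_n$ and hypothesis (3), once $d(x,y)$ is small enough one has $\delta_n(\varphi_n(x),\varphi_n(y))\ge\rho_n(d(x,y))\ge\mu_n\xi(d(x,y))$; keeping a single coordinate with $\mu_n>0$ (or, if a common threshold is available, summing all of them as in the upper bound) yields $\Delta_q(\phi(x),\phi(y))\gtrsim\xi(d(x,y))^q$ for $0<q\le1$, i.e.\ (2.6), and $\gtrsim\xi(d(x,y))$ for $q\ge1$, i.e.\ (2.3).

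I do not expect a genuine obstacle: the real content is the cancellation of the $\varphi_n(t_0)$ terms via translation invariance, and everything else is bookkeeping. The points needing care are keeping the two conventions for $\Delta_q$ straight — in particular that for $0<q\le1$ the distance is literally $\sum_n\delta_n^q$ with no root, which is exactly why the right-hand sides of (2.4)–(2.6) carry the extra exponent $q$ — and, in (2.3) and (2.6), asserting the estimate only in the range of distances where hypothesis (3) is in force, consistent with the $\lesssim_s$ / ``small enough'' regime.
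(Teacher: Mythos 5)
Your proposal is correct and follows essentially the same route as the paper: translation invariance cancels the $\varphi_n(t_0)$ shifts, and each inequality comes from coordinatewise bounds ($\delta_n\le\epsilon_n\gamma$, the first $k$ coordinates $\ge\eta$ when $d(x,y)\ge s_k$, and $\delta_n\ge\mu_n\xi$ for small distances) summed in $\ell_q$, with the two normalisations of $\Delta_q$ handled separately. Your remark that one may retain a single coordinate for the small-distance bound (versus the paper summing $\sum_n\mu_n^q$ under an implicit uniform threshold) is only a cosmetic variation, and arguably a slightly more careful one.
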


\begin{proof}
First of all, $\phi$ is well defined, indeed

\begin{align*}
\vert \phi(x)\vert_q &=\sum_{n=1}^{\infty}\delta_n(\varphi_n(x)-\varphi_n(t_0),0)^q\\
&=\sum_{n=1}^{\infty}\delta_n(\varphi_n(x),\varphi_n(t_0))^q\\
&\le \sum_{n=1}^{\infty}\epsilon_n^q\gamma(d(x,t_0))^q\\
&\le \gamma(d(x,t_0))^q\sum_{n=1}^{\infty}\epsilon_n^q<\infty
\end{align*}

For $0<q<\infty$,

\begin{align*}
\sum_{n=1}^{\infty}\delta_n(\varphi_n(x)-\varphi_n(t_0),\varphi_n(y)-\varphi_n(t_0))^q&=\sum_{n=1}^{\infty}\delta_n(\varphi_n(x),\varphi_n(y))^q\\ 
&\le \sum_{n=1}^{\infty}\epsilon_n^q\gamma(d(x,y))^{p}\\
&\le \gamma(d(x,y))^{q}\sum_{n=1}^{\infty}\epsilon_n^q
\end{align*}

Now if $s_{k}\le d(x,y)<s_{k+1}$ for some $k$ then 
\begin{align*}
\sum_{n=1}^{\infty}\delta_n(\varphi_n(x),\varphi_n(y))^q&\ge\sum_{n=1}^{k}\delta_n(\varphi_n(x),\varphi_n(y))^q\\
&\ge k\eta^q\\
\end{align*}

if $d(x,y)$ is small enough then 
\begin{align*}
\sum_{n=1}^{\infty}\delta_n(\varphi_n(x),\varphi_n(y))^q&\ge\sum_{n=1}^{\infty}\mu_n^q \xi(d(x,y))^{q}\\
&\ge\left(\sum_{n=1}^{\infty}\mu_n^q\right) \xi(d(x,y))^{q}\\
\end{align*}
It remains to remark that if $0<q\le 1$ then $$\Delta_p(\phi(x),\phi(y))=\sum_{n=1}^{\infty}\delta_n(\varphi_n(x)-\varphi_n(t_0),\varphi_n(y)-\varphi_n(t_0))^q$$ and if $1\le q<\infty$, $$\Delta_q(\phi(x),\phi(y))=\left(\sum_{n=1}^{\infty}\delta_n(\varphi_n(x)-\varphi_n(t_0),\varphi_n(y)-\varphi_n(t_0))^q\right)^{1/q}.$$

\end{proof}

The quality of the expansion modulus is reflected in the behavior of the function $\gamma$ and estimates on the compression modulus are encode by the growth rate of the sequence $(s_n)_{n\ge 1}$ for large distances and the behavior of the function $\xi$ for small distances.
\begin{footnote}{The latter fact was already noticed in \cite{Nowak2007}.}
\end{footnote}
The maps $\varphi_n$ will be referred to as the \textit{fundamental maps} in the sequel. Denote $s^-$ the generalized inverse function of the canonical continuous function obtained by piecewise extension of $s(n)$.
  
\begin{proposition}\label{Bembeddings}
Let $q\ge 1$. Assume that $0<\xi(t)\le \gamma(t)<\infty$ for all $t>0$ and $\lim_{t\to0}\gamma(t)=0$. Then $\phi$ is a strong embedding from $\MM$ into $\ds\left(\sum_{n=1}^{\infty} X_n\right)_{q}$ so that $$\xi(d(x,y))\lesssim_s \Delta_q(\phi(x),\phi(y))\lesssim \gamma(d(x,y))$$ and 
$$s^-(d(x,y))^{1/q}\lesssim_l \Delta_q(\phi(x),\phi(y))\lesssim \gamma(d(x,y)).$$
\end{proposition}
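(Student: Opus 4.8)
The plan is to derive Proposition~\ref{Bembeddings} directly from Lemma~\ref{stronggluing} by verifying its three hypotheses and then reading off the conclusion. First I would set up the fundamental maps: take a single base map $\psi\colon\MM\to Z$ into some translation invariant metric linear space $Z$ (in the application $Z$ will be a Hilbert space) with compression modulus comparable to $\xi$ and expansion modulus comparable to $\gamma$, and let $\varphi_n=\lambda_n\psi$ be rescaled copies, where the scalars $\lambda_n$ are chosen so that $(\lambda_n)_{n\ge1}$ is $q$-summable. Since $Z$ is a metric \emph{linear} space with translation invariant metric, scaling by $\lambda_n$ multiplies both moduli by a controlled amount; this yields $\omega_n(t)\le\epsilon_n\gamma(t)$ with $\epsilon_n=\lambda_n$ (hypothesis (1)) and $\rho_n(t)\ge\mu_n\xi(t)$ with $\mu_n=\lambda_n$ for small $t$ (hypothesis (3)). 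For hypothesis (2) I would use that $\rho_n(t)\ge\mu_n\xi(t)$ actually holds on a fixed small interval $(0,t_0]$, and more importantly that by rescaling the \emph{domain} (or by the unboundedness of the moduli of a genuine embedding) one can arrange that for each $n$ there is a threshold $s_n\uparrow\infty$ with $\delta_n(\varphi_n(x),\varphi_n(y))\ge\eta$ as soon as $d(x,y)\ge s_n$; the sequence $(s_n)$ is exactly the one whose generalized inverse is $s^-$.

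Once the three hypotheses are in place, Lemma~\ref{stronggluing} applies in the case $q\ge1$ and gives the three displayed inequalities (2.2)--(2.4). I would then translate them into the statement of the proposition. The upper bound $\Delta_q(\phi(x),\phi(y))\lesssim\gamma(d(x,y))$ is (2.2) verbatim. The small-distance lower bound $\xi(d(x,y))\lesssim_s\Delta_q(\phi(x),\phi(y))$ is (2.4), using the $\lesssim_s$ convention introduced in the notation section. For the large-distance lower bound, (2.3) says $\Delta_q(\phi(x),\phi(y))\gtrsim k^{1/q}$ whenever $s_k\le d(x,y)\le s_{k+1}$; on that range one has $k\ge s^-(d(x,y))-1$ (up to the conventions on the generalized inverse of the piecewise-linear extension of $s$), so $k^{1/q}\gtrsim s^-(d(x,y))^{1/q}$ for $d(x,y)$ large, which is precisely $s^-(d(x,y))^{1/q}\lesssim_l\Delta_q(\phi(x),\phi(y))$.

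Finally I would check that $\phi$ is genuinely a \emph{strong} embedding, i.e.\ simultaneously coarse and uniform, using the definitions from Section~\ref{notation}. Coarseness: $\rho_\phi(t)\to\infty$ as $t\to\infty$ because $s^-(t)\to\infty$ (as $s_n\uparrow\infty$), and $\omega_\phi(t)\le\gamma(t)<\infty$ for all $t>0$ by hypothesis. Uniformity: $\omega_\phi(t)\le\gamma(t)\to0$ as $t\to0$ by the assumption $\lim_{t\to0}\gamma(t)=0$, and $\rho_\phi(t)>0$ for every $t>0$ since for small $t$ it is bounded below by $\xi(t)>0$ and for $t$ bounded away from $0$ one uses hypothesis (2) (some $\varphi_n$ already separates the pair by $\eta$). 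The assumption $\xi\le\gamma$ is what makes both regimes consistent. I expect the only genuinely delicate point to be the bookkeeping that produces the threshold sequence $(s_n)$ and its generalized inverse $s^-$ in hypothesis (2): one must be careful that rescaling the fundamental maps to make $(\epsilon_n)$ and $(\mu_n)$ summable does not destroy the uniform lower gap $\eta$, which is why the separation constant $\eta$ in Lemma~\ref{stronggluing} is stated as an absolute constant rather than an $n$-dependent quantity, and the $s_n$ are allowed to grow to compensate.
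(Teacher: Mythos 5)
Your second and third paragraphs are in substance the paper's own proof. Proposition \ref{Bembeddings} is stated inside the standing hypotheses of Lemma \ref{stronggluing}: the maps $\varphi_n$, the sequences $(\epsilon_n)$, $(\mu_n)$, $(s_n)$, the constant $\eta$ and the map $\phi$ are all given, so one simply invokes the three displayed inequalities of the lemma for $q\ge 1$, and for the large-distance bound uses $k\ge s^-(d(x,y))-1\gtrsim s^-(d(x,y))$ once $d(x,y)$ is large, exactly as you do. The added assumptions $0<\xi(t)\le\gamma(t)<\infty$ and $\gamma(t)\to 0$ as $t\to 0$ then yield the strong embedding property the way you describe (coarseness from $s^-(t)\to\infty$ and $\omega_\phi\le\gamma<\infty$; uniformity from $\omega_\phi(t)\le\gamma(t)\to 0$, from $\xi>0$ at small scales and from the $\eta$-separation at larger scales). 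That part is correct and matches the paper, which in fact writes out less of the verification than you do.

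The first paragraph, however, is both unnecessary and, as formulated, unsound. The proposition does not ask you to manufacture the fundamental maps; their construction is deferred to the applications (Lemma \ref{schoenberg} and Lemma \ref{Hmappings}), where the $\varphi_n$ are uniformly bounded sphere-valued maps obtained by varying a kernel parameter $r_n$, not rescalings of a single map. Your recipe $\varphi_n=\lambda_n\psi$ with $(\lambda_n)$ $q$-summable cannot in general satisfy hypothesis (2) of Lemma \ref{stronggluing}: to keep a fixed gap $\eta>0$ you would need $\rho_\psi(s_n)\gtrsim\eta/\lambda_n\to\infty$, i.e.\ the single map $\psi$ would already have to be a coarse embedding into one space $Z$ --- circular for the intended use --- and if $\psi$ is bounded (as the actual fundamental maps are), multiplying by $\lambda_n\to 0$ forces $\delta_n(\varphi_n(x),\varphi_n(y))\to 0$ uniformly, destroying $\eta$; this is precisely the difficulty you flag in your last sentence but do not resolve. (Also, in a general translation-invariant metric linear space scalar multiplication need not rescale the metric homogeneously, so "$\epsilon_n=\lambda_n$" requires justification.) Since this setup paragraph can simply be deleted --- the hypotheses of Lemma \ref{stronggluing} are assumed, not to be re-derived --- the remainder of your argument does prove the proposition, but as written the proof opens with a step that would fail.
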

\begin{proof}
It follows from Lemma \ref{stronggluing} that $$\Delta_q(\phi(x),\phi(y))\le \gamma(d(x,y)) \textrm{ for all } x,y\in \MM$$ and
$$\Delta_q(\phi(x),\phi(y))\ge \xi(d(x,y)) \textrm{ whenever } d(x,y) \textrm{ is small enough}$$ 
On the other hand,
$$\Delta_q(\phi(x),\phi(y))\ge \eta k^{1/q} \textrm{ whenever }s(k)\le d(x,y)\le s(k+1),$$
but $k\ge s^-(d(x,y))-1\gtrsim s^-(d(x,y))$ whenever $d(x,y)$ is large enough.
\end{proof}

\medskip

One singles out the following two typical and important regimes. If $(s_n)_{n\ge 1}$ grows at most exponentially, $i.e.\ s_n\lesssim 2^n$. Then $$\log(d(x,y))^{1/q}\lesssim_l \Delta_q(\phi(x),\phi(y))\lesssim \gamma(d(x,y)). $$
If $(s_n)_{n\ge 1}$ grows at most polynomially like $n^\xi$ for some $\xi>0$, i.e. $s_n\lesssim n^\xi$. Then $$d(x,y)^{1/(q\xi)}\lesssim_l \Delta_q(\phi(x),\phi(y))\lesssim \gamma(d(x,y)). $$

\medskip

The following proposition, whose proof is left to the reader, takes care of the $\ell_q$-sums with $0<q\le 1$.
\begin{proposition}\label{Fembeddings}
Let $0<q\le 1$. Under the same assumptions $\phi$ is a strong embedding from $\MM$ into $\ds\left(\sum_{n=1}^{\infty} X_n\right)_{q}$ so that $$\xi(d(x,y))^{q}\lesssim_s \Delta_q(\phi(x),\phi(y))\lesssim \gamma(d(x,y))^{q}$$ and 
$$s^-(d(x,y))\lesssim_l \Delta_q(\phi(x),\phi(y))\lesssim \gamma(d(x,y))^{q}.$$
\end{proposition}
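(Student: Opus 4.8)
The plan is to mimic the proof of Proposition \ref{Bembeddings} almost verbatim, making the bookkeeping adjustments forced by the fact that in the regime $0<q\le 1$ the metric on the $\ell_q$-sum is $\Delta_q(x,y)=\sum_n \delta_n(x_n,y_n)^q$ rather than its $q$-th root. Concretely, I would first invoke Lemma \ref{stronggluing} in the case $0<q\le 1$ to harvest the three displayed inequalities: $\Delta_q(\phi(x),\phi(y))\lesssim \gamma(d(x,y))^q$ for all $x,y$; $\Delta_q(\phi(x),\phi(y))\gtrsim k$ whenever $s_k\le d(x,y)\le s_{k+1}$; and $\Delta_q(\phi(x),\phi(y))\gtrsim \xi(d(x,y))^q$ whenever $d(x,y)$ is small enough. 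These are exactly the analogues of the inputs used in Proposition \ref{Bembeddings}, but without the $1/q$ exponents, since raising to the $1/q$ power is precisely what converts $\Delta_q$ into a genuine norm-type distance in the $q\ge 1$ case and is absent here.

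Next I would translate the large-distance lower bound from an index estimate into a distance estimate exactly as before: from $\Delta_q(\phi(x),\phi(y))\ge \eta^q k$ whenever $s(k)\le d(x,y)\le s(k+1)$, and the fact that $s(k)\le d(x,y)$ forces $k\ge s^-(d(x,y))-1$, one gets $k\gtrsim s^-(d(x,y))$ for $d(x,y)$ large, hence $\Delta_q(\phi(x),\phi(y))\gtrsim s^-(d(x,y))$ for $d(x,y)$ large. Combined with the universal upper bound this yields the second displayed chain of the proposition. For the small-distance statement, the bound $\Delta_q(\phi(x),\phi(y))\gtrsim \xi(d(x,y))^q$ near $0$ together with the upper bound $\Delta_q(\phi(x),\phi(y))\lesssim \gamma(d(x,y))^q$ gives the first chain. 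It then remains to check that $\phi$ is genuinely a strong embedding: since $\lim_{t\to 0}\gamma(t)=0$ we have $\omega_\phi(t)\to 0$ as $t\to 0$; since $0<\xi(t)\le\gamma(t)$ for all $t>0$ we have $\rho_\phi(t)>0$ for all $t>0$ (from the small-distance bound together with monotonicity/positivity considerations as in the $q\ge 1$ case); and since $s^-$ is unbounded (because $(s_n)$ is nondecreasing and unbounded by hypothesis (2) of Lemma \ref{stronggluing}), $\rho_\phi(t)\to\infty$ as $t\to\infty$. This is why the author can safely leave the proof "to the reader": every step is the obvious transcription of the $q\ge1$ argument with $(\cdot)^{1/q}$ deleted.

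The only mildly delicate point — and the one I'd expect to be the "main obstacle," though it is minor — is reconciling the small-distance lower bound $\Delta_q(\phi(x),\phi(y))\gtrsim \xi(d(x,y))^q$, which only holds for $d(x,y)$ small, with the requirement $\rho_\phi(t)>0$ for \emph{all} $t>0$ in the definition of uniform embedding. This is handled by observing that for $t$ bounded away from $0$, either $d(x,y)$ eventually exceeds some $s_k$ and the large-distance bound $\Delta_q\gtrsim k\ge 1$ kicks in, or for intermediate $t$ one uses monotonicity of $\rho_\phi$ together with positivity at small scales; in all cases $\rho_\phi(t)$ stays positive. One should also be slightly careful that the hypothesis "$\xi(t)\le\gamma(t)$ and $\lim_{t\to0}\gamma(t)=0$" is what guarantees the two moduli are compatible (so that $\phi$ is simultaneously coarse and uniform, i.e. strong), exactly as in Proposition \ref{Bembeddings}; no new idea is needed beyond what already appears there.
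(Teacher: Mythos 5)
Your proposal is correct and is exactly the argument the paper intends: the proof is left to the reader precisely because it is the verbatim transcription of the proof of Proposition \ref{Bembeddings}, using the $0<q\le 1$ inequalities of Lemma \ref{stronggluing} (without the $1/q$ exponents) and the same estimate $k\ge s^-(d(x,y))-1\gtrsim s^-(d(x,y))$ for large distances. Your additional remarks on verifying the strong-embedding conditions are consistent with how the $q\ge 1$ case is handled, so no discrepancy with the paper's approach arises.
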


\bigskip

\subsection{Strong deformation gaps for Hilbert spaces}
\subsubsection{Constructing the fundamental maps for Hilbert spaces}

When the domain space is a Hilbert space one is able to produce a sequence of fundamental maps using classical Hilbertian Theory. Two different ways for building the fundamental maps are given.

\begin{lemma}\label{schoenberg}
Let $H$ be a Hilbert space. There exists a Hilbert space $\mathcal{H}$ such that for each $r>0$ there exists a map $\psi_r\colon H\to \mathcal{H}$ such that $$\Vert\psi_r(x)-\psi_r(y)\Vert_{\mathcal{H}}=\sqrt{2\left(1-e^{-r\Vert x-y\Vert^2_H}\right)}$$\end{lemma}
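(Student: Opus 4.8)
The plan is to recognize this as the classical Schoenberg theorem on negative definite kernels, and to construct $\mathcal{H}$ by reproducing kernels, or more concretely via Gaussian random variables indexed by $H$. First I would recall that for a real Hilbert space $H$ the function $(x,y)\mapsto \Vert x-y\Vert_H^2$ is a kernel of \emph{negative type}: for all $x_1,\dots,x_m\in H$ and scalars $a_1,\dots,a_m$ with $\sum_i a_i=0$ one has $\sum_{i,j} a_i a_j \Vert x_i-x_j\Vert_H^2\le 0$ (this is immediate by expanding $\Vert x_i-x_j\Vert^2 = \Vert x_i\Vert^2 + \Vert x_j\Vert^2 - 2\langle x_i,x_j\rangle$ and using $\sum a_i=0$ to kill the first two terms, leaving $-2\Vert\sum a_i x_i\Vert^2\le 0$). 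By Schoenberg's theorem, for every $t>0$ the function $\exp(-t\Vert x-y\Vert_H^2)$ is then a kernel of \emph{positive type} on $H$; equivalently $1-\exp(-t\Vert x-y\Vert_H^2)$ is of negative type. Applying this with $t=r$, the symmetric kernel $K_r(x,y) := 2\bigl(1-e^{-r\Vert x-y\Vert_H^2}\bigr)$ is of negative type, vanishes on the diagonal, and is nonnegative, hence it is the square of a metric that embeds isometrically into Hilbert space.

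Concretely, I would make the embedding explicit. Set $\mathcal{H} = L_2(\Omega,\mathbb{P})$ where $(g_x)_{x\in H}$ is the centered Gaussian process with covariance $\E[g_x g_y] = e^{-r\Vert x-y\Vert_H^2}$ (this process exists precisely because $e^{-r\Vert x-y\Vert_H^2}$ is positive definite). Then define $\psi_r(x) := g_x \in \mathcal{H}$. A one-line computation gives
\begin{equation*}
\Vert\psi_r(x)-\psi_r(y)\Vert_{\mathcal{H}}^2 = \E[(g_x-g_y)^2] = \E[g_x^2] - 2\E[g_x g_y] + \E[g_y^2] = 2\bigl(1 - e^{-r\Vert x-y\Vert_H^2}\bigr),
\end{equation*}
which is exactly the claimed formula after taking square roots. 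Alternatively, one can avoid probability entirely and take $\mathcal{H}$ to be the Gaussian RKHS: realize $e^{-r\Vert x-y\Vert_H^2}$ as an honest inner product by observing that when $H=\R$ it equals $\langle k_x, k_y\rangle_{L_2(\R)}$ for suitable $L_2$ translates/modulations of a Gaussian, and then handle general $H$ by a tensor/limit argument over finite-dimensional subspaces; but the Gaussian-process route is cleanest and dimension-free, so that is the one I would write out.

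The only genuine content is Schoenberg's theorem (or, equivalently, the positive-definiteness of the Gaussian kernel on $H$), so I would either cite it or sketch its proof. A self-contained sketch: write $e^{-r\Vert x-y\Vert^2}$ as a (weak) limit/integral of the form $\int e^{i\langle \xi, x-y\rangle}\,d\nu_r(\xi)$ for the appropriately scaled Gaussian measure $\nu_r$ on $H$ (finite-dimensional Bochner-type statement, extended by consistency), and note that each $e^{i\langle\xi,x-y\rangle} = \langle e^{i\langle\xi,x\rangle}, e^{i\langle\xi,y\rangle}\rangle$ is trivially positive definite, hence so is the average. The main (and essentially only) obstacle is the usual care needed in the infinite-dimensional setting — existence of the Gaussian process indexed by all of $H$, or equivalently making sense of $\nu_r$ — which is handled by Kolmogorov's extension theorem once one checks consistency of the finite-dimensional marginals, a routine verification that I would not grind through here. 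Finally I would note that $\mathcal{H}$ does not depend on $r$ in any essential way: one may take a single $\mathcal{H}$, e.g. $\ell_2$ of the right density character, and realize all the $\psi_r$ inside it, which is what the statement asserts.
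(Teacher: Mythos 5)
Your proposal is correct and follows essentially the same route as the paper's second proof: the kernel $\Vert x-y\Vert_H^2$ is of negative type, Schoenberg's theorem gives positive definiteness of $e^{-r\Vert x-y\Vert_H^2}$, and a sphere-valued map realizing this kernel as an inner product yields the stated identity; your Gaussian-process model $\psi_r(x)=g_x$ in $L_2(\Omega,\mathbb{P})$ is simply a concrete realization of that Schoenberg embedding. (The paper also records an alternative, fully explicit construction via the exponential Hilbert space $Exp(H)$ and $\psi_r(x)=e^{-r\Vert x\Vert_H^2}E(\sqrt{2r}\,x)$, which gives $\langle\psi_r(x),\psi_r(y)\rangle=e^{-r\Vert x-y\Vert_H^2}$ without invoking Schoenberg and with a single $\mathcal{H}$ for all $r$.)
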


\begin{proof}\ \\
$\bullet$ First approach:
We define $Exp(H)=\R\oplus H\oplus (H\otimes H)\oplus\cdots\oplus H^{\otimes n}\oplus\cdots$\newline
Let $E\colon H\to Exp(H)$, defined by $$E(x)=1\oplus x\oplus (\frac{1}{\sqrt{2!}}x\otimes x)\oplus\cdots\oplus (\frac{1}{\sqrt{n!}}x^{\otimes n})\oplus\cdots$$

\noindent First of all $Exp(H)$ is a Hilbert space and $<E(x),E(y)>=e^{<x,y>}$, hence $\Vert E(x)\Vert_{Exp(H)}=e^{\frac{1}{2}\Vert x\Vert_H^2}$. Let $\psi_r\colon H\to\mathcal{H}=Exp(H)$ defined by $$\psi_r(x)=e^{-r\Vert x\Vert^2_H} E(\sqrt{2r}x).$$
We have that 
\begin{align*}
<\psi_r(x),\psi_r(y)>_{\mathcal{H}}&=<e^{-r\Vert x\Vert^2_H} E(\sqrt{2r}x),e^{-r\Vert y\Vert^2_H} E(\sqrt{2r}y)>_{Exp}\\
											 &=e^{-r(\Vert x\Vert^2_H+\Vert y\Vert^2_H)}< E(\sqrt{2r}x),E(\sqrt{2r}y)>_{Exp}\\
											 &=e^{-r(\Vert x\Vert^2_H+\Vert y\Vert^2_H)}e^{<\sqrt{2r}x,\sqrt{2r}y>_{H}}\\
											 &=e^{-r(\Vert x\Vert^2_H-2<x,y>_{H}+\Vert y\Vert^2_H)}\\
											 &=e^{-r\Vert x-y\Vert^2_H}\\
\end{align*}
Finally remark that $\Vert\psi_r(x)\Vert_\mathcal{H}=1$.

\medskip

\noindent$\bullet$ Second approach: $u(x,y)=\Vert x-y\Vert_H^2$ is a negative kernel on $H$ and therefore for every $r>0$, $w(x,y)=e^{-r\Vert x-y\Vert_H^2}$ is a positive definite kernel on $H$. A classical theorem of Schoenberg provides a Hilbert space $\mathcal{H}$ and a mapping 
$\psi_r\colon H\to S_{\mathcal{H}}$ such that $$<\psi_r(x),\psi_r(y)>_\mathcal{H}=e^{-r\Vert x-y\Vert_H^2}.$$ 

Therefore \begin{align*}
\Vert\psi_n(x)-\psi_n(y)\Vert^2_{\mathcal{H}}&=<\psi_n(x)-\psi_n(y),\psi_n(x)-\psi_n(y)>_{\mathcal{H}}\\
											 &=\Vert \psi_n(x)\Vert^2_{\mathcal{H}}+\Vert \psi_n(y)\Vert^2_{\mathcal{H}}-2<\psi_n(x),\psi_n(y)>_{\mathcal{H}}\\
											 &=2(1-e^{-r_n\Vert x-y\Vert^2_H})\\
\end{align*}					
\end{proof}

\subsubsection{Quantitative strong embeddings of Hilbert spaces into Lebesgue sequence spaces}\ \\
Since the fundamental maps take their values in the unit sphere of a Hilbert space, using the Mazur maps it is possible to transfer the embedding into the other Lebesgue sequence spaces. Recall the following Mazur map estimates whose proofs can be found in the Appendix.

\medskip

\noindent Let $M_{2,q}\colon \ell_2 \to \ell_q$ be the classical Mazur map. Then for every pair of points in the unit sphere of $\ell_2$ one has
\begin{enumerate}
\item $2\le q<\infty$, $$ \Vert x-y\Vert_2\lesssim \Vert \M_{2,q}(x)-M_{2,q}(y)\Vert_q\lesssim\Vert x-y\Vert_2^{2/q} $$
\item $1\le q<2<\infty$, $$ \Vert x-y\Vert_2^{2/q}\lesssim \Vert \M_{2,q}(x)-M_{2,q}(y)\Vert_q\lesssim\Vert x-y\Vert_2 $$
\item $0< q\le 1$, $$ \Vert x-y\Vert_2^{2}\lesssim d_q(M_{2,q}(x),M_{2,q}(y))\lesssim\Vert x-y\Vert_2^{q} $$
\end{enumerate}

Three types of inequalities regarding the fundamental maps are gathered in the next lemma and will be used to give quantitative estimates for the compression and expansion moduli of the strong embeddings which will be constructed out of Proposition \ref{Bembeddings} and Proposition \ref{Fembeddings}.

\begin{lemma}\label{Hmappings}
For every sequence $(r_n)_{n\ge 1}$ with $0\le r_n\le 1$, there exist maps $\varphi_n\colon \ell_2\to S_{\ell_q}$ such that 
\begin{itemize}
\item $\omega_n(t)\lesssim r_n^{\gamma_q} t^{2\gamma_q}$ for all $t\ge 0$\\
\item $\rho_n(t) \ge \delta_q>0$ whenever $t\ge \ds\frac{1}{\sqrt{r_n}}$\\
\item $\rho_n(t) \gtrsim r_n^{\xi_q} t^{2\xi_q}$ for all $t\le 1$\\
\end{itemize}
where $(\gamma_q,\xi_q)=\left\{\begin{array}{lcl}
                       (1/q,1/2) &\textrm{ if }&2\le q<\infty\\
                       (1/2,1/q) &\textrm{ if }&1\le q<2\\
                       (q/2,1) &\textrm{ if }&0< q<1
                            \end{array}\right.$
\end{lemma}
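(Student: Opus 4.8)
The plan is to compose the maps $\psi_{r_n}$ from Lemma~\ref{schoenberg} with the Mazur map $M_{2,q}$ and then read off the three moduli estimates from the Mazur map inequalities recalled just above. Concretely, for a fixed sequence $(r_n)_{n\ge1}$ with $0\le r_n\le 1$, set $\varphi_n=M_{2,q}\circ\psi_{r_n}\colon \ell_2\to S_{\ell_q}$; this is legitimate since $\psi_{r_n}$ lands in the unit sphere of a Hilbert space, which we may identify isometrically with the unit sphere of $\ell_2$, and $M_{2,q}$ maps the unit sphere of $\ell_2$ into the unit sphere of $\ell_q$. The key quantity to track is the auxiliary real-valued function $g_{r}(t)=\|\psi_{r}(x)-\psi_{r}(y)\|=\sqrt{2\bigl(1-e^{-r t^{2}}\bigr)}$ where $t=\|x-y\|_2$: note $g_r$ depends on $x,y$ only through $t$, so the compression and expansion moduli of $\psi_r$ are both exactly equal to $g_r$.

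The main step is then two elementary real-variable estimates for $g_r(t)=\sqrt{2(1-e^{-rt^2})}$ with $0\le r\le 1$. For the expansion side I would use $1-e^{-u}\le u$ to get $g_r(t)\le \sqrt{2}\,\sqrt{r}\,t$ for all $t\ge0$, i.e.\ $g_r(t)\lesssim \sqrt{r}\,t$. For the ``large distance'' compression I would observe that $g_r$ is nondecreasing in $t$, so whenever $t\ge 1/\sqrt{r}$ we have $rt^2\ge1$ and hence $g_r(t)\ge\sqrt{2(1-e^{-1})}=:c_0>0$, a fixed constant. For the ``small distance'' compression I would use $1-e^{-u}\ge (1-e^{-1})u$ for $0\le u\le1$: since $0\le r\le1$ and $t\le1$ give $rt^2\le1$, this yields $g_r(t)\ge\sqrt{2(1-e^{-1})}\,\sqrt{r}\,t\gtrsim \sqrt{r}\,t$ for $t\le1$. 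Thus, writing $\rho^\psi_n,\omega^\psi_n$ for the moduli of $\psi_{r_n}$, we get $\omega^\psi_n(t)\lesssim \sqrt{r_n}\,t$ for all $t$, $\rho^\psi_n(t)\ge c_0$ for $t\ge r_n^{-1/2}$, and $\rho^\psi_n(t)\gtrsim \sqrt{r_n}\,t$ for $t\le1$.

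It then remains to feed these into the Mazur map estimates, which control $\|M_{2,q}(u)-M_{2,q}(v)\|_q$ (or $d_q$ when $0<q\le1$) in terms of $\|u-v\|_2$ on the unit sphere, with exponents depending on the regime. Since $\|\psi_{r_n}(x)-\psi_{r_n}(y)\|_2=g_{r_n}(\|x-y\|_2)$ is always bounded above by $\sqrt2$, and an upper bound $g_{r_n}(t)\le a$ forces $\|\varphi_n(x)-\varphi_n(y)\|_q\le \|\varphi_n(x)-\varphi_n(y)\|_q$ controlled by $a^{\theta}$ for the appropriate exponent $\theta$, I would compose: in the expansion estimate use the Mazur upper bound with exponent ($2/q$ if $q\ge2$; $1$ if $1\le q<2$; $q$ if $0<q\le1$), applied to $a=g_{r_n}(t)\lesssim \sqrt{r_n}\,t$, which gives $\omega_n(t)\lesssim (\sqrt{r_n}\,t)^{\theta}=r_n^{\theta/2}t^{\theta}$; reading off $\gamma_q=\theta/2$ reproduces $(1/q,1/2,q/2)$ as claimed (so $\omega_n(t)\lesssim r_n^{\gamma_q}t^{2\gamma_q}$). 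For the small-distance compression, use the Mazur lower bound with exponent ($1$ if $q\ge2$; $2/q$ if $1\le q<2$; $2$ if $0<q\le1$) applied to the lower bound $g_{r_n}(t)\gtrsim \sqrt{r_n}\,t$ valid for $t\le1$ (and noting $\sqrt{r_n}t\le1$ there, so the Mazur lower bound is applicable with its stated range), obtaining $\rho_n(t)\gtrsim (\sqrt{r_n}t)^{2\xi_q}=r_n^{\xi_q}t^{2\xi_q}$ for $t\le1$. For the $\eta$-gap, apply the Mazur lower bound to the fixed constant $g_{r_n}(t)\ge c_0$ when $t\ge r_n^{-1/2}$, which gives $\rho_n(t)\ge \delta_q>0$ for a constant $\delta_q$ depending only on $q$.

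The only mild subtlety — and the step I would be most careful about — is bookkeeping the direction of the Mazur inequalities and the ranges on which they apply: the three regimes swap which side is the ``good'' (linear) side, so one must match the upper Mazur estimate to the expansion modulus and the lower Mazur estimate to the compression modulus in each regime, and check that the arguments fed to the lower Mazur estimate stay in the region where that estimate is valid (here $t\le1$ and $r_n\le1$ ensure $g_{r_n}(t)\le\sqrt2$, and for the small-distance part $\sqrt{r_n}\,t\le1$). Everything else is the two elementary inequalities $1-e^{-u}\le u$ and $1-e^{-u}\ge(1-e^{-1})u$ on $[0,1]$ together with monotonicity of $g_r$; no deep input beyond Lemma~\ref{schoenberg} and the quoted Mazur map estimates is needed.
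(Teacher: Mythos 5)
Your proposal is correct and follows essentially the same route as the paper: compose the Schoenberg-type maps $\psi_{r_n}$ of Lemma~\ref{schoenberg} with the Mazur map $M_{2,q}$, bound $1-e^{-r_nt^2}$ above by $r_nt^2$ and below by a constant multiple of $r_nt^2$ on $[0,1]$, and use the threshold $r_nt^2\ge 1$ for the $\eta$-gap, feeding these into the quoted Mazur estimates exactly as the paper does. Your extra caution about the ``stated range'' of the lower Mazur bound is unnecessary (those estimates hold for all pairs on the unit sphere) but harmless.
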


\bigskip

\begin{proof}
Let $\psi_n$ be the fundamental maps from Lemma \ref{schoenberg} and define maps $\varphi_n\colon \ell_2\to S_{\ell_q}$ by $\varphi_n(x)=M_{2,q}(\psi_n(x))$. According to the Mazur map estimates one has,

\begin{enumerate}
\item $2\le q<\infty$, we have $$\left(1-e^{-r_n\Vert x-y\Vert^2_2}\right)^{1/2}\lesssim \Vert \varphi_n(x)-\varphi_n(y)\Vert_q\lesssim\left(1-e^{-r_n\Vert x-y\Vert^2_2}\right)^{1/q} $$
\item $1\le q<2<\infty$, $$\left(1-e^{-r_n\Vert x-y\Vert^2_2}\right)^{1/q}\lesssim \Vert \varphi_n(x)-\varphi_n(y)\Vert_q\lesssim\left(1-e^{-r_n\Vert x-y\Vert^2_2}\right)^{1/2}$$
\item $0< q\le 1$, $$1-e^{-r_n\Vert x-y\Vert^2_2}\lesssim d_q(M_{2,q}(x),M_{2,q}(y))\lesssim\left(1-e^{-r_n\Vert x-y\Vert^2_2}\right)^{q/2} $$
\end{enumerate} 

To prove the first lower estimate for the compression modulus suppose that $$1-e^{-r_n\Vert x-y\Vert^2_2} \ge \ds\frac{(e-1)}{e}>0.$$ Then 
$e^{-r_n\Vert x-y\Vert^2_2}\le 1-\ds\frac{(e-1)}{e}=\ds\frac{1}{e}$, i.e. $e\le e^{r_n\Vert x-y\Vert^2_2}$ and finally $\Vert x-y\Vert_2^2\ge \frac{1}{r_n}$. Therefore if $t\ge \frac{1}{\sqrt{r_n}}$ one has $\rho_n(t)\ge\delta_q$ for some universal constant $\delta_q>0$. 

\smallskip

The two other estimates are obtained from the two inequalities
\begin{itemize}
\item[$\triangleright$] $1-e^{-t}\le t$ for all $t\ge 0$
\item[$\triangleright$] $1-e^{-t}\ge\frac{t}{e}$ when $0\le t\le 1$.
\end{itemize}
\end{proof}

As a warm-up one shows what type of strong embedding from $\ell_2$ into $\ell_2$ can be constructed out of Proposition \ref{Bembeddings}. Denote $h_{(a,b)}$ the inverse of the function $t\mapsto t^a\log^b(t)$.
Let $r_n=\frac{1}{n\log(n)^{\beta}}$ for some $\beta>1$. From Lemma \ref{Hmappings} there are maps $\varphi_n\colon \ell_2\to S_{\ell_2}$ so that,
\begin{itemize}
\item $\omega_n(t) \lesssim \frac{t}{n^{1/2}\log(n)^{\beta/2}}$ for all $t\ge 0$\\
\item $\rho_n(t)\ge \delta_2>0$ whenever $t\ge n^{1/2}\log(n)^{\beta/2}$\\
\item $\rho_n(t) \gtrsim \frac{t}{n^{1/2}\log(n)^{\beta/2}}$ whenever $t\le 1$
\end{itemize}

Take $\eta=\delta_2$, $\epsilon_n=\mu_n=\frac{1}{n^{1/2}\log(n)^{\beta/2}}$, $s_n=\frac{1}{\sqrt{r_n}}=n^{1/2}\log(n)^{\beta/2}$ and $\xi(t)=\gamma(t)=t$. According to Proposition \ref{Bembeddings} for every $\beta>1$ there is a strong embedding $\ell_2\to \ell_2 $ such that $$h_{(1/2,\beta/2)}(\Vert x-y\Vert_2)^{1/2}\lesssim_l \Vert\phi(x)-\phi(y)\Vert_2\lesssim \Vert x-y\Vert_2$$
and 
$$\Vert x-y\Vert_2\lesssim_s \Vert\phi(x)-\phi(y)\Vert_2\lesssim \Vert x-y\Vert_2.$$

Following a similar procedure one proves:
\begin{theorem}\label{Hsdeformation}For every $0<q<\infty$ and $\beta>1$ there exists a strong embedding $\phi$ from $\ell_2$ into $\ell_q$ so that if  

\medskip

\begin{itemize}
\item $q\ge 2$, $$h_{(1/2,\beta/2)}(\Vert x-y\Vert_2)^{1/q}\lesssim_l \Vert\phi(x)-\phi(y)\Vert_q\lesssim \Vert x-y\Vert_2^{2/q}$$ and $$\Vert x-y\Vert_2\lesssim_s \Vert\phi(x)-\phi(y)\Vert_q\lesssim \Vert x-y\Vert_2^{2/q}.$$

\smallskip

\item $1\le q\le 2$, $$h_{(1/q,\beta/q)}(\Vert x-y\Vert_2)^{1/q}\lesssim_l \Vert\phi(x)-\phi(y)\Vert_q\lesssim \Vert x-y\Vert_2$$ and $$\Vert x-y\Vert_2^{2/q}\lesssim_s \Vert\phi(x)-\phi(y)\Vert_q\lesssim \Vert x-y\Vert_2.$$

\smallskip

\item $0< q\le 1$,  
$$h_{(1/q^2,\beta/q^2)}(\Vert x-y\Vert_2)\lesssim_l d_q(\phi(x),\phi(y))\lesssim \Vert x-y\Vert_2^{q^2}$$ and $$\Vert x-y\Vert_2^{2q}\lesssim_s d_q(\phi(x),\phi(y))\lesssim \Vert x-y\Vert_2^{q^2}.$$

\end{itemize}
\end{theorem}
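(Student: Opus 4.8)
The plan is to carry out, in each of the three regimes, exactly the construction used in the warm-up preceding the statement. One chooses a sequence $(r_n)_{n\ge1}$ with $0\le r_n\le1$, applies Lemma \ref{Hmappings} to obtain fundamental maps $\varphi_n\colon\ell_2\to S_{\ell_q}$, and then feeds these into Proposition \ref{Bembeddings} (when $q\ge1$) or Proposition \ref{Fembeddings} (when $0<q\le1$) via the identifications $\eta=\delta_q$, $\epsilon_n=r_n^{\gamma_q}$, $\mu_n=r_n^{\xi_q}$, $s_n=1/\sqrt{r_n}$, $\gamma(t)=t^{2\gamma_q}$, $\xi(t)=t^{2\xi_q}$, where $(\gamma_q,\xi_q)$ is the pair given in Lemma \ref{Hmappings} (constants hidden in the $\lesssim$ of that lemma are harmless since the final conclusions are stated up to universal constants). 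The whole content of the argument is the correct choice of $(r_n)$.

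That choice is forced by the $q$-summability requirements of Lemma \ref{stronggluing}, namely $\sum_n r_n^{q\gamma_q}<\infty$ and $\sum_n r_n^{q\xi_q}<\infty$. Since $r_n\to0$, the more restrictive condition is the one carrying the smaller exponent $m_q:=\min\{q\gamma_q,q\xi_q\}$, and reading off the table in Lemma \ref{Hmappings} gives $m_q=1$ for $q\ge2$, $m_q=q/2$ for $1\le q\le2$, and $m_q=q^2/2$ for $0<q\le1$. I therefore pick $(r_n)$ so that $r_n^{m_q}=\tfrac{1}{n\log(n)^{\beta}}$ (modifying finitely many terms so that $0\le r_n\le1$); this is the borderline Bertrand series, summable exactly when $\beta>1$, which is where the hypothesis comes from. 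Concretely $r_n=\tfrac{1}{n\log(n)^{\beta}}$ if $q\ge2$, $r_n=\tfrac{1}{n^{2/q}\log(n)^{2\beta/q}}$ if $1\le q\le2$, and $r_n=\tfrac{1}{n^{2/q^2}\log(n)^{2\beta/q^2}}$ if $0<q\le1$; in the three cases $s_n=1/\sqrt{r_n}$ equals $n^{1/2}\log(n)^{\beta/2}$, $n^{1/q}\log(n)^{\beta/q}$, $n^{1/q^2}\log(n)^{\beta/q^2}$, so that the generalized inverse $s^-$ is $h_{(1/2,\beta/2)}$, $h_{(1/q,\beta/q)}$, $h_{(1/q^2,\beta/q^2)}$ respectively.

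It then suffices to verify the hypotheses of the two propositions and read off their conclusions. For $q$-summability: one of $\sum r_n^{q\gamma_q}$, $\sum r_n^{q\xi_q}$ is the Bertrand series just chosen and the other has a strictly larger exponent, hence eventually smaller terms, so it converges as well. For the requirement $0<\xi(t)\le\gamma(t)<\infty$ with $\gamma(t)\to0$ as $t\to0$: both $\gamma(t)=t^{2\gamma_q}$ and $\xi(t)=t^{2\xi_q}$ are positive, have positive exponents and vanish at $0$, and since $\xi_q\ge\gamma_q$ in all three cases one gets $\xi(t)\le\gamma(t)$ for small $t$ (for $0<q\le1$ only for $t\le1$, but the resulting compression bound is claimed only for small distances, so this is enough). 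Proposition \ref{Bembeddings} (resp.\ Proposition \ref{Fembeddings}) now applies, and substituting $\gamma(t)=t^{2\gamma_q}$, $\xi(t)=t^{2\xi_q}$ and the above $s^-$ reproduces, in each regime, exactly the pair of displayed inequalities: the expansion exponent is $2/q$, then $1$, then $q^2$; the small-scale compression exponent is $1$, then $2/q$, then $2q$; and the large-scale lower bound is $h_{(1/2,\beta/2)}(\cdot)^{1/q}$, then $h_{(1/q,\beta/q)}(\cdot)^{1/q}$, then $h_{(1/q^2,\beta/q^2)}(\cdot)$, in agreement with the three bullet points of the statement.

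The step needing the most care is the bookkeeping in the second paragraph: one must calibrate $(r_n)$ against the \emph{minimum} exponent $m_q$ — calibrating against the larger one would leave the other sequence non-$q$-summable — and then check that $s_n=1/\sqrt{r_n}$ has precisely the growth whose generalized inverse is the advertised $h_{(a,b)}$. Everything else (the Mazur-map estimates, the inequalities $1-e^{-t}\le t$ and $1-e^{-t}\ge t/e$ on $[0,1]$, and the assembly of the $\ell_q$-sum) is already packaged in Lemma \ref{Hmappings} and Propositions \ref{Bembeddings}--\ref{Fembeddings}.
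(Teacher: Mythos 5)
Your proposal is correct and follows essentially the same route as the paper: the same choices $r_n=1/(n\log(n)^{\beta})$, $1/(n^{2/q}\log(n)^{2\beta/q})$, $1/(n^{2/q^2}\log(n)^{2\beta/q^2})$ in the three regimes, the fundamental maps of Lemma \ref{Hmappings}, and Propositions \ref{Bembeddings}/\ref{Fembeddings} with exactly the parameters $\eta=\delta_q$, $\epsilon_n=r_n^{\gamma_q}$, $\mu_n=r_n^{\xi_q}$, $s_n=1/\sqrt{r_n}$, $\gamma(t)=t^{2\gamma_q}$, $\xi(t)=t^{2\xi_q}$ that the paper uses case by case. Your only departure is presentational — calibrating $r_n$ uniformly against $m_q=\min\{q\gamma_q,q\xi_q\}$ rather than listing each case — which is a harmless (and slightly cleaner) repackaging of the same argument.
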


\bigskip

\begin{proof}Let $\beta>1$. The proof in the case $0<q\le 1$ shall be detailed. For the remaining cases only the choice of the parameters shall be given.
\begin{itemize} 
\item For $0<q\le 1$ take $r_n=\frac{1}{n^{2/q^2}\log(n)^{(2\beta)/q^2}}$. From Lemma \ref{Hmappings} one gets fundamental maps $\varphi_n\colon \ell_2\to S_{\ell_q}$ satisfying
  
\begin{itemize}
\item $\omega_n(t)\lesssim \ds\frac{t^q}{n^{1/q}\log(n)^{\beta/q}}$\\
\item $\rho_n(t)\ge\delta_q$ whenever $t\ge n^{1/q^2}\log(n)^{\beta/q^2}$\\
\item $\rho_n(t)\gtrsim \ds\frac{t^2}{n^{2/q^2}\log(n)^{(2\beta)/q^2}}$ whenever $t\le 1$\\
\end{itemize}

Applying Proposition \ref{Fembeddings} with the parameters $\eta=\delta_q>0$,\\ $\ds\epsilon_n=\frac{1}{n^{1/q}\log(n)^{\beta/q}}$, $\ds\mu_n=\frac{1}{n^{2/q^2}\log(n)^{(2\beta)/q^2}}$, $\ds s_n=n^{1/q^2}\log(n)^{\beta/q^2}$, $\gamma(t)=t^q$ and $\xi(t)=t^2$ one gets the desired embedding.\\

\item If $2\le q$ pick $r_n=\ds\frac{1}{n\log(n)^{\beta}}$. Then
\begin{itemize}
\item $\omega_n(t)\lesssim \ds\frac{t^{2/q}}{n^{1/q}\log(n)^{\beta/q}}$\\
\item $\rho_n(t)\ge\delta_q$ whenever $t\ge n^{1/2}\log(n)^{\beta/2}$\\
\item $\rho_n(t)\gtrsim \ds\frac{t}{n^{1/2}\log(n)^{\beta/2}}$ whenever $t\le 1$\\
\end{itemize}

\item When $1\le q\le2$ choose $r_n=\frac{1}{n^{2/q}\log(n)^{(2\beta)/q}}$
\begin{itemize}
\item $\omega_n(t)\lesssim \ds\frac{t}{n^{1/q}\log(n)^{\beta/q}}$\\
\item $\rho_n(t)\ge\delta_q$ whenever $t\ge n^{1/q}\log(n)^{\beta/q}$\\
\item $\rho_n(t)\gtrsim \ds\frac{t^{2/q}}{n^{2/q^2}\log(n)^{(2\beta)/q^2}}$ whenever $t\le 1$\\
\end{itemize}
 
\end{itemize}
\end{proof}

\subsection{Applications}
\subsubsection
{$\ell_p$-compression of Lebesgue sequence spaces}\ \\

Theorem \ref{Hsdeformation} has a nice application to the $\ell_p$-compression of Lebesgue sequence spaces. First remark that all the embeddings are Lipschitz since for $t\ge 1$, $t^{a}\le t$ when $0<a\le 1$. The asymptotics of the function $h_{(a,b)}$ are needed to estimate the compression exponent. For every $c>0$ one has $t^a\log(t)^b\le t^{a+c}$ for $t$ large enough. The inverse of a increasing function being increasing the inequality $t^{1/(a+c)}\le h_{(a,b)}(t)$ holds. For instance, the compression function for the second embedding of theorem \ref{Hsdeformation} behaves asymptotically like $t^{1/(1+qc)}$ for every $c>0$ and this implies that the compression is $1$. Proceeding similarly for the remaining cases one can show:
\begin{corollary}[$\ell_q$-compression of $\ell_2$]\label{Hcompression}\ \\
\begin{itemize}
\item For $1\le q\le 2$, $\alpha_{\ell_q}(\ell_2)=1$\\
\item For $0<q\le 1$, $\alpha_{\ell_q}(\ell_2)\ge q^2$\\
\item For $2\le q<\infty$, $\alpha_{\ell_q}(\ell_2)\ge \frac{2}{q}$
\end{itemize}
\end{corollary}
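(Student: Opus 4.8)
The plan is to apply, case by case to the three embeddings of Theorem \ref{Hsdeformation}, the two observations already recorded just before the statement. First, every such $\phi$ is Lipschitz for large distances: each upper estimate has the form $\Vert x-y\Vert_2^{a}$ with $0<a\le 1$ (namely $a=2/q$, $a=1$, $a=q^2$ respectively), and $t^{a}\le t$ for $t\ge 1$; moreover $\phi$ is a strong embedding, hence coarse, so $\rho(t)\to\infty$. Thus each $\phi$ is an admissible competitor in the definition of $\alpha_{\ell_q}(\ell_2)$, and the upper requirement $d_{\ell_q}(\phi(x),\phi(y))\lesssim\Vert x-y\Vert_2$ at large distances is automatic. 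Second, for every $c>0$ the increasing-inverse inequality $t^{1/(a+c)}\le h_{(a,b)}(t)$ holds for $t$ large, which absorbs the logarithmic factor at the cost of an arbitrarily small loss in the exponent.

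Next I would run the three regimes. For $2\le q<\infty$ the compression side of Theorem \ref{Hsdeformation} reads $h_{(1/2,\beta/2)}(\Vert x-y\Vert_2)^{1/q}\lesssim_l\Vert\phi(x)-\phi(y)\Vert_q$, and composing with $t^{1/(1/2+c)}\le h_{(1/2,\beta/2)}(t)$ gives $\Vert x-y\Vert_2^{\,2/(q(1+2c))}\lesssim_l\Vert\phi(x)-\phi(y)\Vert_q$; since $c>0$ is arbitrary, this yields $\alpha_{\ell_q}(\ell_2)\ge 2/q$. For $0<q\le 1$ the compression side is $h_{(1/q^2,\beta/q^2)}(\Vert x-y\Vert_2)\lesssim_l d_q(\phi(x),\phi(y))$, and $t^{1/(1/q^2+c)}\le h_{(1/q^2,\beta/q^2)}(t)$ produces $\Vert x-y\Vert_2^{\,q^2/(1+q^2c)}\lesssim_l d_q(\phi(x),\phi(y))$, hence $\alpha_{\ell_q}(\ell_2)\ge q^2$. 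For $1\le q\le 2$ one similarly gets, from $h_{(1/q,\beta/q)}(\Vert x-y\Vert_2)^{1/q}\lesssim_l\Vert\phi(x)-\phi(y)\Vert_q$ and $t^{1/(1/q+c)}\le h_{(1/q,\beta/q)}(t)$, the estimate $\Vert x-y\Vert_2^{\,1/(1+qc)}\lesssim_l\Vert\phi(x)-\phi(y)\Vert_q$, so $\alpha_{\ell_q}(\ell_2)\ge 1$; as the compression exponent of a pair of metric spaces never exceeds $1$, this forces $\alpha_{\ell_q}(\ell_2)=1$.

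I do not expect a genuine obstacle here beyond bookkeeping. The only points deserving a line of justification are that the symbol $\lesssim_l$ is precisely the ``for $d(x,y)$ large enough'' restriction appearing in the definition of $\alpha_\T(\D)$, so each displayed chain can be quoted verbatim as a witness for a lower bound on the compression exponent, and that taking the supremum over $c>0$ in each case is harmless, i.e. the logarithmic correction hidden in $h_{(a,b)}$ costs nothing in the exponent. Both are immediate from the definitions in Section \ref{notation}.
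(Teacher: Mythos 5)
Your proposal is correct and follows essentially the same route as the paper: the paper also deduces the corollary from Theorem \ref{Hsdeformation} by first observing that each embedding is Lipschitz at large distances (since $t^{a}\le t$ for $t\ge1$, $0<a\le1$) and then using the asymptotics $t^{1/(a+c)}\le h_{(a,b)}(t)$ to absorb the logarithmic factor at an arbitrarily small cost $c>0$ in the exponent, e.g. obtaining $t^{1/(1+qc)}$ for $1\le q\le 2$ and hence compression $1$. Your case-by-case bookkeeping of the exponents ($2/(q(1+2c))$, $q^{2}/(1+q^{2}c)$, $1/(1+qc)$) matches the paper's computation.
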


In \cite{KaltonRandrianarivony2008} it is proven that $\ell_p$ does not admit a bi-Lipschitz embedding for large distances into $\ell_q$ for any $p\neq q\in[1,\infty)$. Nevertheless the best $\ell_q$-compression for $\ell_2$ can still be achieved when $1\le q\le 2$. None of the lower bounds on the compression are attained using the techniques from this article, however the lower bound $\alpha_{\ell_p}(\ell_2)\ge \frac{2}{p}$ for $2<p<\infty$, matches the one obtained in \cite{AlbiacBaudier2013} where it is proved that it is actually attained. Indeed using completely different techniques it is shown that there exists a bi-Lipschitz embedding of $(\ell_2, \Vert\cdot\Vert_2^{2/q})$ into $\ell_q$ when $q>2$. This bi-Lipschitz embedding of a snowflaking of the Euclidean distance is simultaneously a coarse and uniform embedding, i.e. a strong embedding. The strong deformation gap of this embedding is $[t^{2/q}, t^{2/q}]$ and it implies that $\alpha_{\ell_q}(\ell_2)\ge \frac{2}{q}$. More generally one has $\alpha_{\ell_q}(\ell_p)\ge \frac{1}{q}$ if $0<p\le 1\le q$, $\alpha_{\ell_q}(\ell_p)=1$ when $0<p<q\le 1$ and $\alpha_{\ell_q}(\ell_p)\ge \frac{p}{q}$ whenever $1\le p<q$. Combining those estimates together with corollary \ref{Hcompression} one obtains: 

\begin{corollary}[$\ell_q$-compression of $\ell_p$ for $0<q<p<2$]\ \\
\begin{itemize}
\item For $0< q\le 1\le p<2$, $\alpha_{\ell_q}(\ell_p)\ge \ds\frac{pq^2}{2}$\\
\item For $0<q<p\le 1$, $\alpha_{\ell_q}(\ell_p)\ge q^2/2$\\
\item For $1\le q<p<2$, $\alpha_{\ell_q}(\ell_p)\ge \frac{p}{2}$
\end{itemize}
\end{corollary}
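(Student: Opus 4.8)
The plan is to factor all of these embeddings through the Hilbert space $\ell_2$ and to multiply the resulting compression exponents. I would first isolate the elementary \emph{composition principle} for compression exponents: if $f\colon(\D,d_\D)\to(\MM,d_\MM)$ and $g\colon(\MM,d_\MM)\to(\T,d_\T)$ are both Lipschitz for large distances and satisfy $d_\D(x,y)^{a}\lesssim d_\MM(f(x),f(y))$ for $d_\D(x,y)$ large and $d_\MM(u,v)^{b}\lesssim d_\T(g(u),g(v))$ for $d_\MM(u,v)$ large, then, because $a>0$ forces $d_\MM(f(x),f(y))\to\infty$ as $d_\D(x,y)\to\infty$, the large-distance estimates for $g$ become available along the image of $f$, and one gets $d_\D(x,y)^{ab}\lesssim d_\T(g(f(x)),g(f(y)))\lesssim d_\D(x,y)$ for $d_\D(x,y)$ large. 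Taking the supremum over admissible $a,b$ this reads $\alpha_\T(\D)\ge\alpha_\MM(\D)\,\alpha_\T(\MM)$.

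Next I would record the Hilbert stage. Since $t\mapsto|t|^p$ is a negative definite kernel on $\R$ for $0<p\le2$, summing over coordinates shows $(x,y)\mapsto\sum_n|x_n-y_n|^p$ is negative definite on $\ell_p$, so by Schoenberg's theorem its square root embeds isometrically into a separable Hilbert space, i.e. into $\ell_2$. Thus $(\ell_p,\|\cdot\|_p^{p/2})$ embeds isometrically into $\ell_2$ for $1\le p\le2$ and $(\ell_p,d_p^{1/2})$ embeds isometrically into $\ell_2$ for $0<p\le1$; as $t^{p/2}\le t$ (resp. $t^{1/2}\le t$) for $t\ge1$, these maps are Lipschitz for large distances and therefore witness $\alpha_{\ell_2}(\ell_p)\ge p/2$ for $1\le p<2$ and $\alpha_{\ell_2}(\ell_p)\ge1/2$ for $0<p\le1$ — these are exactly the general estimates recalled just above, specialized to $q=2$.

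It then remains to insert the three regimes. For $1\le q<p<2$ I would compose the embedding $\ell_p\to\ell_2$ (compression $\ge p/2$) with the embedding $\ell_2\to\ell_q$ furnished by Theorem~\ref{Hsdeformation} and Corollary~\ref{Hcompression}, which has compression exponent $1$; the principle above yields $\alpha_{\ell_q}(\ell_p)\ge\frac{p}{2}\cdot1=\frac{p}{2}$. For $0<q\le1\le p<2$ I would compose the same $\ell_p\to\ell_2$ embedding with an embedding $\ell_2\to\ell_q$ of compression exponent $\ge q^2$ (again Corollary~\ref{Hcompression}), obtaining $\alpha_{\ell_q}(\ell_p)\ge\frac{p}{2}\,q^2=\frac{pq^2}{2}$. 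For $0<q<p\le1$ the identical recipe, now with $\alpha_{\ell_2}(\ell_p)\ge1/2$, gives $\alpha_{\ell_q}(\ell_p)\ge\frac{1}{2}\,q^2=\frac{q^2}{2}$.

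The only point that needs genuine care — and the step I expect to absorb most of the (still very modest) work — is the bookkeeping in the composition principle once an index drops below $1$: the ambient distance on $\ell_p$ or $\ell_q$ is then the $F$-metric $d_p$ or $d_q$ rather than a norm, and one must check that the large-distance upper bound $d_\T(g(f(x)),g(f(y)))\lesssim d_\D(x,y)$ survives both the snowflaking $t\mapsto t^{p/2}$ and the Mazur-type distortion already baked into Corollary~\ref{Hcompression}. Each time this reduces to the trivial inequality $t^a\le t$ for $0<a\le1$ and $t\ge1$ (here $a$ equals $p/2$, or $q^2$, or a product of such), together with the remark that the logarithmic factors occurring in the compression functions of Theorem~\ref{Hsdeformation} do not affect the exponent under composition. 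No genuinely new phenomenon appears; the content of the corollary is precisely the multiplicativity of the $\ell_2$-routed compression exponents.
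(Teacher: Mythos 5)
Your proposal is correct and follows essentially the same route as the paper: the paper's proof is exactly the multiplicativity $\alpha_Z(X)\ge\alpha_Y(X)\,\alpha_Z(Y)$ applied to the factorization $\ell_p\coarse\ell_2\coarse\ell_q$, combined with Corollary~\ref{Hcompression} and the recalled estimates $\alpha_{\ell_2}(\ell_p)\ge p/2$ ($1\le p<2$) and $\alpha_{\ell_2}(\ell_p)\ge 1/2$ ($0<p\le1$). The only difference is cosmetic: you re-derive those first-stage estimates via Schoenberg's theorem on the negative definite kernel $\sum_n|x_n-y_n|^p$, whereas the paper simply cites them (from \cite{AlbiacBaudier2013}), and your care about the large-distance bookkeeping in the composition is sound.
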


\begin{proof}
Follows from Corollary \ref{Hcompression} together with the estimates recalled above and the fact that if $$X\coarse Y\coarse Z$$ then $\alpha_Z(X)\ge \alpha_Y(X)\alpha_Z(Y)$.
\end{proof}

For the remaining cases the compression is either 0 when there is no coarse embedding (cf \cite{JohnsonRandrianarivony2006} and \cite{MendelNaor2008}) or will be given in Section \ref{upper bounds}.

\subsubsection{Nonlinear embeddings between classical Lebesgue spaces in the range $0<p\le 1$}\ \\

\smallskip

Recall that there is \textit{no nontrivial Lipschitz map} from $\ell_p$ into $\ell_q$ if $0<q<p\le 1$ and it was left open in \cite{AlbiacBaudier2013} whether or not one could construct a weaker nonlinear embedding. One can easily derive from Theorem \ref{Hsdeformation} and the table in Section 6 from \cite{AlbiacBaudier2013} that there exists a strong embedding between those spaces.

\begin{corollary}\label{Fstrongpq}Let $0<q<p\le 1$. For every $\beta>1$ there exists a strong embedding $\phi$ from $\ell_p$ into $\ell_q$ so that  
$$h_{\left(\frac{1}{q^2},\frac{\beta}{q^2}\right)}(d_p(x,y))^{1/2}\lesssim_l d_q(\phi(x),\phi(y))\lesssim d_p(x,y)^{q^2/2}$$ and $$d_p(x,y)^{q}\lesssim_s d_q(\phi(x),\phi(y))\lesssim d_p(x,y)^{q^2/2}.$$
\end{corollary}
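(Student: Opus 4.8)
The strategy I would follow is to factor through a Hilbert space, writing $\phi=\phi_0\circ\iota$ where $\iota\colon\ell_p\to\ell_2$ is the canonical ``snowflake'' isometry attached to the negative type of $d_p$, and $\phi_0\colon\ell_2\to\ell_q$ is the strong embedding produced by Theorem~\ref{Hsdeformation} in the range $0<q\le1$. To obtain $\iota$, observe that for $0<p\le1$ the kernel $u(x,y)=d_p(x,y)=\sum_{n}|x_n-y_n|^p$ on $\ell_p$ is negative definite, being a countable sum of the negative definite kernels $(s,t)\mapsto|s-t|^p$ on $\R$ (these are negative definite exactly because $0<p\le2$). Hence, just as in the proof of Lemma~\ref{schoenberg}, Schoenberg's theorem yields a Hilbert space — which, being spanned by the image of the separable space $\ell_p$, may be taken to be $\ell_2$ — together with a map $\iota\colon\ell_p\to\ell_2$ satisfying $\|\iota(x)-\iota(y)\|_2=d_p(x,y)^{1/2}$ for all $x,y$. (This is one of the entries in the table of Section~6 of \cite{AlbiacBaudier2013}.) In particular $\iota$ is itself a strong embedding of $(\ell_p,d_p)$ into $\ell_2$ with $\rho_\iota(t)=\omega_\iota(t)=t^{1/2}$.

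First I would record the routine fact that a composite of strong embeddings is a strong embedding, with $\omega_{\phi_0\circ\iota}(t)\le\omega_{\phi_0}(\omega_\iota(t))$ and $\rho_{\phi_0\circ\iota}(t)\ge\rho_{\phi_0}(\rho_\iota(t))$; applied here this already shows $\phi:=\phi_0\circ\iota$ is a strong embedding of $\ell_p$ into $\ell_q$. Then I would transport each of the three estimates of Theorem~\ref{Hsdeformation} (case $0<q\le1$) across $\iota$ by substituting $\|u-v\|_2=d_p(x,y)^{1/2}$ and using that $t\mapsto t^{1/2}$ is an increasing bijection of $(0,\infty)$, so that $d_p(x,y)$ is large (resp.\ small) precisely when $\|\iota(x)-\iota(y)\|_2$ is. The upper estimate $d_q(\phi_0(u),\phi_0(v))\lesssim\|u-v\|_2^{q^2}$ becomes $d_q(\phi(x),\phi(y))\lesssim d_p(x,y)^{q^2/2}$; the small-distance lower estimate $\|u-v\|_2^{2q}\lesssim_s d_q(\phi_0(u),\phi_0(v))$ becomes $d_p(x,y)^{q}\lesssim_s d_q(\phi(x),\phi(y))$; and the large-distance lower estimate $h_{(1/q^2,\beta/q^2)}(\|u-v\|_2)\lesssim_l d_q(\phi_0(u),\phi_0(v))$ becomes $h_{(1/q^2,\beta/q^2)}\!\big(d_p(x,y)^{1/2}\big)\lesssim_l d_q(\phi(x),\phi(y))$, which is the quantity written $h_{(1/q^2,\beta/q^2)}(d_p(x,y))^{1/2}$ in the statement. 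These are exactly the asserted inequalities.

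As a self-contained alternative one can bypass the composition bookkeeping and argue directly through the machinery of Section~2: apply Schoenberg to $e^{-r_n d_p(\cdot,\cdot)}$ to build fundamental maps $\psi_n\colon\ell_p\to S_{\ell_2}$ with $\|\psi_n(x)-\psi_n(y)\|_2=\sqrt{2(1-e^{-r_nd_p(x,y)})}$, post-compose with the Mazur map $M_{2,q}$ and invoke the Mazur estimate for $0<q\le1$ (this is the computation of Lemma~\ref{Hmappings} with $\|x-y\|_2^2$ replaced by $d_p(x,y)$), and then feed the resulting moduli into Proposition~\ref{Fembeddings} with $\gamma(t)=t^{1/2}$, $\xi(t)=t$, $\epsilon_n=r_n^{q/2}$, $\mu_n=r_n$, $s_n\asymp 1/r_n$ and $r_n=n^{-2/q^2}(\log n)^{-2\beta/q^2}$, the last choice being made so that $\sum_n\epsilon_n^q=\sum_n r_n^{q^2/2}=\sum_n n^{-1}(\log n)^{-\beta}<\infty$ for every $\beta>1$. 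There is essentially no obstacle here: the substance is carried entirely by Theorem~\ref{Hsdeformation} together with the classical fact that $d_p$ is of negative type for $0<p\le1$, and the only point that needs attention is that the scale thresholds implicit in $\lesssim_l$ and $\lesssim_s$ propagate correctly through $\iota$ — which they do, since $d_p(x,y)\ge\tau_0^2\iff\|\iota(x)-\iota(y)\|_2\ge\tau_0$, and likewise for small scales.
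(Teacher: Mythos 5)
Your main argument is correct and is essentially the paper's own proof: the corollary is obtained there exactly as you do, by composing the Schoenberg square-root embedding $\iota\colon(\ell_p,d_p)\to\ell_2$ with $\Vert\iota(x)-\iota(y)\Vert_2=d_p(x,y)^{1/2}$ (one of the facts recorded in the table of \cite{AlbiacBaudier2013}) with the case $0<q\le1$ of Theorem \ref{Hsdeformation}. The only caveats are minor bookkeeping: $h_{(1/q^2,\beta/q^2)}\bigl(d_p(x,y)^{1/2}\bigr)$ is not literally $h_{(1/q^2,\beta/q^2)}(d_p(x,y))^{1/2}$ but equals $h_{(2/q^2,2\beta/q^2)}(d_p(x,y))\asymp h_{(1/q^2,2\beta/q^2)}(d_p(x,y))^{1/2}$, i.e.\ the stated bound with $\beta$ replaced by $2\beta$ --- a logarithmic-factor discrepancy that is harmless here (it is absorbed by the arbitrariness of $\beta>1$ and is irrelevant for the compression applications, and the paper itself glosses over it) --- and in your alternative direct construction one should take $\gamma(t)=t^{q/2}$ rather than $t^{1/2}$, so that condition (1) of Lemma \ref{stronggluing} actually holds for small $t$ and Proposition \ref{Fembeddings} returns the upper bound $d_p(x,y)^{q^2/2}$.
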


\bigskip

It was known that for $1\le p\le 2$ the space $L_p$ embeds uniformly \cite{BenyaminiLindenstrauss2000} or coarsely \cite{Nowak2006} into $\ell_p$ and actually strongly from \cite{Kraus}. These embeddings can now be extended to values of $p$ less than 1 and can be made quantitative. 
 \begin{corollary}\label{Lplp}Let $0<p<2$. For every $\beta>1$ there exists a strong embedding $\phi$ from $L_p$ into $\ell_p$ such that if   

\medskip

\begin{itemize}
\item $0< p\le 1$,  
$$h_{\left(\frac{1}{p^2},\frac{\beta}{p^2}\right)}(d_{L_p}(x,y))^{1/2}\lesssim_l d_{\ell_p}(\phi(x),\phi(y))\lesssim d_{L_p}(x,y)^{\frac{p^2}{2}}$$ and $$d_{L_p}(x,y)^{p}\lesssim_s d_p(\phi(x),\phi(y))\lesssim d_{L_p}(x,y)^{\frac{p^2}{2}}.$$

\medskip

\item $1\le p< 2$, $$h_{(\frac{1}{p},\frac{\beta}{p})}(\Vert x-y\Vert_{L_p})^{1/2}\lesssim_l \Vert\phi(x)-\phi(y)\Vert_{\ell_p}\lesssim \Vert x-y\Vert_{L_p}^{p/2}$$ and $$\Vert x-y\Vert_{L_p}\lesssim_s \Vert\phi(x)-\phi(y)\Vert_{\ell_p}\lesssim \Vert x-y\Vert_{L_p}^{p/2}.$$
\end{itemize}
\end{corollary}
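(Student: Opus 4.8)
The plan is to route the embedding through a separable Hilbert space and then quote Theorem \ref{Hsdeformation} with $q=p$. The first ingredient is the relevant entry of the table in Section 6 of \cite{AlbiacBaudier2013}: for every $0<p<2$ an appropriate power of the metric of $L_p$ is a kernel of negative type. Precisely, for $1\le p<2$ the function $(x,y)\mapsto\Vert x-y\Vert_{L_p}^{p}$ is negative definite on $L_p$, and for $0<p\le1$ the function $(x,y)\mapsto d_{L_p}(x,y)=\int\vert x-y\vert^{p}$ is negative definite on $L_p$; both facts follow from $t\mapsto\vert t\vert^{p}$ being negative definite on $\R$ for $0<p\le2$, together with stability of negative definiteness under sums and integrals. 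Schoenberg's theorem (already invoked in the proof of Lemma \ref{schoenberg}) then produces a map $g$ from $L_p$ into a separable Hilbert space, which we identify with $\ell_2$, such that $\Vert g(x)-g(y)\Vert_{2}=\Vert x-y\Vert_{L_p}^{p/2}$ for $1\le p<2$, and $\Vert g(x)-g(y)\Vert_{2}=d_{L_p}(x,y)^{1/2}$ for $0<p\le1$; in particular $g$ is itself a strong embedding.

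I would then combine $g$ with Theorem \ref{Hsdeformation} (applied to $\ell_2$ with $q=p$, using the branch $1\le q<2$ if $1\le p<2$ and the branch $0<q\le1$ if $0<p\le1$) in one of two equivalent ways. Either set $\phi=\psi\circ g$, where $\psi\colon\ell_2\to\ell_p$ is the strong embedding of Theorem \ref{Hsdeformation}, and use that a composition of strong embeddings is strong; or, more transparently for the bookkeeping, re-run the construction underlying Lemma \ref{stronggluing} with the fundamental maps $\varphi_n=M_{2,p}\circ\psi_{r_n}\circ g$, where the $\psi_{r_n}$ are the Schoenberg maps of Lemma \ref{schoenberg}. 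Since $\Vert\psi_{r_n}(g(x))-\psi_{r_n}(g(y))\Vert=\sqrt{2(1-e^{-r_n\Vert x-y\Vert_{L_p}^{p}})}$ (respectively with $d_{L_p}(x,y)$ in the exponent), the estimates of Lemma \ref{Hmappings} carry over with $\Vert x-y\Vert_{2}^{2}$ replaced by $\Vert x-y\Vert_{L_p}^{p}$ (respectively by $d_{L_p}(x,y)$); choosing $r_n$ of the form $(n^{a}\log(n)^{b})^{-1}$ by the recipe used in the proof of Theorem \ref{Hsdeformation} (so that $(\epsilon_n)$ and $(\mu_n)$ are $p$-summable and $s_n=1/r_n$ has the prescribed growth), Propositions \ref{Bembeddings} and \ref{Fembeddings} yield the upper bound and the small-distance lower bound immediately --- they reduce algebraically to $\Vert x-y\Vert_{L_p}^{p/2}$ and $\Vert x-y\Vert_{L_p}$ (respectively to $d_{L_p}(x,y)^{p^{2}/2}$ and $d_{L_p}(x,y)^{p}$). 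The large-distance lower bound is $s^{-}(\Vert x-y\Vert_{L_p})$ (respectively $s^{-}(d_{L_p}(x,y))$), and it takes the displayed form $h_{(1/p,\beta/p)}(\cdot)^{1/2}$, resp. $h_{(1/p^{2},\beta/p^{2})}(\cdot)^{1/2}$, upon applying the elementary scaling identities $h_{(a,b)}(w^{\theta})=h_{(a/\theta,\,b/\theta)}(w)$ and $h_{(a,b)}(w)^{\theta}\asymp h_{(a/\theta,\,b)}(w)$, after allowing $\beta$ to be replaced by a suitable larger constant --- the precise power of the logarithm, and hence the exact admissible range of $\beta$, being immaterial for the application to the $\ell_p$-compression of $L_p$.

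The one genuinely delicate point is the negative-type input in the range $0<p\le1$: there $d_{L_p}$ is not a norm, so one cannot invoke an isometric linear embedding of $L_p$ into $L_1$ but must argue directly at the level of kernels, as indicated above. Once that is in place the argument is uniform in $p\in(0,2)$; specialized to $1\le p<2$ it upgrades to a quantitative statement the qualitative strong embeddability of $L_p$ into $\ell_p$ recorded in \cite{Kraus}, and for $0<p\le1$ it is new. I expect the actual write-up to be short, amounting to a substitution into Theorem \ref{Hsdeformation} once the Hilbert-space factor $g$ is available.
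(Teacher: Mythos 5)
Your argument is correct and is essentially the paper's own proof: the paper also obtains the result by composing a Hilbertian factor $L_p\to\ell_2$ with the $q=p$ case of Theorem \ref{Hsdeformation}, using the chains $L_p\isometricword L_1\strong L_2\strong\ell_p$ for $0<p\le 1$ and $L_p\strong L_2\strong\ell_p$ for $1\le p<2$; your direct Schoenberg/negative-definite-kernel construction of the map $g$ is just the standard proof of the same fact (for $0<p\le 1$ it is exactly the composite $L_p\isometricword L_1\strong L_2$), and your bookkeeping of the moduli, including the harmless adjustment of the logarithmic exponent $\beta$, matches what the paper's composition yields.
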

\begin{proof}
Use the quantitative estimates corresponding to the chains of embeddings $$L_p\isometricword L_1\strong L_2\strong \ell_p \textrm{ if } 0<p\le 1$$ and 
$$L_p\strong L_2\strong \ell_p \textrm{ when }1\le p\le 2.$$
\end{proof}
Strong embeddings between Orlicz sequence spaces are studied in \cite{Kraus}. For instance if the Orlicz spaces are taken to be $\ell_q$-spaces, by inspecting the proofs a coarse deformation gap $[t^{1/q},t^{2/q})$ and a uniform deformation gap $(t^{2/q},t^{1/q}]$ are obtained while embedding $\ell_2$ into $\ell_q$ for $2<q$. Reformulating Theorem \ref{Hsdeformation} in terms of nonlinear gaps one obtains a coarse deformation gap $(t^{2/q},t^{2/q}]$ and a uniform deformation gap $[t,t^{2/q}]$. These gaps have to be compared to the strong deformation gap $[t^{2/q},t^{2/q}]$ that can be found in \cite{AlbiacBaudier2013}.

\begin{remark}One can derive the following interesting compression estimates out of Corollary \ref{Fstrongpq} and the asymptotics of the function $h$: 

\begin{itemize}
\item[$\triangleright$] For $1\le p<2$, $\alpha_{\ell_p}(L_p)\ge \frac{p}{2}$\\
\item[$\triangleright$] For $0<p\le 1$, $\alpha_{\ell_p}(L_p)\ge \frac{p^2}{2}$\\
\end{itemize}
\end{remark}

\subsubsection{Embeddings of metrics of Lipschitz-negative type}\ \\

\smallskip

A metric is of negative type if and only if its $1/2$-snowflaking (the square root of the distance) admits an isometric embedding into a Hilbert space. Low distortion embeddings of finite metric spaces of negative type into $L_1$ or $\ell_2$ is of great interest in Theoretical Computer Science. Roughly speaking, the worst approximation ratio achieved by a semidefinite programming relaxation for the general Sparsest Cut problem coincides with the $L_1$-distortion of metrics of negative type. It is worth mentioning that $L_1$ is a metric of negative type and so is every metric space which is isometric to a subset of $L_1$. Goemans and Linial raised the following conjecture: Is every metric space of negative type bi-Lipschitz equivalent to a subset of $L_1$? This conjecture was disproved by Khot and Vishnoi \cite{KhotVishnoi2005}. See also \cite{LeeNaor2006} for a counter-example not related to the Unique Game Conjecture. A metric will be said to be of \textit{Lipschitz-negative type} if its $1/2$-snowflaking admits a bi-Lipschitz embedding into a Hilbert space. In other words, $(\MM,d)$ is a metric of Lipschitz-negative type if there exist $D>0$, a Hilbert space $H$ and $f\colon \MM\to H$ such that $$\frac{1}{D}\sqrt{d(x,y)}\le \Vert f(x)-f(y)\Vert_H\le \sqrt{d(x,y)}.$$

Those metrics are called $D$-half snowflakes in \cite{LeeMoharrami2010} where they prove that a metric of Lipschitz negative type is not necessarily bi-Lipschitz equivalent to a metric of (isometric) negative type. From \cite{AlbiacBaudier2013} it can be derived that every (non-uniformly discrete and unbounded) metric of Lipschitz-negative type admits a strong embedding into $\ell_q$, $q>2$ with strong deformation gap $[t^{1/q},t^{1/q}]$.  Another application of Theorem \ref{Hsdeformation} is stated in the next corollary in a weak form for the sake of clarity. The compression modulus is actually a little better.

\begin{corollary}\label{lipneg}
Every (non-discrete and unbounded) metric of Lipschitz-negative type admits a strong embedding into $\ell_q$ with 
\begin{itemize}
\item uniform deformation gap $\left[t^{q},t^{\frac{q^2}{2}}\right]$ and coarse deformation gap $\left(t^{\frac{q^2}{2}},t^\frac{q^2}{2}\right]$ if $0< q\le 1$.

\smallskip

\item uniform deformation gap $\left[t^{\frac{1}{q}},t^{\frac{1}{2}}\right]$ and coarse deformation gap $\left(t^{\frac{1}{2}},t^{\frac{1}{2}}\right]$ if $1\le q< 2$.

\smallskip

\item uniform deformation gap $\left[t^{\frac{1}{2}},t^{\frac{1}{q}}\right]$ and coarse deformation gap $\left(t^{\frac{1}{q}},t^{\frac{1}{q}}\right]$ if $q>2$.
\end{itemize}
\end{corollary}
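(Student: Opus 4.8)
The plan is to reduce Corollary \ref{lipneg} to Theorem \ref{Hsdeformation} by composing the given half-snowflake embedding with the quantitative strong embedding of $\ell_2$ into $\ell_q$. Let $(\MM,d)$ be a non-discrete unbounded metric of Lipschitz-negative type, so there is a Hilbert space $H$, a constant $D>0$ and a map $f\colon\MM\to H$ with $\frac{1}{D}\sqrt{d(x,y)}\le\Vert f(x)-f(y)\Vert_H\le\sqrt{d(x,y)}$. Since $H$ is isometric to a (closed) subspace of $\ell_2$ — or, if one prefers to avoid separability issues, since the strong embeddings of Theorem \ref{Hsdeformation} are built via the Schoenberg/Mazur map machinery of Lemmas \ref{schoenberg} and \ref{Hmappings}, which work verbatim over any Hilbert space — we may regard $f$ as taking values in $\ell_2$. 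Then $\phi\circ f\colon\MM\to\ell_q$ is the desired embedding, where $\phi$ is the strong embedding of Theorem \ref{Hsdeformation} for the chosen $q$ and some fixed $\beta>1$.

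The main computation is then just substituting the two-sided bound $\Vert f(x)-f(y)\Vert_2\asymp\sqrt{d(x,y)}$ (up to the constant $D$, which disappears in the $\lesssim$ notation) into the inequalities of Theorem \ref{Hsdeformation}. For $q>2$, the expansion side gives $\Vert\phi(f(x))-\phi(f(y))\Vert_q\lesssim\Vert f(x)-f(y)\Vert_2^{2/q}\lesssim d(x,y)^{1/q}$; the small-distance compression gives the lower bound $\Vert f(x)-f(y)\Vert_2\lesssim\Vert\phi(f(x))-\phi(f(y))\Vert_q$, i.e. $d(x,y)^{1/2}\lesssim\Vert\phi(f(x))-\phi(f(y))\Vert_q$, hence the uniform deformation gap $[t^{1/2},t^{1/q}]$; and the large-distance compression gives $h_{(1/2,\beta/2)}(\Vert f(x)-f(y)\Vert_2)^{1/q}\lesssim\Vert\phi(f(x))-\phi(f(y))\Vert_q$. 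Using $t^{1/(1/2+c)}\lesssim h_{(1/2,\beta/2)}(t)$ for every $c>0$ together with $\Vert f(x)-f(y)\Vert_2\gtrsim d(x,y)^{1/2}$, one gets $d(x,y)^{1/(q(1+2c))}\lesssim\Vert\phi(f(x))-\phi(f(y))\Vert_q$ for large $d(x,y)$, which in the $(t^\alpha,t^\beta)$-notation yields the coarse deformation gap $(t^{1/q},t^{1/q}]$ (the weak form; the true compression function is $h_{(1/2,\beta/2)}(\sqrt{\cdot})^{1/q}$, whence the remark that the compression modulus is in fact a bit better). The cases $1\le q<2$ and $0<q\le1$ are identical in spirit: for $1\le q<2$ one plugs $\Vert f(x)-f(y)\Vert_2\asymp\sqrt{d(x,y)}$ into the bounds $\Vert\cdot\Vert_2^{2/q}\lesssim_s\Vert\phi\cdot\Vert_q\lesssim\Vert\cdot\Vert_2$, getting uniform gap $[t^{1/q},t^{1/2}]$ and, from $h_{(1/q,\beta/q)}$, coarse gap $(t^{1/2},t^{1/2}]$; for $0<q\le1$ one uses the snowflaked metric bounds $d_q(\phi\cdot)\lesssim\Vert\cdot\Vert_2^{q^2}\asymp d(x,y)^{q^2/2}$ and $\Vert\cdot\Vert_2^{2q}\lesssim_s d_q(\phi\cdot)$, giving $d(x,y)^q\lesssim_s d_q(\phi(f(x)),\phi(f(y)))\lesssim d(x,y)^{q^2/2}$, i.e. uniform gap $[t^q,t^{q^2/2}]$, and from $h_{(1/q^2,\beta/q^2)}$ the coarse gap $(t^{q^2/2},t^{q^2/2}]$.

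The only genuine subtlety — and the step I would be most careful about — is the passage from an abstract Hilbert space $H$ to $\ell_2$: if $\MM$ is not separable then $H$ and hence $\phi\circ f$ may land in a nonseparable Hilbert space. This is harmless because the constructions of Lemmas \ref{schoenberg} and \ref{Hmappings} are purely Hilbert-space-theoretic (the exponential Hilbert space $\mathrm{Exp}(H)$ and the Mazur map $M_{2,q}$ on the sphere of a possibly nonseparable $\ell_2(\Gamma)$ satisfy exactly the same estimates), so Theorem \ref{Hsdeformation} holds with $\ell_2$ replaced by any Hilbert space and $\ell_q$ replaced by $\ell_q(\Gamma)$ for the appropriate index set $\Gamma$; one states the corollary for $\ell_q$ with the tacit understanding that the target is $\ell_q(\Gamma)$, consistent with the introduction's remark that all target spaces are of the form $\ell_p(\Gamma)$. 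Everything else is bookkeeping: tracking which side of each inequality is "large-distance only" ($\lesssim_l$) versus "small-distance only" ($\lesssim_s$), and invoking the asymptotics $t^{1/(a+c)}\lesssim h_{(a,b)}(t)\lesssim t^{1/a}$ to convert the $h$-type compression function into the advertised $(t^\alpha,t^\alpha]$ coarse gap.
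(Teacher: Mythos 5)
Your proposal is correct and follows essentially the route the paper intends: compose the bi-Lipschitz half-snowflake embedding $f\colon\MM\to H$ given by the Lipschitz-negative-type hypothesis with the strong embedding $\phi$ of Theorem \ref{Hsdeformation}, substitute $\Vert f(x)-f(y)\Vert_2\asymp\sqrt{d(x,y)}$ into its inequalities, and use the asymptotics of $h_{(a,b)}$ to read off the stated uniform and coarse deformation gaps in each of the three ranges of $q$. The bookkeeping in all three cases checks out, and your remark on replacing $\ell_2$ by a possibly nonseparable Hilbert space (with target $\ell_q(\Gamma)$) is a legitimate refinement the paper leaves tacit.
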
 

It follows from the embedding theorem of Assouad \cite{Assouad1983} that a doubling metric is a metric of Lipschitz-negative type and Assouad's embedding provides strong embeddings into the Banach spaces $\ell_q$ with strong deformation gap $(t,t)$ which are better than the gaps in Corollary \ref{lipneg}. However it is not clear if one can get better estimates in the zone $0<q<1$. An interesting example of a non-uniformly discrete and unbounded metric of Lipschitz negative type is the (3-dimensional) real Heisenberg group equipped with its intrinsic Carnot-Carath\'{e}odory metric. Indeed in \cite{LeeNaor2006} it is shown that the real Heisenberg group admits an equivalent metric of negative type.

\subsection{A purely coarse embedding}\ \\
Recall the original Dadarlat-Guentner criterion.
\begin{proposition}[Dadarlat-Guentner \cite{DadarlatGuentner2003}] Let $X$ be a metric space.  Then $X$ is coarsely embeddable into a Hilbert space if and only if for every $R>0$ and $\epsilon>0$ there exists a Hilbert space valued map $\xi\colon X\to \mathcal{H}$ such that $\Vert \xi(x)\Vert=1$ for all $x\in X$ and such that 
\begin{enumerate}
\item $\omega_\xi(R)\le\epsilon$
\item $\lim_{t\to\infty}\rho_\xi(t)=2$
\end{enumerate}
\end{proposition}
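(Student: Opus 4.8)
The plan is to prove the two directions separately, with the nontrivial content being the ``if'' direction. The ``only if'' direction is a warm-up: if $F\colon X\to\mathcal H$ is a coarse embedding, then after composing with a translation we may assume $F(x_0)=0$ for a basepoint, and we wish to manufacture \emph{normalized} maps that oscillate little on scale $R$ while eventually separating points by almost $\sqrt2$. The standard trick is to pass to the Gaussian / Schoenberg functor already used in Lemma~\ref{schoenberg}: for each $s>0$ there is a Hilbert space $\mathcal H_s$ and $\psi_s\colon\mathcal H\to S_{\mathcal H_s}$ with $\|\psi_s(u)-\psi_s(v)\|^2=2\bigl(1-e^{-s\|u-v\|^2}\bigr)$. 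Set $\xi=\psi_s\circ F$ for a suitably small $s=s(R,\epsilon)$; then $\|\xi(x)\|=1$ always, $\omega_\xi(R)^2\le 2(1-e^{-s\,\omega_F(R)^2})\le 2s\,\omega_F(R)^2\le\epsilon^2$ once $s$ is small enough, and since $\rho_F(t)\to\infty$ we get $\rho_\xi(t)^2\ge 2(1-e^{-s\rho_F(t)^2})\to 2$ as $t\to\infty$. So (1) and (2) hold.

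For the ``if'' direction the plan follows Dadarlat--Guentner's original gluing argument. Fix an increasing sequence of scales $R_k\uparrow\infty$ and a summable sequence $\epsilon_k\downarrow 0$ (say $\epsilon_k=2^{-k}$), and for each $k$ obtain from the hypothesis a normalized map $\xi_k\colon X\to\mathcal H_k$ with $\omega_{\xi_k}(R_k)\le\epsilon_k$ and $\rho_{\xi_k}(t)\to 2$. Fix also a basepoint $x_0$. Because one cannot directly take an $\ell_2$-sum of the $\xi_k$ (each individual term has norm $1$, so the sum diverges), the key device is to recenter: replace $\xi_k$ by $\eta_k(x)=\xi_k(x)-\xi_k(x_0)$, which now has $\|\eta_k(x)\|=\|\xi_k(x)-\xi_k(x_0)\|\le\omega_{\xi_k}(d(x,x_0))$. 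For a fixed $x$, pick $k_0$ with $R_{k_0}\ge d(x,x_0)$; then for all $k\ge k_0$ one has $\|\eta_k(x)\|\le\epsilon_k$, so $\sum_k\|\eta_k(x)\|^2<\infty$ and the map $\Phi(x)=(\eta_k(x))_{k\ge1}\in\bigl(\bigoplus_k\mathcal H_k\bigr)_2$ is well defined. This is exactly the mechanism formalized in Lemma~\ref{stronggluing}, with $\MM=X$, $X_n=\mathcal H_n$, $\varphi_n=\xi_n$, $q=2$; one is simply invoking that lemma (or re-running its short computation) in the present normalized setting.

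It remains to check that $\Phi$ is a coarse embedding. For the expansion bound, given $x,y$ with $d(x,y)\le t$, split the sum at the first index $k_1$ with $R_{k_1}\ge t$: the tail $\sum_{k\ge k_1}\|\xi_k(x)-\xi_k(y)\|^2\le\sum_{k\ge k_1}\epsilon_k^2<\infty$ is controlled, and the finitely many initial terms are each bounded by the (finite) diameter-type quantity $2$, so $\omega_\Phi(t)<\infty$ for every $t$; in fact one should organize the $R_k,\epsilon_k$ so that $\omega_\Phi$ is finite-valued, which is all that is required. For the compression bound, fix $k$; since $\rho_{\xi_k}(t)\to 2$ there is $T_k$ with $\|\xi_k(x)-\xi_k(y)\|\ge 1$ whenever $d(x,y)\ge T_k$, and therefore $\|\Phi(x)-\Phi(y)\|^2\ge\sum_{k\,:\,T_k\le d(x,y)}1$, which tends to $\infty$ as $d(x,y)\to\infty$ provided one chooses the $\xi_k$'s successively so that $T_k<T_{k+1}<\cdots$ increase to $\infty$ (possible because at each stage we are free to pick $\xi_k$ after seeing $T_1,\dots,T_{k-1}$). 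Hence $\rho_\Phi(t)\to\infty$ and $\Phi$ is a coarse embedding into the Hilbert space $\bigl(\bigoplus_k\mathcal H_k\bigr)_2$.

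\textbf{Main obstacle.} The only delicate point is the ``well-definedness/finite expansion'' bookkeeping: one must choose the scales $R_k$, the tolerances $\epsilon_k$, and (for the compression) the thresholds $T_k$ in a mutually compatible, inductive way so that the centered sum $\sum_k\|\xi_k(x)-\xi_k(x_0)\|^2$ converges for every $x$, the finite partial sums stay bounded on each ball, and the separation thresholds $T_k\to\infty$. Once this scheme is fixed the estimates are the routine ones already encapsulated in Lemma~\ref{stronggluing}; the recentering by $\xi_k(x_0)$ is the conceptual heart of the argument, since it converts a family of unit-sphere-valued maps (whose naive direct sum is not $\ell_2$) into genuinely $\ell_2$-valued data without destroying either the small oscillation on bounded sets or the eventual separation of distant points.
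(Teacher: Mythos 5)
Your overall strategy is the standard Dadarlat--Guentner argument, and it is exactly the machinery this paper builds around the criterion (which it cites without proof): the ``only if'' direction is the Gaussian/Schoenberg device of Lemma~\ref{schoenberg}, and the ``if'' direction is the recentred $\ell_2$-gluing of Lemma~\ref{coarsegluing}/Proposition~\ref{Bembeddingc} with $q=2$ and $\eta=1$. The ``if'' half of your write-up is correct: well-definedness of $\Phi$, finiteness of $\omega_\Phi(t)$ for each $t$, and $\rho_\Phi(t)\to\infty$ all check out. (Two harmless remarks: the inductive choice of the thresholds $T_k$ is unnecessary, since for each fixed $m$ the numbers $T_1,\dots,T_m$ are finite and $\|\Phi(x)-\Phi(y)\|^2\ge m$ as soon as $d(x,y)\ge\max_{k\le m}T_k$; and the finitely many initial terms are bounded by $2$ in norm, hence by $4$ after squaring.)

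The one genuine issue is in the ``only if'' direction, at the sentence ``so (1) and (2) hold''. Your computation gives $\rho_\xi(t)^2\ge 2\bigl(1-e^{-s\rho_F(t)^2}\bigr)\to 2$, i.e. $\rho_\xi(t)\to\sqrt{2}$, not $\rho_\xi(t)\to 2$ as condition (2) literally demands. Moreover the literal condition is unattainable for a general unbounded space: if $X$ contains, for every $t$, three points pairwise at distance at least $t$ (e.g. $X=\Z$), then three unit vectors $u,v,w$ cannot be pairwise almost antipodal, because $\|u+v+w\|^2\ge 0$ forces $\langle u,v\rangle+\langle u,w\rangle+\langle v,w\rangle\ge -3/2$, hence the smallest of the three squared mutual distances is at most $3$, so $\rho_\xi(t)\le\sqrt{3}<2$ for all $t$. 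Thus condition (2) must be read as in the original Dadarlat--Guentner formulation, namely $\sup\{|\langle\xi(x),\xi(y)\rangle|:d(x,y)\ge t\}\to 0$, equivalently $\rho_\xi(t)^2\to 2$; with that normalization your Gaussian construction does verify it, and your ``if'' direction only uses that $\rho_\xi(t)$ is eventually bounded below by $1$, so the equivalence goes through unchanged. You should state this normalization explicitly (or prove the inner-product version) rather than asserting (2) verbatim after having established the $\sqrt{2}$ bound.
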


\begin{lemma}\label{coarsegluing} Let $(\MM,d)$ be a metric space. Suppose that $(X_n,\delta_n)$, $n\ge 1$ is a translation invariant metric linear spaces and suppose further that one can find $\eta>0$, sequences $(r_n)_{n\ge 1}$, $(\epsilon_n)_{n\ge 1}$, $(s_n)_{n\ge 1}$ and maps $\varphi_n\colon \MM\to S_{X_n}\colon=\{x\in X_n;\vert x\vert=1\}$ satisfying 
\begin{enumerate}
\item $(\epsilon_n)_{n\ge 1}$ is $q$-summable; $(r_n)_{n\ge 1}$ and $(s_n)_{n\ge 1}$ are nondecreasing and unbounded\\
\item $\delta_n(\varphi_n(x),\varphi_n(y))\le \epsilon_n$ whenever $d(x,y)\le r_n$\\
\item $\delta_n(\varphi_n(x),\varphi_n(y))\ge \eta$ whenever $d(x,y)\ge s_n$
\end{enumerate}

\medskip

Fix a base point $t_0\in\MM$ and define the map $\phi\colon\MM\to \left(\sum_{n=1}^\infty X_n\right)_{q}$ by $\phi(x)=(\varphi_n(x)-\varphi_n(t_0))_{n\ge 1}$. Then there exists a constant $K> 0$ such that the following inequalities hold:

\medskip

\begin{itemize}

\item if $0<q\le 1$.
\begin{equation*}
\Delta_q(\phi(x),\phi(y))\le 2^qk+K \textrm{ whenever }r_k\le d(x,y)\le r_{k+1}
\end{equation*} 

and
\begin{equation*}
\Delta_q(\phi(x),\phi(y))\ge k\eta^q \textrm{ whenever }s_k\le d(x,y)\le s_{k+1}.
\end{equation*}

\item if $q\ge 1$.
\begin{equation*}
\Delta_q(\phi(x),\phi(y))\le 2k^{1/q}+K^{1/q} \textrm{ whenever }r_k\le d(x,y)\le r_{k+1}
\end{equation*} 

and
\begin{equation*}
\Delta_p(\phi(x),\phi(y))\ge \eta k^{1/q} \textrm{ whenever }s_k\le d(x,y)\le s_{k+1}.
\end{equation*}
\end{itemize}
\end{lemma}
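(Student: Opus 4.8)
The plan is to mimic almost verbatim the proof of Lemma \ref{stronggluing}, the only new ingredient being the upper estimates, which come from the hypothesis $\delta_n(\varphi_n(x),\varphi_n(y))\le\epsilon_n$ for $d(x,y)\le r_n$ together with the fact that the maps $\varphi_n$ take values in the unit sphere of $X_n$ (so $\delta_n(\varphi_n(x),\varphi_n(y))\le 2$ always, by translation invariance and the triangle inequality). First I would check that $\phi$ is well defined: since $\varphi_n(x),\varphi_n(t_0)\in S_{X_n}$ we have $\delta_n(\varphi_n(x)-\varphi_n(t_0),0)=\delta_n(\varphi_n(x),\varphi_n(t_0))\le 2$, which is not $q$-summable, so instead I fix $x$ and split the sum at the index $k$ with $r_k\le d(x,t_0)\le r_{k+1}$: the first $k$ terms are each at most $2^q$ and for $n>k$ we have $d(x,t_0)\le r_n$ hence $\delta_n(\varphi_n(x),\varphi_n(t_0))^q\le\epsilon_n^q$, so $\vert\phi(x)\vert_q\le 2^q k+\sum_{n>k}\epsilon_n^q<\infty$. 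This is the computation that produces the additive constant $K=\sum_{n\ge1}\epsilon_n^q$ (or its $q$-th root in the normed case).

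Next, for the upper estimate on $\Delta_q(\phi(x),\phi(y))$, I would use translation invariance to write $\delta_n(\varphi_n(x)-\varphi_n(t_0),\varphi_n(y)-\varphi_n(t_0))=\delta_n(\varphi_n(x),\varphi_n(y))$, then given $r_k\le d(x,y)\le r_{k+1}$ split the series at $k$ exactly as above: the first $k$ terms contribute at most $k\cdot 2^q$ and the tail contributes at most $\sum_{n>k}\epsilon_n^q\le K$. In the case $0<q\le1$ this gives directly $\Delta_q(\phi(x),\phi(y))\le 2^q k+K$; in the case $q\ge1$ one takes $q$-th roots and uses subadditivity of $t\mapsto t^{1/q}$ (or the inequality $(a+b)^{1/q}\le a^{1/q}+b^{1/q}$) to get $\Delta_q(\phi(x),\phi(y))\le (2^q k)^{1/q}+K^{1/q}=2k^{1/q}+K^{1/q}$. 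The lower estimates are identical to those already proved in Lemma \ref{stronggluing}: if $s_k\le d(x,y)\le s_{k+1}$ then for $n\le k$ we have $d(x,y)\ge s_n$, hence $\delta_n(\varphi_n(x),\varphi_n(y))\ge\eta$, so $\sum_n\delta_n(\varphi_n(x),\varphi_n(y))^q\ge k\eta^q$, which is the claim when $0<q\le1$ and becomes $\eta k^{1/q}$ after taking $q$-th roots when $q\ge1$.

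There is no real obstacle here; the only point requiring a little care is the bookkeeping with the splitting index, since unlike Lemma \ref{stronggluing} the individual terms $\delta_n(\varphi_n(x),\varphi_n(y))^q$ are no longer uniformly summable and one genuinely needs the hypothesis that $(r_n)$ is unbounded to guarantee that for each pair $x,y$ only finitely many indices fall in the ``bad'' range $d(x,y)>r_n$. I would also remark at the end that these inequalities yield a coarse embedding (when $(r_n)$ and $(s_n)$ are comparable in the appropriate sense, e.g. both growing polynomially, so that $\omega_\phi(t)<\infty$ for all $t$ and $\rho_\phi(t)\to\infty$), but that in general it is only a coarse embedding and not a strong one, since no control on small distances is assumed — this is exactly the point of calling it ``a purely coarse embedding.''
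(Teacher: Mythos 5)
Your proof is correct and is exactly the argument the paper intends: the paper states Lemma \ref{coarsegluing} without proof, leaving it as the natural adaptation of Lemma \ref{stronggluing}, and your splitting of the series at the index $k$ with $r_k\le d(x,y)\le r_{k+1}$ (bounding the first $k$ terms by $2^q$ via the unit-sphere condition and the tail by $K=\sum_n\epsilon_n^q$, then taking $q$-th roots when $q\ge 1$) is precisely what produces the stated constant $K$, while your lower bound coincides with the one already proved there. No gaps.
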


The next two propositions are analogues of Proposition \ref{Fembeddings} and Proposition \ref{Bembeddings} .

\begin{proposition}\label{Bembeddingc}
Let $q\ge 1$. Assume that there are $\delta> 0$ and sequences as in Lemma \ref{coarsegluing}. Then there is a coarse embedding  $\phi$ from $\MM$ into $\left(\sum_{n=1}^\infty X_n\right)_{q}$ so that $$s^-(d(x,y)))^{1/q}\lesssim_l \Delta_q(\phi(x),\phi(y))\lesssim_l r^-(d(x,y))^{1/q}.$$ 
\end{proposition}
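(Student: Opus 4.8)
The plan is to deduce Proposition \ref{Bembeddingc} directly from Lemma \ref{coarsegluing}, exactly as Proposition \ref{Bembeddings} was deduced from Lemma \ref{stronggluing}. First I would record the upper estimate: Lemma \ref{coarsegluing} (case $q\ge 1$) gives $\Delta_q(\phi(x),\phi(y))\le 2k^{1/q}+K^{1/q}$ whenever $r_k\le d(x,y)\le r_{k+1}$. Since $r_k\le d(x,y)$ forces $k\le r^-(d(x,y))$ (by the definition of the generalized inverse of the piecewise-linear extension $r$ of the sequence $(r_n)$), this yields $\Delta_q(\phi(x),\phi(y))\le 2\,r^-(d(x,y))^{1/q}+K^{1/q}\lesssim r^-(d(x,y))^{1/q}$ as long as $d(x,y)$ is large enough (i.e. at least $r_1$, so that such a $k$ exists); this is the content of the $\lesssim_l$ on the right. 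For the lower estimate, Lemma \ref{coarsegluing} gives $\Delta_q(\phi(x),\phi(y))\ge \eta\,k^{1/q}$ whenever $s_k\le d(x,y)\le s_{k+1}$, and here $s_k\le d(x,y)$ together with the monotonicity of $s$ gives $k\ge s^-(d(x,y))-1\gtrsim s^-(d(x,y))$ once $d(x,y)$ is large; hence $\Delta_q(\phi(x),\phi(y))\gtrsim s^-(d(x,y))^{1/q}$ for large distances. Combining the two displays gives the asserted chain $s^-(d(x,y))^{1/q}\lesssim_l \Delta_q(\phi(x),\phi(y))\lesssim_l r^-(d(x,y))^{1/q}$.

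It then remains to check that $\phi$ is genuinely a coarse embedding, i.e. that $\rho_\phi(t)\to\infty$ as $t\to\infty$ and $\omega_\phi(t)<\infty$ for every $t>0$. The first follows because $(s_n)$ is unbounded, so $s^-(d(x,y))\to\infty$ as $d(x,y)\to\infty$, and the lower bound above shows $\rho_\phi$ dominates (a constant times) $s^-(\cdot)^{1/q}$ for large arguments. The second follows because $(r_n)$ is unbounded: for any fixed $t>0$ there is $k$ with $r_k\le t<r_{k+1}$ (or $t<r_1$, in which case $\Delta_q(\phi(x),\phi(y))\le\big(\sum_n(2\epsilon_n)^q\big)^{1/q}<\infty$ using assumption (2) with the $q$-summability of $(\epsilon_n)$), and the upper estimate of Lemma \ref{coarsegluing} bounds $\omega_\phi(t)$ by $2k^{1/q}+K^{1/q}<\infty$. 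One should also note at the outset that $\phi$ is well defined into $\big(\sum X_n\big)_q$, which again comes from $\vert\phi(x)\vert_q^q=\sum_n\delta_n(\varphi_n(x),\varphi_n(t_0))^q\le\sum_n(2\epsilon_n)^q<\infty$ by the boundedness of the sphere-valued maps $\varphi_n$ and $q$-summability of $(\epsilon_n)$ — or more directly from assumption (2) once $d(x,t_0)\le r_k$ for suitable $k$, giving $\sum_{n\ge k}\epsilon_n^q$ plus finitely many bounded terms.

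The argument is essentially bookkeeping once Lemma \ref{coarsegluing} is in hand; there is no real obstacle. The one point that requires a little care is the precise meaning of $r^-$ and $s^-$: the sequences $(r_n),(s_n)$ are defined only on $\N$, so one must first extend them to continuous nondecreasing functions $r,s\colon[1,\infty)\to\R$ by linear interpolation (as the paper does just before Proposition \ref{Bembeddings} with the notation $s^-$ for "the generalized inverse function of the canonical continuous function obtained by piecewise extension of $s(n)$") and then use $r^-,s^-$ for the generalized inverses of those; the inequalities $k\le r^-(d(x,y))$ and $k\ge s^-(d(x,y))-1$ are immediate from this setup and the defining property $T^-(y)=\inf\{x:T(x)\ge y\}$. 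A second minor subtlety is that all lower bounds are only claimed for $d(x,y)$ large (hence the subscript $l$ in $\lesssim_l$ on the left), and the upper bound in terms of $r^-$ likewise needs $d(x,y)\ge r_1$ for the relevant index $k$ to exist — for smaller distances one uses instead the crude uniform bound $\Delta_q(\phi(x),\phi(y))\le\big(\sum_n(2\epsilon_n)^q\big)^{1/q}$, which is harmless since it only affects a bounded range and $\lesssim_l$ ignores small distances anyway.
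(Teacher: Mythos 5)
Your proof is correct and follows essentially the same route as the paper's: apply Lemma \ref{coarsegluing} and translate the index $k$ into $r^-(d(x,y))$ and $s^-(d(x,y))$ via the generalized inverses of the piecewise extensions, with the lower bound only claimed for large distances; your extra checks of well-definedness and of the coarse-embedding conditions are harmless additions the paper leaves implicit. The only quibble is your parenthetical "$r_k\le d(x,y)$ forces $k\le r^-(d(x,y))$", which can fail when $r_k=d(x,y)$ and the sequence repeats values, but this is repaired by choosing the smallest admissible $k$ (the Lemma's estimate holds for every admissible $k$), which is exactly the level of precision of the paper's own argument.
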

\begin{proof}
It follows from Lemma \ref{coarsegluing} that $$\Delta_q(\phi(x),\phi(y))\le 2k^{1/q}+K^{1/q} \textrm{ whenever } r(k)\le d(x,y)\le r(k+1)$$
Hence $\Delta_q(\phi(x),\phi(y))\lesssim r^-(d(x,y))^{1/q}$ whenever $d(x,y)$ is large enough. On the other hand,
$$\Delta_q(\phi(x),\phi(y))\ge \eta k^{1/q} \textrm{ whenever }s_k\le d(x,y)\le s_{k+1},$$
but $k\ge s^-(d(x,y))-1\gtrsim s^-(d(x,y))$ whenever $d(x,y)$ is large enough. \\It implies that $\Delta_q(\phi(x),\phi(y))\gtrsim s^-(d(x,y))^{1/q}.$
\end{proof}

\begin{proposition}\label{Fembeddingc}
Let $0<q\le 1$. Assume that there are $\delta> 0$ and sequences as in Lemma \ref{coarsegluing}. Then there is a coarse embedding  $\phi$ from $\MM$ into $\left(\sum_{n=1}^\infty X_n\right)_{q}$ so that

$$s^-(d(x,y))\lesssim_l \Delta_q(\phi(x),\phi(y))\lesssim_l r^-(d(x,y)).$$
In particular, if $(s_n)_{n\ge 1}$ and $(r_n)_{n\ge 1}$ grow linearly, $\phi$ is a Lipschitz embedding for large distances.  
\end{proposition}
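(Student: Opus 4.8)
The plan is to follow, essentially verbatim, the argument used for Proposition \ref{Bembeddingc}, the only change being that for $0<q\le1$ the distance $\Delta_q$ on $\bigl(\sum_n X_n\bigr)_q$ is the \emph{sum} $\sum_n\delta_n^q$ rather than its $q$-th root, so the exponents $1/q$ appearing there disappear.

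First I would observe that $\phi$ is well defined, exactly as in the proof of Lemma \ref{stronggluing}: for $n$ large one has $r_n\ge d(x,t_0)$, hence $\delta_n(\varphi_n(x),\varphi_n(t_0))\le\epsilon_n$, while the finitely many remaining terms are bounded since each $\varphi_n$ takes values in $S_{X_n}$, so $\vert\phi(x)\vert_q<\infty$ by $q$-summability of $(\epsilon_n)$. Next I would feed Lemma \ref{coarsegluing} (case $0<q\le1$) into the picture: there is $K>0$ with $\Delta_q(\phi(x),\phi(y))\le 2^q k+K$ whenever $r_k\le d(x,y)\le r_{k+1}$, and $\Delta_q(\phi(x),\phi(y))\ge\eta^q k$ whenever $s_k\le d(x,y)\le s_{k+1}$. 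For the upper estimate, if $r_k\le d(x,y)\le r_{k+1}$ then, translating the index $k$ into the generalized inverse of the piecewise-linear extension of $(r_n)$ as in Proposition \ref{Bembeddingc}, one gets $k\lesssim r^-(d(x,y))$ once $d(x,y)$ is large, hence $\Delta_q(\phi(x),\phi(y))\le 2^q k+K\lesssim r^-(d(x,y))$. For the lower estimate, if $s_k\le d(x,y)\le s_{k+1}$ then $k\ge s^-(d(x,y))-1\gtrsim s^-(d(x,y))$ for $d(x,y)$ large, so $\Delta_q(\phi(x),\phi(y))\ge\eta^q k\gtrsim s^-(d(x,y))$; together these give the displayed double inequality.

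I would then check that $\phi$ really is a coarse embedding. The expansion modulus is finite at every scale: for $d(x,y)<r_1$ one has $\delta_n(\varphi_n(x),\varphi_n(y))\le\epsilon_n$ for all $n$, so $\Delta_q(\phi(x),\phi(y))\le\sum_n\epsilon_n^q<\infty$, and for $r_k\le d(x,y)\le r_{k+1}$ the bound $2^q k+K$ is finite; and the compression modulus tends to infinity because $(s_n)$ is unbounded, forcing $s^-(d(x,y))\to\infty$, to which the lower estimate applies. The ``in particular'' is then immediate: linear growth of $(s_n)$ and $(r_n)$ gives linear growth of $s^-$ and $r^-$, so the displayed estimates become $d(x,y)\lesssim\Delta_q(\phi(x),\phi(y))\lesssim d(x,y)$ for $d(x,y)$ large, i.e.\ $\phi$ is bi-Lipschitz for large distances. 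I do not foresee any genuine difficulty; the only mildly delicate points are the passage from the integer index $k$ to the generalized inverse $r^-$ (resp.\ $s^-$) of the piecewise-linear interpolant, which is handled exactly as in Proposition \ref{Bembeddingc}, and the separate elementary treatment of distances below $r_1$ needed to conclude that the expansion modulus is finite everywhere.
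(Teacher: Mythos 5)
Your proposal is correct and follows essentially the same route as the paper: apply the two estimates of Lemma \ref{coarsegluing} in the case $0<q\le 1$ and convert the index $k$ into the generalized inverses $r^-$ and $s^-$ exactly as in Proposition \ref{Bembeddingc}. The extra details you supply (well-definedness of $\phi$, finiteness of the expansion modulus below $r_1$, and the bi-Lipschitz-for-large-distances conclusion under linear growth) are routine verifications that the paper leaves implicit.
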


\begin{proof}
It follows from Lemma \ref{coarsegluing} that $$\Delta_q(\phi(x),\phi(y))\le 2^qk+K \textrm{whenever }r_k\le d(x,y)\le r_{k+1}$$
Hence $\Delta_q(\phi(x),\phi(y))\lesssim r^-(d(x,y))$ whenever $d(x,y)$ is large enough. On the other hand,
$$\Delta_q(\phi(x),\phi(y))\ge k\eta^q \textrm{ whenever }s_k\le d(x,y)\le s_{k+1},$$
but $k\ge s^-(d(x,y))-1\gtrsim s^-(d(x,y))$ whenever $d(x,y)$ is large enough.
\end{proof}

The influence of the parameter $\delta$ and the sequence $(\epsilon_n)_{n\ge 1}$ is weak in this construction. A careful choice of these parameters can decrease the constants involved at certain steps in the proofs and can improve the threshold for which the inequalities are valid. This purely coarse embedding gives different coarse deformation gaps and weaker lower bounds on the compression. Combining Proposition \ref{Bembeddingc} and Lemma \ref{Hmappings} it can be shown that it gives a coarse embedding from $\ell_2\to \ell_2 $ with coarse deformation gap $(t^{1/3},t^{1/2}]$. Indeed, choose the parameters to be $\eta=\ds\sqrt{\frac{2(e-1)}{e}}>0$, $r_n=n$, $\epsilon_n=n^{-\nu}$ for some $\nu>0$, $t_n=\ds\left(\frac{\epsilon_n}{r_n}\right)^2$ and  $s_n=\ds\frac{1}{\sqrt{t_n}}=\frac{r_n}{\epsilon_n}=n^{1+\nu}$ with $\nu>1/2$ to ensure summability. 
 
\bigskip

\begin{remark}Using the Mazur maps one obtains a coarse embedding from $\ell_2$ into $\ell_q$ which has the following coarse deformation gaps 
\begin{itemize}
\item[$\triangleright$] $(t^{\frac{q^2}{1+q^2}},t]$ if $0<q\le 1$\\
\item[$\triangleright$] $(t^{\frac{1}{1+q}},t^\frac{1}{q}]$ if $1\le q<2$
\item[$\triangleright$] $(t^{\frac{2}{3q}},t^\frac{1}{q}]$ if $1\le q<2$
\end{itemize}

Since $t^{1/q}\le t$ for $t\ge 1$, one gets lower estimates on the compression which are weaker than the one obtained in the preceding sections. Namely,
\begin{itemize}
\item[$\triangleright$] For $0<q\le 1$, $\alpha_{\ell_q}(\ell_2)\ge \frac{q^2}{1+q^2}$\\
\item[$\triangleright$] For $1\le q<2$, $\alpha_{\ell_q}(\ell_2)\ge \frac{1}{1+q}$\\
\item[$\triangleright$] For $q>2$, $\alpha_{\ell_q}(\ell_2)\ge \frac{2}{3q}$
\end{itemize}
\end{remark}

\subsection{Upper bounds on the Lebesgue-compressions of $\ell_p$}\label{upper bounds}\ \\

\smallskip

In this section two different techniques are used to compute upper bounds for the compression exponent. The first one relies on works from Kalton and Randrianarivony \cite{KaltonRandrianarivony2008} and requires the introduction of a special family of graphs. The second technique appeared in a paper of T. Austin \cite{Austin2011} and shows a delicate interplay between the distortion of certain graphs and the compression.

\subsubsection{Computing $\alpha_{\ell_q}(\ell_p)$}\ \\

Denote $G_k(\N)$ the set of $k$-element subsets of $\N$ equipped with the distance $\rho(A,B)=\frac{\vert A\Delta B\vert}{2}$.

\begin{theorem}[Kalton-Randrianarivony \cite{KaltonRandrianarivony2008}]\label{KR}
Let $Y$ be a reflexive Banach space so that there exists $r\in(1,\infty)$ with the following property. If $y\in Y$ and $(y_n)_{n\ge1}$ is a weakly null sequence in $Y$ then 
\begin{equation}\label{smooth}\lim\sup\Vert y+y_n\Vert^r\le \Vert y\Vert^r+\lim\sup\Vert y_n\Vert^r.
\end{equation}
Assume now the $\Mb$ is an infinite subset of $\N$ and $f\colon G_k(\Mb)\to Y$ is a Lipschitz map. Then for any $\epsilon>0$, there exists an infinite subset $\Mb'$ of $\Mb$ such that: $$\diam f(G_k(\Mb'))\le 2\omega_f(1)k^{1/r}+\epsilon.$$
\end{theorem}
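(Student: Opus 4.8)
The plan is to exploit the combinatorial structure of $G_k(\Mb)$ together with the smoothness hypothesis \eqref{smooth} via a Ramsey-type stabilization argument. First I would set up the right indexing: identify a $k$-subset of $\Mb$ with the strictly increasing $k$-tuple listing its elements, so that $f$ becomes a map on increasing $k$-tuples from $\Mb$. The key point is that if $\bar{n} = (n_1 < \dots < n_k)$ and $\bar{m} = (m_1 < \dots < m_k)$ are two such tuples that are ``interleaved'' appropriately — say $n_1 < m_1 < n_2 < m_2 < \dots$ or some fixed interleaving pattern — then $\rho(\bar n, \bar m)$ is controlled (in fact the symmetric difference is all of both sets when they are disjoint, giving distance $k$), while one can also find chains of tuples interpolating between them where consecutive tuples differ in exactly one coordinate, hence are at $\rho$-distance $1$.

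Second, I would apply a stabilization/Ramsey argument: by iterating Ramsey's theorem finitely many times on colorings of increasing tuples (coloring by which ``$\epsilon$-net cell'' the relevant differences $f(\bar n) - f(\bar m)$ land in), pass to an infinite subset $\Mb'$ on which $f$ is, up to $\epsilon$, ``spreading'' — i.e. the values $f$ takes depend in a stable way only on the order type of how coordinates sit relative to one another. On such a $\Mb'$ one can build, for any two $k$-tuples $\bar a, \bar b$ from $\Mb'$, a weakly null sequence: fixing $\bar a$ and letting the coordinates of a second tuple march off to infinity within $\Mb'$ produces a sequence of vectors $f(\bar b^{(j)}) - f(\bar a)$ that is weakly null (reflexivity gives a weakly convergent subsequence; the spreading property forces the limit to be the same regardless of how far out we go, and a telescoping/shift argument identifies the limit, allowing us to subtract it off and assume it is $0$). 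This is the step where reflexivity and \eqref{smooth} do the real work.

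Third, with the weakly null sequence in hand, I would telescope along a chain of $k$ single-coordinate moves from $\bar a$ to a far-away disjoint tuple $\bar b$. Writing $f(\bar b) - f(\bar a)$ as a sum of $k$ increments $u_1 + \dots + u_k$, each $u_i$ has norm at most $\omega_f(1)$ (consecutive tuples are at distance $1$), and by arranging the moves so that later increments form weakly null sequences relative to the partial sums, the hypothesis \eqref{smooth} applied $k$ times yields $\|f(\bar b) - f(\bar a)\|^r \lesssim \sum_{i=1}^k \|u_i\|^r \le k\,\omega_f(1)^r$, hence $\|f(\bar b)-f(\bar a)\| \le \omega_f(1) k^{1/r}$ up to the $\epsilon$ absorbed in the stabilization. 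Finally, since any two $k$-subsets of $\Mb'$ can be connected through a common disjoint ``far'' tuple, the triangle inequality upgrades this to the diameter bound $\diam f(G_k(\Mb')) \le 2\omega_f(1)k^{1/r} + \epsilon$.

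The main obstacle I anticipate is the second step: making precise the sense in which $f$ becomes ``spreading'' on $\Mb'$ and, in particular, controlling the weak limits so that they can be assumed zero — this requires a careful iterated Ramsey argument over the $\binom{k}{2}$-or-so order relations and a discretization of $Y$ into an $\epsilon$-net of a suitable bounded region (using $\omega_f(1) < \infty$), and the telescoping in step three must be organized so that the ``error'' coordinates genuinely generate weakly null sequences at each stage rather than merely bounded ones. The Lipschitz hypothesis enters only to guarantee $\omega_f(1) < \infty$ and that single-coordinate moves cost at most $\omega_f(1)$; everything else is the interplay of Ramsey combinatorics with the modulus of asymptotic smoothness encoded by \eqref{smooth}.
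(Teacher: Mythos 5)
First, note that the paper does not prove this statement at all: it is imported verbatim from Kalton--Randrianarivony \cite{KaltonRandrianarivony2008}, so the only comparison available is with their original argument. Your overall architecture (reflexivity to produce weak limits, a Ramsey extraction, iterated use of \eqref{smooth}, a final triangle inequality for the factor $2$) is in the right family, but the central step of your plan has a genuine gap. The increments along your chain of single-coordinate moves are not weakly null relative to fixed partial sums: to gain weak nullity you must send the new coordinates $b_1,\dots,b_k$ to infinity, but then every earlier increment changes too, and for fixed $b_1,\dots,b_{j-1}$ the sequence $f(b_1,\dots,b_{j-1},b_j)-f(\text{previous tuple})$ converges weakly to $\partial f(b_1,\dots,b_{j-1})-f(\text{previous tuple})$, where $\partial f(\bar n):=w\text{-}\lim_m f(\bar n,m)$; this limit is typically nonzero. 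You acknowledge this and propose to ``subtract it off and assume it is $0$'', but that destroys the telescoping identity $f(\bar b)-f(\bar a)=u_1+\cdots+u_k$: the correction terms are exactly differences between values of $f$ and of the weak-derivative map, and controlling them is the real content of the proof. The missing idea in Kalton--Randrianarivony is precisely to make $\partial f$ the object of study: after a diagonal extraction $\partial f$ exists on $G_{k-1}(\Mb_0)$, satisfies $\omega_{\partial f}(1)\le\omega_f(1)$ and $\Vert f(\bar n,m)-\partial f(\bar n)\Vert\le\omega_f(1)$ by weak lower semicontinuity (note $\partial f(\bar n)$ is not a value of $f$, so this bound is not ``consecutive tuples are at distance $1$''), and the theorem is proved by induction on $k$ applied to $\partial f$, using \eqref{smooth} once in each tuple's last coordinate at every level, followed by a Ramsey extraction.

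Because of this, your quantitative bookkeeping does not close either. Your route needs the one-sided estimate $\Vert f(\bar b)-f(\bar a)\Vert\le\omega_f(1)k^{1/r}+\epsilon$ for pairs with $\bar b$ far to the right of $\bar a$, which is then doubled through a common far tuple; neither the corrected argument nor anything in your sketch delivers that one-sided bound. What the induction actually yields is an estimate of the form $\limsup_u\limsup_v\Vert f(\bar n',u)-f(\bar m',v)\Vert^r\le\Vert\partial f(\bar n')-\partial f(\bar m')\Vert^r+2\,\omega_f(1)^r$, where the factor $2$ per level comes from applying \eqref{smooth} in \emph{both} tuples' last coordinates (equivalently, from centering at the iterated weak limit), not from a triangle inequality through a third tuple; combining this with the inductive diameter bound for $\partial f$ and a Ramsey argument at each level gives $\diam f(G_k(\Mb'))\le 2\omega_f(1)k^{1/r}+\epsilon$ directly, for all pairs of $k$-subsets of $\Mb'$, far or not. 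So the skeleton you propose can be repaired, but only by introducing the weak-derivative maps and restructuring the telescoping as an induction on $k$; as written, the inequality $\Vert f(\bar b)-f(\bar a)\Vert^r\lesssim\sum_i\Vert u_i\Vert^r$ is unjustified, and it is exactly the step you flagged as the anticipated obstacle.
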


The fact that equation \ref{smooth} is satisfied with $r=q$ for all the reflexive sequence spaces $\ell_q$ is the key point in estimating an upper bound for the compression.
\begin{corollary}[$\ell_q$-compression of $\ell_p$]\label{alphapq}\ \\
Let $1\le p<q<\infty$. Then $$\alpha_{\ell_q}(\ell_p)=\frac{p}{q}.$$

\noindent If $0<p\le 1<q<\infty$. Then $$\alpha_{\ell_q}(\ell_p)=\frac{1}{q}.$$
\end{corollary}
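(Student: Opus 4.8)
The plan is to establish the two exponent identities by combining the lower bounds already obtained with matching upper bounds coming from Theorem~\ref{KR}. The lower bounds $\alpha_{\ell_q}(\ell_p)\ge p/q$ for $1\le p<q$ and $\alpha_{\ell_q}(\ell_p)\ge 1/q$ for $0<p\le 1<q$ are precisely the estimates recorded just before Corollary~\ref{Hcompression} and in the paragraph following it (via the chain $\ell_p\coarse\ell_2\coarse\ell_q$, together with Corollary~\ref{Hcompression}). So the whole content of the corollary is the reverse inequality: no coarse (equivalently, bi-Lipschitz for large distances, since the domain is a Banach space) embedding of $\ell_p$ into $\ell_q$ can compress better than $t^{p/q}$ (resp.\ $t^{1/q}$).

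First I would fix a quasi-isometric (bi-Lipschitz for large distances) embedding $f\colon\ell_p\to\ell_q$ with compression function $\rho$, rescaled so that $\omega_f$ is $1$-Lipschitz, and I would feed into it the graphs $G_k(\N)$ from the statement of Theorem~\ref{KR}. The key geometric input is that $G_k(\N)$ embeds bi-Lipschitzly, with uniformly bounded distortion (in fact isometrically after scaling, or with a universal constant), into $\ell_p$ via $A\mapsto k^{-1/p}\sum_{i\in A}e_i$ or an appropriate normalization: two $k$-sets $A,B$ with $|A\triangle B|=2j$ are sent to vectors at $\ell_p$-distance $(2j)^{1/p}$, so distances range between a universal constant and $k^{1/p}$. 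Composing with $f$ gives a Lipschitz map $G_k(\N)\to\ell_q$ to which Theorem~\ref{KR} applies with $r=q$ (equation~(\ref{smooth}) holds in $\ell_q$ for $q\ge 1$, which is the ``key point'' the text flags). Hence on an infinite subset $\Mb'$ the image has diameter $\lesssim k^{1/q}$.

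The punchline is a comparison of two ``diameters''. Inside $G_k(\Mb')$ there are pairs $A,B$ at distance $k$ (disjoint $k$-sets), whose $\ell_p$-images are at distance $(2k)^{1/p}\asymp k^{1/p}$; applying the compression estimate, $\rho(c\,k^{1/p})\le\operatorname{diam}f(G_k(\Mb'))\lesssim k^{1/q}$. Letting $k\to\infty$ forces $\rho(t)\lesssim t^{p/q}$ up to constants and lower-order terms, i.e.\ the compression exponent is at most $p/q$. For the case $0<p\le 1$ the only change is the metric on $\ell_p$: the map $A\mapsto\sum_{i\in A}e_i$ now sends $A,B$ with $|A\triangle B|=2j$ to $d_p$-distance exactly $2j$, so distances range in $[2,2k]$; running the same argument gives $\rho(c\,k)\lesssim k^{1/q}$, hence exponent at most $1/q$. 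Combining with the lower bounds yields equality in both cases.

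The main obstacle — really the only nontrivial point beyond bookkeeping — is making sure Theorem~\ref{KR} is applied correctly: one must pass to the infinite subset $\Mb'$ \emph{after} composing with $f$ (so the Lipschitz constant is that of $f\circ\iota_k$, controlled by $\omega_f(1)$ uniformly in $k$), and one must exhibit, \emph{within} the reduced graph $G_k(\Mb')$, a pair of points realizing distance comparable to $k$ in $\ell_p$ — which is automatic since $\Mb'$ is infinite and hence contains two disjoint $k$-subsets. One also has to check that ``coarse embedding of a Banach space'' can be upgraded to ``bi-Lipschitz for large distances,'' but this is exactly the discussion in the Notation section (the expansion modulus of a coarse embedding on a Banach space is at most linear for large distances after rescaling), so no new work is needed. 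I would also remark that the argument in fact rules out quasi-isometric embeddings achieving the exponent, consistent with the Kalton--Randrianarivony non-embedding theorem quoted after Corollary~\ref{Hcompression}.
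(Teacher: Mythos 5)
Your argument is essentially the paper's proof: compose the canonical map $A\mapsto e_{u_1}+\dots+e_{u_k}$ from $G_k(\N)$ into $\ell_p$ with the assumed embedding, apply Theorem~\ref{KR} with $r=q$ to get $\diam (f\circ\varphi)(G_k(\Mb'))\lesssim k^{1/q}$, and compare this with the compression evaluated on two disjoint $k$-sets (giving $k^{\alpha/p}$ for $p\ge 1$, resp.\ $k^{\alpha}$ for $0<p\le 1$), the matching lower bounds being exactly the estimates from \cite{AlbiacBaudier2013} recalled after Corollary~\ref{Hcompression}. One cosmetic slip: this map is not of uniformly bounded distortion on $G_k(\N)$ when $p>1$ (it is a $1/p$-snowflake of the graph metric, with distortion growing like $k^{1-1/p}$), but your argument never uses that claim --- only the Lipschitz upper bound and the existence of pairs at $\ell_p$-distance $\asymp k^{1/p}$, exactly as in the paper.
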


\begin{proof} Let $f\colon \ell_p\to \ell_q$ such that $$d_p(x,y)^{\alpha}\lesssim \Vert f(x)-f(y)\Vert_q\lesssim d_p(x,y) \textrm{ whenever } d_p(x,y)\ge1.$$
Denote $(e_n)_{n\ge 1}$ the canonical basis of $\ell_p$ and consider the map $\varphi(u)=e_{u_1}+\dots+e_{u_k}$ where $u=(u_1,\dots,u_k)\in G_k(\N)$. It is clear that the map $\varphi$ is $2$-Lipschitz and $\varphi(G_k(\N))$ is a 1-discrete subset of $\ell_p$, therefore $\omega_{f\circ\varphi}(1)\lesssim 1$. By Theorem \ref{KR} there is an infinite subset $\Mb$ of $\N$ such the $\diam(f\circ \varphi)(G_k(\Mb))\lesssim k^{1/q}$. But for $p\ge 1$ $\diam(f\circ\varphi)(G_k(\Mb))\gtrsim \diam(\varphi(G_k(\Mb))^\alpha\gtrsim k^{\alpha/p}$. It implies that $\alpha\le p/q$. One uses the inequality $\diam(f\circ\varphi)(G_k(\Mb))\gtrsim k^{\alpha}$ for $0<p\le 1$. The equalities follow from \cite{AlbiacBaudier2013}.
\end{proof}

The same idea can be applied for functions spaces in some cases. Indeed, since equation \ref{smooth} holds with $r=q$ when $1<q\le 2$ for the function spaces $L_q$ one can get the upper estimate $\alpha_{L_q}(L_p)\le\alpha_{L_q}(\ell_p)\le\frac{p}{q}$ when $1\le p<q\le 2$. Note that $\alpha_{L_q}(L_p)\ge \frac{p}{q}$ from \cite{MendelNaor2004}, hence $\alpha_{L_q}(L_p)=\frac{p}{q}$. When $q\ge 2$ the equation \ref{smooth} is satisfied only with $r=2$ for the spaces $L_q$ and no upper bound better than one can be achieved for $p\ge 2$. Since $L_2$ embeds linearly isometrically into $L_q$ if $1\le p\le 2\le q$ one has $\alpha_{L_q}(L_p)=\frac{p}{2}$. Now the only estimate known in the case $2\le p\le q$ is $\frac{p}{q}\le \alpha_{L_q}(L_p)\le 1$. 

\mk

The same technique would also give the upper bound $\alpha_{L_q}(\ell_p)\le\frac{1}{\min\{2,q\}}$ for $0<p\le 1<q<\infty$. As one shall see there is an alternate path to prove this result.

\medskip

As previously mentioned it is possible to find upper bounds on the compression exponent for uniformly discrete metric spaces using Lemma 3.1 in Tim Austin's article \cite{Austin2011}. Recall that the \textit{distortion} of an embedding $f\colon \D\to \T$ is $$\textrm{dist}(f):=\ds\sup_{u\neq v\in \D}\frac{d_\D(u,v)}{d_\T(f(u),f(v))}\cdot\sup_{u\neq v\in \D}\frac{d_\T(f(u),f(v))}{d_\D(u,v)}$$ and $$c_\T(\D):=\sup\{\textrm{dist}(f); f\colon\D\to \T\}.$$ For our purposes the full power of Austin's lemma is not needed but only the following which is better suited for Banach spaces:

\begin{lemma}\label{Baustin}
Suppose that $X,Y$ are Banach spaces. Suppose further that we can find a sequence of finite 1-discrete metric spaces $(\NN_n,\delta_n)$ and embeddings $\iota_n\colon \NN_n\hookrightarrow X$ such that
\begin{itemize}
\item the $\NN_n$ are increasing in diameter, diam$(\NN_n,\delta_n)\to\infty$
\item the $\NN_n$ are embedded in $X$ with uniformly bounded distortion, i.e. there are some fixed $D\ge 1$ such that $$\delta_n(u,v)\le \Vert \iota_n(u)-\iota_n(v)\Vert_X\le  D\delta_n(u,v),\ \forall u,v\in\NN_n,\forall n\ge 1$$
\item the $\NN_n$ have bad distortion into $Y$, i.e. for some $\eta>0$ we have $c_Y(\NN_n,\delta_n)\gtrsim diam(\NN_n,\delta_n)^\eta$ for all $n\ge 1$
\end{itemize}
then, $$\alpha_Y(X)\le 1-\eta.$$
\end{lemma}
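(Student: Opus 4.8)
The plan is to prove the contrapositive: if $\alpha_Y(X) > 1 - \eta$, then one of the three hypotheses must fail, and in fact we will contradict the third (the bad distortion of $\NN_n$ into $Y$). So suppose $\alpha_Y(X) = \alpha > 1 - \eta$. By definition of the compression exponent there is an embedding $f \colon X \to Y$ and a scale $\tau > 0$ such that $\|u - v\|_X^{\alpha} \lesssim \|f(u) - f(v)\|_Y \lesssim \|u - v\|_X$ whenever $\|u - v\|_X \geq \tau$. After an initial rescaling of the domain (replacing $f$ by $x \mapsto f(\lambda x)$, which is harmless since the hypotheses on $\NN_n$ are about $1$-discrete spaces of growing diameter and we are free to dilate $X$) we may assume the large-distance inequalities hold for $\|u-v\|_X \geq 1$.

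First I would fix $n$ and form the composition $g_n := f \circ \iota_n \colon \NN_n \to Y$. Since $\iota_n$ has distortion at most $D$ with $\delta_n(u,v) \le \|\iota_n(u)-\iota_n(v)\|_X \le D\delta_n(u,v)$, and since $\NN_n$ is $1$-discrete, every pair of distinct points of $\iota_n(\NN_n)$ is at $X$-distance at least $1$, so the large-distance estimates for $f$ apply to all pairs. Thus for all $u \neq v \in \NN_n$,
\begin{equation*}
\|g_n(u) - g_n(v)\|_Y \lesssim \|\iota_n(u)-\iota_n(v)\|_X \le D\,\delta_n(u,v),
\end{equation*}
and
\begin{equation*}
\|g_n(u) - g_n(v)\|_Y \gtrsim \|\iota_n(u)-\iota_n(v)\|_X^{\alpha} \ge \delta_n(u,v)^{\alpha}.
\end{equation*}
Now I would turn this two-sided estimate into a bound on the distortion of $g_n$ as an embedding $\NN_n \hookrightarrow Y$. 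Writing $R_n := \diam(\NN_n,\delta_n)$, the lower bound gives $\|g_n(u)-g_n(v)\|_Y \gtrsim \delta_n(u,v)^{\alpha} = \delta_n(u,v)\cdot \delta_n(u,v)^{\alpha - 1} \gtrsim \delta_n(u,v)\cdot R_n^{\alpha-1}$ (using $\alpha \le 1$, so $\delta_n(u,v)^{\alpha-1} \ge R_n^{\alpha - 1}$). Combining with the upper bound, after normalizing $g_n$ by the constant $R_n^{1-\alpha}$ one obtains
\begin{equation*}
\delta_n(u,v) \lesssim R_n^{1-\alpha}\,\|g_n(u)-g_n(v)\|_Y \lesssim D\,R_n^{1-\alpha}\,\delta_n(u,v),
\end{equation*}
so $\operatorname{dist}(g_n) \lesssim R_n^{1-\alpha}$, whence $c_Y(\NN_n,\delta_n) \lesssim R_n^{1-\alpha}$.

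Finally I would confront this with the third hypothesis, $c_Y(\NN_n,\delta_n) \gtrsim R_n^{\eta}$. Together these force $R_n^{\eta} \lesssim R_n^{1-\alpha}$ for all $n$; since $R_n \to \infty$ by the first hypothesis, this is impossible once $\eta > 1 - \alpha$, i.e. once $\alpha > 1 - \eta$ — exactly our assumption. This contradiction shows $\alpha_Y(X) \le 1 - \eta$, as claimed. I do not anticipate a serious obstacle here; the only points requiring care are the bookkeeping of the implied constants (all absorbed into $\lesssim$, uniformly in $n$ because $D$, the constants from $f$, and $\eta$ are fixed) and the legitimacy of restricting the large-distance estimates of $f$ to all pairs in $\iota_n(\NN_n)$, which is precisely what $1$-discreteness of $\NN_n$ plus the lower distortion bound $\|\iota_n(u)-\iota_n(v)\|_X \ge \delta_n(u,v) \ge 1$ buys us after the harmless rescaling of $X$.
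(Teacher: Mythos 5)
Your argument is correct and is essentially the paper's proof: compose $f$ with $\iota_n$, use $1$-discreteness plus the lower distortion bound to apply the large-distance estimates to every pair, bound $\operatorname{dist}(f\circ\iota_n)\lesssim \operatorname{diam}(\NN_n)^{1-\alpha}$, and compare with $c_Y(\NN_n)\gtrsim \operatorname{diam}(\NN_n)^{\eta}$. The only quibble is that the supremum defining $\alpha_Y(X)$ need not be attained, so instead of taking $\alpha=\alpha_Y(X)$ you should fix an arbitrary $\alpha$ with $1-\eta<\alpha<\alpha_Y(X)$ (as the paper does, working with any $\alpha<\alpha_Y(X)$); this is a cosmetic fix that does not affect the argument.
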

\begin{proof}
We may assume that $\alpha_{Y}(X)>0$. Let $\alpha<\alpha_{Y}(X)$, and let $f\colon X\to Y$ such that $$\Vert x-y\Vert^{\alpha}\lesssim \Vert f(x)-f(y)\Vert\lesssim \Vert x-y\Vert \textrm{ whenever } \Vert x-y\Vert\ge1.$$

Since $1\le\delta_n(u,v)\le \Vert \iota_n(u)-\iota_n(v)\Vert_X$ for all $u,v\in\NN_n$ and $n\ge 1$ we have
\begin{align*}
\Vert \iota_n(u)-\iota_n(v)\Vert^{\alpha}&\lesssim \Vert f\circ\iota_n(u)-f\circ\iota_n(v)\Vert\lesssim \Vert\iota_n(u)-\iota_n(v)\Vert\\
\delta_n(u,v)^{\alpha}&\lesssim \Vert f\circ\iota_n(u)-f\circ\iota_n(v)\Vert\lesssim \delta_n(u,v)\\
\end{align*}

But \begin{align*}
\textrm{dist}(f\circ\iota_n)&=\ds\max_{u\neq v\in \NN_n}\frac{\delta_n(u,v)}{\Vert f\circ\iota_n(u)-f\circ\iota_n(v)\Vert}\cdot\max_{u\neq v\in \NN_n}\frac{\Vert f\circ\iota_n(u)-f\circ\iota_n(v)\Vert}{\delta_n(u,v)}\\
 & \lesssim \max_{u\neq v\in \NN_n}\delta_n(u,v)^{1-\alpha}\\
 & \lesssim diam(\NN_n,\delta_n)^{1-\alpha}\\
\end{align*}

Hence $diam(\NN_n,\delta_n)^{1-\alpha}\gtrsim c_Y(\NN_n,\delta_n)\gtrsim diam(\NN_n,\delta_n)^\eta$ and $\alpha\le 1-\eta.$
\end{proof}

The conclusion of Lemma \ref{Baustin} still holds in a more general setting, in particular if $X$ is the metric space $(\ell_p,d_p)$ for $0<p<1$. 
In order to bound the compression, let's say  $\alpha_{L_q}(L_2)$ for instance, the strategy is to find a sequence of bi-Lipschitz copies (the distortions being bounded from above by a universal constant) of graphs with increasing diameter inside $L_2$ with large distortion into $L_q$. The Hamming cubes equipped with an $\ell_p$-like metric will be used to bound from above the $L_q$-compression of the $L_p$-spaces. Let $H_m=\{0,1\}^m$ endowed with the $\ell_p$-distance $d_p$ for $0<p<\infty$. Enflo \cite{Enflo1969} gave a lower estimate on the Euclidean distortion of the Hamming cubes with the Hamming distance. Using what is now known as the Enflo-type and following exactly the same lines as Enflo's original proof one has: 
 
\begin{theorem}[Enflo]\label{Enflo}
Let $(\MM,d)$ be a metric space with Enflo-type $q$ then
$$c_{\MM}(H_m,d_p)\gtrsim diam(H_m)^{\frac{1}{p}-\frac{1}{q}},\ p\ge 1.$$
$$c_{\MM}(H_m,d_p)\gtrsim diam(H_m)^{1-\frac{1}{q}},\ 0<p\le 1.$$
\end{theorem}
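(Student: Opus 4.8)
The plan is to reproduce Enflo's classical argument for the Euclidean distortion of the Hamming cube, but formulated abstractly in terms of Enflo-type $q$ and applied to the cube $H_m=\{0,1\}^m$ equipped with the $\ell_p$-metric $d_p$ rather than the Hamming metric. Recall that a metric space $(\MM,d)$ has Enflo-type $q$ (with constant $T$) if for every $m$ and every function $g\colon\{-1,1\}^m\to\MM$ one has
\begin{equation*}
\sum_{\varepsilon\in\{-1,1\}^m} d\bigl(g(\varepsilon),g(-\varepsilon)\bigr)^q \;\le\; T^q\sum_{i=1}^m\sum_{\varepsilon\in\{-1,1\}^m} d\bigl(g(\varepsilon),g(\varepsilon^{(i)})\bigr)^q,
\end{equation*}
where $\varepsilon^{(i)}$ flips the $i$-th coordinate of $\varepsilon$. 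The first step is to identify $\{0,1\}^m$ with $\{-1,1\}^m$ and record the two elementary facts about the geometry of $(H_m,d_p)$: the distance between antipodal points is $d_p(\varepsilon,-\varepsilon)=m$ (equivalently $m^{1/p}$ raised to the $p$ inside the metric convention of the paper, so one must be careful here since for $0<p\le 1$ the paper's metric is $d_p(x,y)=\sum|x_i-y_i|^p$, giving $d_p(\varepsilon,-\varepsilon)=m$, whereas for $p\ge1$ it is $\|\varepsilon-(-\varepsilon)\|_p=m^{1/p}$), and the distance between neighbors is $1$ in either convention. Consequently $\diam(H_m,d_p)$ equals $m$ when $0<p\le1$ and $m^{1/p}$ when $p\ge1$.

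Next I would take an arbitrary embedding $f\colon H_m\to\MM$ with $\tfrac1L d_p(u,v)\le d(f(u),f(v))\le d_p(u,v)$ after rescaling, so $L=\operatorname{dist}(f)$, and apply the Enflo-type inequality to $g=f$. On the left-hand side every term is bounded below by $(\tfrac1L d_p(\varepsilon,-\varepsilon))^q$, and there are $2^m$ such terms; on the right-hand side every term is bounded above by $d_p(\varepsilon,\varepsilon^{(i)})^q=1$, and there are $m\cdot 2^m$ of them. Rearranging yields $L^q\gtrsim d_p(\varepsilon,-\varepsilon)^q/m$, i.e. $L\gtrsim d_p(\varepsilon,-\varepsilon)\,m^{-1/q}$. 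Substituting $d_p(\varepsilon,-\varepsilon)=m$ gives $L\gtrsim m^{1-1/q}=\diam(H_m,d_p)^{1-1/q}$ in the range $0<p\le1$; substituting $d_p(\varepsilon,-\varepsilon)=m^{1/p}$ gives $L\gtrsim m^{1/p-1/q}=\diam(H_m,d_p)^{(1/p-1/q)/(1/p)}\cdot(\text{adjustment})$, and one checks that $m^{1/p-1/q}=(\diam)^{1-p/q}$; here a small bookkeeping computation shows that the exponent $\tfrac1p-\tfrac1q$ of $m$ translates, after expressing things in terms of $\diam=m^{1/p}$, exactly into the claimed exponent — so it is cleanest to state the bound directly as a power of $\diam(H_m)$ as in the theorem statement. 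Taking the supremum over all embeddings $f$ gives $c_{\MM}(H_m,d_p)\gtrsim\diam(H_m)^{1/p-1/q}$ for $p\ge1$ and $\gtrsim\diam(H_m)^{1-1/q}$ for $0<p\le1$.

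The main obstacle is purely bookkeeping rather than conceptual: one must keep straight the two different metric conventions for $\ell_p$ used throughout this paper ($d_p=\sum|x_i-y_i|^p$ for $0<p\le1$ versus the genuine norm for $p\ge1$) and make sure the diameter and the antipodal distance are computed in the correct convention, since an error of a power here changes the final exponent. A secondary point worth a sentence is justifying the phrase ``following exactly the same lines as Enflo's original proof'': the only structural input needed beyond the Enflo-type inequality is that in the cube the antipodal points are joined by geodesics of combinatorial length $m$ built from coordinate flips, which is what makes the right-hand side of the type inequality a sum of $m\cdot 2^m$ edge-terms each of length $1$; no other feature of $\ell_p$ is used, so the argument is genuinely uniform in $p$. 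I would close by remarking that for $\MM=\ell_q$ (which has Enflo-type $\min\{2,q\}$) this recovers the bounds needed in the applications of Lemma~\ref{Baustin} to the Lebesgue-compression of $\ell_p$.
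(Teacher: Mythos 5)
Your proof is correct and is exactly the argument the paper points to without writing out (``following the same lines as Enflo's original proof''): apply the Enflo-type inequality to the embedded cube, bound the $2^m$ antipodal terms from below using the distortion and the $m\,2^m$ edge terms from above by a constant, and conclude $\mathrm{dist}(f)\gtrsim d_p(\varepsilon,-\varepsilon)\,m^{-1/q}$; note also that no rescaling of $f$ is available in a general metric target, but keeping the two constants $A,B$ with $AB=\mathrm{dist}(f)$ gives the same conclusion verbatim. The one wobble is your reconciliation in the case $p\ge 1$: what you prove is $c_{\MM}(H_m,d_p)\gtrsim m^{1/p-1/q}$, which matches the stated exponent under the reading $\diam(H_m)=m$ (the reading consistent with both displayed lines, and in the paper's only application, $0<p\le 1$, the $d_p$-diameter is indeed $m$), whereas relative to the $\ell_p$-diameter $m^{1/p}$ the exponent is $1-p/q$, not $\tfrac1p-\tfrac1q$, so your claim that the exponents ``translate exactly'' should be corrected — though since $m^{1/p-1/q}\ge (m^{1/p})^{1/p-1/q}$ for $1\le p\le q$, the inequality you establish is at least as strong as the stated one under either reading.
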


\begin{corollary}
For $0<p\le 1\le q<\infty$, $$\alpha_{L_q}(\ell_p)=\frac{1}{\min\{q,2\}}.$$
\end{corollary}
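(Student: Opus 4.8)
The plan is to prove the two inequalities $\alpha_{L_q}(\ell_p)\le \frac{1}{\min\{q,2\}}$ and $\alpha_{L_q}(\ell_p)\ge \frac{1}{\min\{q,2\}}$ separately for $0<p\le 1\le q<\infty$. For the lower bound, I would note that $\ell_p$ coarsely (indeed strongly) embeds into $\ell_q$ with compression exponent $\alpha_{\ell_q}(\ell_p)$ already recorded above, and that $\ell_q$ (or $\ell_2$) embeds linearly isometrically into $L_q$, so one can chain embeddings via $\alpha_Z(X)\ge\alpha_Y(X)\alpha_Z(Y)$. Concretely: if $q\ge 2$, use $\ell_p\to\ell_q\to L_q$ together with $\alpha_{\ell_q}(\ell_p)\ge \frac1q$ (from Corollary \ref{alphapq} direction, or the strong embedding of Corollary \ref{Fstrongpq} and its successors) to get $\alpha_{L_q}(\ell_p)\ge \frac1q$; if $1\le q\le 2$, instead route through $L_q$ directly, using that $L_q$ contains an isometric copy of $\ell_2$ and the half-snowflake bi-Lipschitz embedding of $(\ell_2,\|\cdot\|_2^{1/2})$-type behavior, or simply that the compression of $\ell_p$ into $L_q$ for $1\le q\le 2$ is already $\tfrac1q$ by the classical fact that every metric space embeds coarsely into $\ell_q$ via the snowflake when $q\le 2$. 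The cleanest statement is that the strong embeddings constructed in Theorem \ref{Hsdeformation} and Corollary \ref{Lplp} give $\alpha_{L_q}(\ell_p)\ge \frac{1}{\min\{q,2\}}$.

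For the upper bound, the main tool is Lemma \ref{Baustin} combined with Enflo's Theorem \ref{Enflo}. First I would recall that $L_q$ has Enflo-type $\min\{q,2\}$ — for $1\le q\le 2$ this is classical Enflo-type $q$, and for $q\ge 2$ the spaces $L_q$ have Enflo-type $2$ (and no better, since $L_q\supseteq \ell_2$). Next, I need a sequence of finite $1$-discrete metric spaces sitting with uniformly bounded distortion inside the metric space $(\ell_p,d_p)$ and having bad distortion into $L_q$. The natural candidates are the Hamming cubes $H_m=\{0,1\}^m$ with the metric $d_p$: the inclusion $H_m\hookrightarrow\ell_p$ sending a binary string to the corresponding finitely supported $0/1$ sequence is an isometry for the metric $d_p$ (since $|a_i-b_i|^p=|a_i-b_i|$ for $0/1$ entries, independently of $p$), so the distortion bound $D=1$ holds and $\operatorname{diam}(H_m,d_p)=m\to\infty$. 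By Theorem \ref{Enflo} applied to $\MM=L_q$ with Enflo-type $\min\{q,2\}$, since $0<p\le1$ we get $c_{L_q}(H_m,d_p)\gtrsim \operatorname{diam}(H_m)^{1-\frac{1}{\min\{q,2\}}}$, so Lemma \ref{Baustin} applies with $\eta = 1-\frac{1}{\min\{q,2\}}$, yielding $\alpha_{L_q}(\ell_p)\le 1-\eta = \frac{1}{\min\{q,2\}}$.

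Combining the two bounds gives the equality. I would be careful to invoke the remark made right after Lemma \ref{Baustin} that its conclusion persists when $X=(\ell_p,d_p)$ for $0<p<1$, which is exactly the regime needed here; the proof of Lemma \ref{Baustin} only used the triangle inequality and the defining inequality $\delta_n(u,v)\le\|\iota_n(u)-\iota_n(v)\|_X$ for a $1$-discrete space, and nothing about homogeneity of the norm, so this extension is routine.

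The step I expect to be the main (or only real) obstacle is pinning down precisely that $L_q$ has Enflo-type exactly $\min\{q,2\}$ and that Enflo's argument, via Enflo-type, genuinely delivers the stated lower bound on $c_{L_q}(H_m,d_p)$ in the range $0<p\le 1$ — i.e. justifying the $0<p\le1$ line of Theorem \ref{Enflo} and confirming the target space $L_q$ (as opposed to a generic metric space) is covered. Everything else — the isometric embedding of the Hamming cube into $(\ell_p,d_p)$, the diameter computation, and the bookkeeping in Lemma \ref{Baustin} — is essentially immediate.
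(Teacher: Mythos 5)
Your upper-bound argument is exactly the paper's: the Hamming cubes $(H_m,d_p)$ sit isometrically in $(\ell_p,d_p)$ (the $0/1$ coordinates make the embedding isometric regardless of $p$), Theorem \ref{Enflo} with the fact that $L_q$ has Enflo-type $q$ for $1\le q\le 2$ and Enflo-type $2$ for $q\ge 2$ gives $c_{L_q}(H_m,d_p)\gtrsim \diam(H_m)^{1-1/\min\{q,2\}}$, and Lemma \ref{Baustin} (in the quasi-Banach-domain form noted right after its proof) yields $\alpha_{L_q}(\ell_p)\le \frac{1}{\min\{q,2\}}$. That half is fine and needs no change.

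The genuine gap is in your lower bound, where you have the two regimes exactly backwards. For $q>2$ the chain $\ell_p\to\ell_q\hookrightarrow L_q$ with $\alpha_{\ell_q}(\ell_p)\ge\frac1q$ only gives $\alpha_{L_q}(\ell_p)\ge\frac1q<\frac12$, which does not meet the claimed value $\frac{1}{\min\{q,2\}}=\frac12$; there you must instead route through the isometric copy of $\ell_2$ inside $L_q$, using $\alpha_{\ell_2}(\ell_p)\ge\frac12$ (Corollary \ref{alphapq} with $q=2$, i.e.\ the half-snowflake of $d_p$ into Hilbert space) and $\alpha_Z(X)\ge\alpha_Y(X)\alpha_Z(Y)$. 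Conversely, for $1\le q<2$ your half-snowflake-into-$\ell_2$ route gives only $\frac12<\frac1q$; there the correct chain is the one you reserved for $q\ge2$, namely $\ell_p\to\ell_q\hookrightarrow L_q$ with $\alpha_{\ell_q}(\ell_p)\ge\frac1q$ (or, what \cite{AlbiacBaudier2013} actually provides and what the paper cites, the bi-Lipschitz embedding of the $1/q$-snowflake of $(\ell_p,d_p)$ into $L_q$ for $q\le2$). Your fallback justification, ``every metric space embeds coarsely into $\ell_q$ via the snowflake when $q\le 2$,'' is false as stated (expanders do not coarsely embed into $\ell_q$ for $q\le2$), and Theorem \ref{Hsdeformation} and Corollary \ref{Lplp} concern embeddings of $\ell_2$ into $\ell_q$ and of $L_p$ into $\ell_p$ respectively, so they do not by themselves give $\alpha_{L_q}(\ell_p)\ge\frac{1}{\min\{q,2\}}$. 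The paper sidesteps all of this by simply quoting the lower bound from \cite{AlbiacBaudier2013}; your chaining idea works, but only after swapping the two routes as indicated.
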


\begin{proof}

It is clear that $\ell_p$ contains isometric copies of the Hamming cubes $(H_m,d_p)$ for every $m\ge 1$. One concludes using Theorem \ref{Enflo} and invoking the fact that $L_q$ has Enflo-type $q$ if $1\le q\le 2$ and Enflo-type $2$ if $q\ge 2$. The lower bound can be found in \cite{AlbiacBaudier2013}.
\end{proof}

In \cite{AlbiacBaudier2013}, for $0<p\neq q<\infty$ the parameter $$s_{\ell_q}(\ell_p):=\sup\{s\le 1: (\ell_p, d_p^s)\lip (\ell_q, d_q)\}$$ was introduced. It was shown that $\frac{p}{q}\le s_{\ell_q}(\ell_p)\le\frac{p}{2}$ when $1\le p\le 2< q$ and $\frac{p}{q}\le s_{\ell_q}(\ell_p)\le 1$ for $2\le p< q$ and the question whether or not it was possible to close the gaps was raised. It is straightforward that the inequalities $0\le s_{\ell_q}(\ell_p)\le \alpha_{\ell_q}(\ell_p)\le 1$ hold. From the results of the current section it follows that these two parameters coincide in numerous situations and satisfy the same tight estimates given in Corollary \ref{alphapq}. 

\begin{corollary}[Estimation of the parameter $s_{\ell_q}(\ell_p)$]\ \\
Let $q>1$ and $0<p<q<\infty$. Then $$s_{\ell_q}(\ell_p)=\alpha_{\ell_q}(\ell_p).$$
\end{corollary}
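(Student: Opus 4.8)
The plan is to establish the two inequalities $s_{\ell_q}(\ell_p)\le \alpha_{\ell_q}(\ell_p)$ and $\alpha_{\ell_q}(\ell_p)\le s_{\ell_q}(\ell_p)$ separately. The first is the trivial one already recorded in the text: if $s\le 1$ and $g$ witnesses $(\ell_p,d_p^{\,s})\lip(\ell_q,d_q)$, then for $d_p(x,y)\ge 1$ one has $d_p(x,y)^{s}\le d_p(x,y)$, so $g$ is in particular a coarse-Lipschitz embedding whose compression modulus is at least $t^{s}$, whence $\alpha_{\ell_q}(\ell_p)\ge s$; taking the supremum over admissible $s$ gives $s_{\ell_q}(\ell_p)\le\alpha_{\ell_q}(\ell_p)$. (In particular $0\le s_{\ell_q}(\ell_p)\le\alpha_{\ell_q}(\ell_p)\le 1$.)

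For the reverse inequality I would invoke Corollary \ref{alphapq}. Since $q>1$, every pair $0<p<q<\infty$ falls into exactly one of the two regimes treated there: either $1\le p<q<\infty$, where $\alpha_{\ell_q}(\ell_p)=p/q$, or $0<p\le 1<q<\infty$, where $\alpha_{\ell_q}(\ell_p)=1/q$. It therefore suffices to exhibit a bi-Lipschitz embedding of the corresponding snowflake, namely $(\ell_p,d_p^{\,p/q})\lip(\ell_q,d_q)$ in the first regime and $(\ell_p,d_p^{\,1/q})\lip(\ell_q,d_q)$ in the second: these give $s_{\ell_q}(\ell_p)\ge p/q$, resp. $\ge 1/q$, i.e. $s_{\ell_q}(\ell_p)\ge\alpha_{\ell_q}(\ell_p)$, and equality follows.

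These snowflake embeddings are exactly the ones underlying the lower estimates for $s_{\ell_q}(\ell_p)$ recalled from \cite{AlbiacBaudier2013}, so one may simply cite them; but a self-contained argument is available and this is the only place a small computation is needed. In both regimes set $\theta=p/q\in(0,1)$. The snowflaked line $(\R,|u-v|^{\theta})$ is doubling, so by Assouad's theorem \cite{Assouad1983} there is a bi-Lipschitz embedding $g\colon(\R,|u-v|^{\theta})\hookrightarrow\ell_q^{N}$ for some $N$, normalised so that $g(0)=0$; then $\|g(u)-g(v)\|_q^{q}\asymp|u-v|^{p}$. Glueing coordinatewise, $f(x)=(g(x_n))_{n\ge 1}$ is well defined from $\ell_p$ into $\left(\bigoplus_{n\ge1}\ell_q^{N}\right)_q\cong\ell_q$ (since $\sum_n\|g(x_n)\|_q^{q}\asymp\sum_n|x_n|^{p}<\infty$) and satisfies $\|f(x)-f(y)\|_q^{q}\asymp\sum_n|x_n-y_n|^{p}$. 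The right-hand side equals $\|x-y\|_p^{p}=d_p(x,y)^{p}$ when $p\ge 1$ and equals $d_p(x,y)$ when $0<p\le1$, so $f$ is a bi-Lipschitz embedding of $(\ell_p,d_p^{\,p/q})$, resp. of $(\ell_p,d_p^{\,1/q})$, into $\ell_q$. Note that the target is visibly a countable $\ell_q$-sum of finite-dimensional spaces, so one really lands in $\ell_q$ and not merely in $L_q$.

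The substantive ingredient is not in this corollary at all: it is the sharp \emph{upper} bound $\alpha_{\ell_q}(\ell_p)\le p/q$, resp. $\le 1/q$, of Corollary \ref{alphapq}, which rests on the Kalton--Randrianarivony smoothness property and Theorem \ref{KR}. Granting that, the present statement is a matter of bookkeeping: one checks case by case that the compression exponent forced from above by Theorem \ref{KR} is realised from below by an honest snowflake embedding, so that $s_{\ell_q}(\ell_p)$ is squeezed from both sides onto the same value as $\alpha_{\ell_q}(\ell_p)$.
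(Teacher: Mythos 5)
Your argument is correct and essentially the paper's: the trivial inequality $s_{\ell_q}(\ell_p)\le\alpha_{\ell_q}(\ell_p)$ is squeezed against the exact values $\tfrac{p}{q}$ (resp. $\tfrac1q$) of Corollary \ref{alphapq}, whose lower bounds are realized precisely by bi-Lipschitz snowflake embeddings, i.e. by witnesses for $s_{\ell_q}(\ell_p)$. Your Assouad-plus-coordinatewise-gluing construction is simply a correct, self-contained substitute for the embeddings cited from \cite{AlbiacBaudier2013} (including the identification of $\left(\bigoplus_{n}\ell_q^{N}\right)_q$ with $\ell_q$), so the route is the same.
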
 

\subsubsection{Application to the coarse embeddability of $L_q$ into $\ell_q$, $q>2$}\ \\

The problem of the coarse embeddability of $L_q$ into $\ell_q$ for $q>2$ is an important open problem. So far it is still unknown whether or not there exists a coarse embedding. However a straightforward application of the upper estimate on the parameter $\alpha_{\ell_q}(\ell_2)$ in combination with the fact that $L_q$ contains an isometric copy of $\ell_2$ shows that such an embedding cannot be too good and will deteriorate as $q$ increases.

\begin{corollary}
For $2<r\le q$, $$0\le \alpha_{\ell_q}(L_r)\le \frac{2}{q}$$
\end{corollary}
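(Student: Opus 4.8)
The plan is to derive this upper bound by chaining the monotonicity of the compression exponent under coarse embeddings together with the already-computed estimate on $\alpha_{\ell_q}(\ell_2)$. The key observations are: first, $L_r$ for $r>2$ contains an isometric (in particular, coarse) copy of $\ell_2$, so $\ell_2 \coarse L_r$; second, the composition property $\alpha_Z(X) \ge \alpha_Y(X)\alpha_Z(Y)$ used already in the proof of the corollary on $\alpha_{\ell_q}(\ell_p)$ for $0<q<p<2$. Applying this with $X=\ell_2$, $Y=L_r$, $Z=\ell_q$ gives $\alpha_{\ell_q}(\ell_2) \ge \alpha_{L_r}(\ell_2)\cdot\alpha_{\ell_q}(L_r)$. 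Since $\ell_2$ embeds isometrically into $L_r$, we have $\alpha_{L_r}(\ell_2)=1$, so this reduces to $\alpha_{\ell_q}(L_r) \le \alpha_{\ell_q}(\ell_2)$.

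The second ingredient is the upper bound $\alpha_{\ell_q}(\ell_2) \le \tfrac{2}{q}$ for $q>2$. This follows from Corollary \ref{alphapq} specialized to $p=2$: indeed $\alpha_{\ell_q}(\ell_p)=\tfrac{p}{q}$ for $1\le p<q<\infty$, so with $p=2<q$ one gets exactly $\alpha_{\ell_q}(\ell_2)=\tfrac{2}{q}$. Combining the two displays yields $\alpha_{\ell_q}(L_r) \le \tfrac{2}{q}$, valid for all $2<r\le q$ (the constraint $r\le q$ is only needed so that a coarse embedding of $L_r$ into $\ell_q$ can even be contemplated — for $r>q$ there is no such embedding and the compression is $0$, which also satisfies the bound). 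The lower bound $\alpha_{\ell_q}(L_r)\ge 0$ is trivial.

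I do not anticipate a genuine obstacle here: the statement is essentially a formal consequence of results already established in the paper. The one point requiring a sentence of care is the isometric embedding $\ell_2 \hookrightarrow L_r$ for $r>2$ — this is the classical fact (due to the existence of symmetric $r$-stable random variables, or equivalently Gaussian variables generating an $L_r$-isometric copy of $\ell_2$ for any $1\le r<\infty$) that the span of a sequence of i.i.d.\ standard Gaussians in $L_r$ is isometrically isomorphic to $\ell_2$. With that in hand, the argument is just: isometric embedding $\Rightarrow$ coarse embedding with compression exponent $1$, then apply the multiplicativity of $\alpha$ along the chain $\ell_2 \coarse L_r \coarse \ell_q$ together with $\alpha_{\ell_q}(\ell_2)=\tfrac{2}{q}$ from Corollary \ref{alphapq}.

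\begin{proof}
We may assume $L_r$ coarsely embeds into $\ell_q$, otherwise $\alpha_{\ell_q}(L_r)=0$ and there is nothing to prove. Recall that for $r\ge 1$ the closed linear span of a sequence of independent standard Gaussian random variables in $L_r$ is linearly isometric to $\ell_2$; hence $\ell_2$ embeds isometrically, in particular coarsely, into $L_r$, and $\alpha_{L_r}(\ell_2)=1$. Using the composition inequality $\alpha_Z(X)\ge \alpha_Y(X)\,\alpha_Z(Y)$ (valid whenever $X\coarse Y\coarse Z$) with $X=\ell_2$, $Y=L_r$, $Z=\ell_q$ we obtain
$$
\alpha_{\ell_q}(\ell_2)\ \ge\ \alpha_{L_r}(\ell_2)\cdot\alpha_{\ell_q}(L_r)\ =\ \alpha_{\ell_q}(L_r).
$$
On the other hand, Corollary \ref{alphapq} with $p=2<q$ gives $\alpha_{\ell_q}(\ell_2)=\tfrac{2}{q}$. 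Combining the two relations yields $\alpha_{\ell_q}(L_r)\le \tfrac{2}{q}$, and the lower bound $\alpha_{\ell_q}(L_r)\ge 0$ is immediate.
\end{proof}
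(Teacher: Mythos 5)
Your proof is correct and follows essentially the same route as the paper: the paper also deduces the bound from the isometric copy of $\ell_2$ inside $L_r$ (Gaussians) together with the upper estimate $\alpha_{\ell_q}(\ell_2)=\tfrac{2}{q}$ from Corollary \ref{alphapq}, your invocation of the composition inequality $\alpha_Z(X)\ge\alpha_Y(X)\,\alpha_Z(Y)$ being just a packaged form of restricting a putative coarse embedding of $L_r$ to that copy of $\ell_2$. The side remarks (handling the case where no coarse embedding exists, and the role of the constraint $r\le q$) are sound and do not change the argument.
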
  
\section{Coarse deformation gaps for metric spaces with property A}
\subsection{Metric measured spaces with property A}\ \\

In this section one considers $\ell_p$-sums over sets that are not necessarily countable. For the sake of simplicity one shall restrict ourselves to the zone $p\ge 1$ even if it is clear that a parallel study can be carried over for $0<p<1$. Let $\Gamma$ be a set, $(z_\gamma)_{\gamma\in \Gamma}\in \R^{\Gamma}$ and define $\ds\sum_{\gamma\in\Gamma}\vert z_\gamma\vert^p:=\ds\sup_{J\subset \Gamma,\#J<\infty}\sum_{j\in J}\vert z_j\vert^p$. Endowed with the classical $\ell_p$-norm the linear space $$\ell_p(\Gamma,\R):=\left\{z=(z_\gamma)_{\gamma\in \Gamma}\in\R^\Gamma;\ \sum_{\gamma\in\Gamma}\vert z_\gamma\vert^p<\infty\right\}$$ is a Banach space.

\smallskip

Recall the original definition of Yu's property A. A discrete metric space with bounded geometry $(\MM,d)$ is said to have property A if for any $R>0$, $\epsilon>0$, there exists $S>0$ and a family $(A_x)_{x\in \MM}$ of finite, nonempty subsets of $\MM\times \N$ such that 
\begin{enumerate}
\item $(y,n)\in A_x$ implies $d(x,y)\le S$;
\item $d(x,y)\le R$ implies $\ds\frac{\vert A_x\Delta A_y\vert}{\vert A_x\bigcap A_y \vert}\le \epsilon$
\end{enumerate}

The connection between property A and coarse embeddings is better seen with the equivalent definitions listed below (see \cite{Tu2001}) where it is shown that having property A is equivalent to the existence of fundamental maps. 

\begin{proposition}[Equivalent definitions of property A]
Let $\MM$ be a discrete metric space with bounded geometry. The following assertions are equivalent:
\begin{enumerate}
\item $\MM$ has property A
\item $\forall\ R>0$, $\forall\ \epsilon>0$, $\exists S>0$, $\exists (\zeta_x)_{x\in\MM}$, $\zeta_x\in \ell_2(\MM\times\N,\R)$,  $\supp(\zeta_x)\subset B(x,S)\times\N$, $\Vert \zeta_x\Vert_{\ell_2(\MM\times\N,\R)}=1$ and $\Vert \zeta_x-\zeta_y\Vert_{\ell_2(\MM\times\N,\R)}\le \epsilon$ whenever $d(x,y)\le R$
\item $\forall\ R>0$, $\forall\ \epsilon>0$, $\exists S>0$, $\exists (\eta_x)_{x\in\MM}$ $\eta_x\in \ell_2(\MM,\R)$,  $\supp(\eta_x)\subset B(x,S)$, $\Vert \eta_x\Vert_{\ell_2(\MM,\R)}=1$ and $\Vert \eta_x-\eta_y\Vert_{\ell_2(\MM,\R)}\le \epsilon$ whenever $d(x,y)\le R$
\item $\forall\ R>0$, $\forall\ \epsilon>0$, $\exists S>0$, $\exists (\xi_x)_{x\in\MM}$, $\xi_x\in \ell_1(\MM,\R)$,  $\supp(\xi_x)\subset B(x,S)$, $\Vert \xi_x\Vert_{\ell_1(\MM,\R)}=1$ and $\Vert \xi_x-\xi_y\Vert_{\ell_1(\MM,\R)}\le \epsilon$ whenever $d(x,y)\le R$
\end{enumerate}
\end{proposition}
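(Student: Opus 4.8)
The plan is to prove the chain of implications $(1)\Rightarrow(2)\Rightarrow(3)\Rightarrow(4)\Rightarrow(1)$, passing back and forth between ``set data'' and ``$\ell_p$-function data'' by means of normalized indicator functions and one Mazur map. Along the way the support scale $S$ is never enlarged, while the parameter $\epsilon$ is allowed to be replaced at each step by a fixed function of itself (a power, or a constant multiple); this is harmless because each definition quantifies over all $\epsilon>0$. Bounded geometry of $\MM$ — i.e.\ a bound $|B(x,S)|\le N(S)$ independent of $x$ — enters only in the final implication.

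For $(1)\Rightarrow(2)$, set $\zeta_x=|A_x|^{-1/2}\mathbf 1_{A_x}\in\ell_2(\MM\times\N,\R)$. Then $\|\zeta_x\|_2=1$, $\supp\zeta_x=A_x\subset B(x,S)\times\N$, and, since both vectors are unit vectors, $\|\zeta_x-\zeta_y\|_2^2=2\bigl(1-|A_x\cap A_y|\,/\,\sqrt{|A_x|\,|A_y|}\bigr)$; the hypothesis $|A_x|+|A_y|-2|A_x\cap A_y|\le\epsilon|A_x\cap A_y|$ together with the AM--GM inequality forces $|A_x\cap A_y|/\sqrt{|A_x||A_y|}\ge 1-\epsilon/2$, hence $\|\zeta_x-\zeta_y\|_2\le\sqrt{\epsilon}$ whenever $d(x,y)\le R$. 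For $(2)\Rightarrow(3)$, collapse the $\N$-coordinate by taking fibrewise norms: $\eta_x(w):=\|\zeta_x(w,\cdot)\|_{\ell_2(\N)}\ge 0$. Then $\|\eta_x\|_{\ell_2(\MM)}=\|\zeta_x\|_{\ell_2(\MM\times\N)}=1$, $\supp\eta_x\subset B(x,S)$, and the pointwise reverse triangle inequality in $\ell_2(\N)$ followed by summing in $w$ gives $\|\eta_x-\eta_y\|_2\le\|\zeta_x-\zeta_y\|_2$; note this needs neither bounded geometry nor any loss in $\epsilon$.

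For $(3)\Rightarrow(4)$, we may assume $\eta_x\ge 0$ (replace it by $|\eta_x|$, which changes neither the norm nor the support and does not increase $\|\eta_x-\eta_y\|_2$) and apply the Mazur map $M_{2,1}$, i.e.\ take $\xi_x:=\eta_x^2$: then $\|\xi_x\|_1=1$, $\supp\xi_x=\supp\eta_x\subset B(x,S)$, and the Cauchy--Schwarz inequality gives $\|\xi_x-\xi_y\|_1=\sum_w|\eta_x(w)-\eta_y(w)|\,(\eta_x(w)+\eta_y(w))\le\|\eta_x-\eta_y\|_2\,\|\eta_x+\eta_y\|_2\le 2\epsilon$. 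For $(4)\Rightarrow(1)$ (we may again assume $\xi_x\ge 0$), discretize: fix a large integer $m$ and let $A_x\subset\MM\times\N$ be the set whose fibre over each $w$ is the initial segment $\{1,\dots,\lceil m\,\xi_x(w)\rceil\}$. Then $A_x$ is finite and nonempty, $\pi_\MM(A_x)=\supp\xi_x\subset B(x,S)$, and $|A_x|\in[m,\,m+|\supp\xi_x|]$; reading off the fibres, $|A_x\triangle A_y|=\sum_w\bigl|\lceil m\xi_x(w)\rceil-\lceil m\xi_y(w)\rceil\bigr|\le m\|\xi_x-\xi_y\|_1+|\supp\xi_x\cup\supp\xi_y|$, while $|A_x\cap A_y|=\sum_w\min\{\lceil m\xi_x(w)\rceil,\lceil m\xi_y(w)\rceil\}\ge|A_x|-|A_x\triangle A_y|$. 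Since $|\supp\xi_x\cup\supp\xi_y|\le 2N(S)$ is bounded uniformly in $x,y$, choosing $m$ large enough in terms of $N(S)$ and $\epsilon$ only makes $|A_x\triangle A_y|/|A_x\cap A_y|\lesssim\epsilon$ whenever $d(x,y)\le R$, which is property A with the same scale $S$.

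The only step that is not purely formal is this last one: discretizing a probability vector $\xi_x$ supported on $B(x,S)$ to a genuine finite subset introduces a rounding error of order $|\supp\xi_x|/m$, and one needs a single $m$ to work for every $x$ simultaneously — this is precisely where bounded geometry is indispensable, as it bounds $|\supp\xi_x|$ uniformly in $x$. Everything else reduces to the elementary identities expressing $\ell_1$- and $\ell_2$-distances of normalized indicators in terms of $|A\triangle B|/|A\cap B|$, plus one use each of AM--GM and Cauchy--Schwarz; I would verify these carefully but anticipate no difficulty.
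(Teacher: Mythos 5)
Your proof is correct. Note that the paper itself does not prove this proposition; it quotes it from Tu's paper cited there, and your argument is essentially the standard one from that source (and from Nowak--Yu): normalized indicator functions $\zeta_x=|A_x|^{-1/2}\mathbf 1_{A_x}$ for $(1)\Rightarrow(2)$, fibrewise collapse of the $\N$-coordinate for $(2)\Rightarrow(3)$, the squaring map $\xi_x=\eta_x^2$ with Cauchy--Schwarz for $(3)\Rightarrow(4)$, and the stacking of $\xi_x$ into initial segments over $\MM\times\N$ for $(4)\Rightarrow(1)$, with bounded geometry bounding $|\supp\xi_x|$ so that one rounding scale $m$ works uniformly. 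The quantitative losses you track ($\sqrt\epsilon$, $2\epsilon$, a constant multiple of $\epsilon$) are indeed harmless since each condition is quantified over all $\epsilon>0$, so no gaps remain.
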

Following the same ideas one starts with what will be the main quantitative lemma which is derived from Yu's property A. 

\begin{lemma}\label{Agluing} Let $(\MM,d)$ be a metric space and $1\le p<\infty$. Denote $X=\ell_p(\MM,\R)$. Suppose that one can find sequences $(r_n)_{n\ge 1}$, $(\epsilon_n)_{n\ge 1}$, $(s_n)_{n\ge 1}$ and maps $\varphi_n\colon \MM\to S_{X}$ satisfying 
\begin{enumerate}
\item $(\epsilon_n)_{n\ge 1}$ is $p$-summable; $(r_n)_{n\ge 1}$ and $(s_n)_{n\ge 1}$ are nondecreasing and unbounded\\
\item $\Vert \varphi_n(x)-\varphi_n(y)\Vert_p\le \epsilon_n$ whenever $d(x,y)\le r_n$\\
\item $\supp(\varphi_n(x))\subset B(x,s_n)$
\end{enumerate}

\medskip

Define $\phi\colon\MM\to \ell_p(\N,X)$ by $\phi(x)=(\varphi_n(x)-\varphi_n(t_0))_{n\ge 1}$, then there exists a constant $K> 0$ such that the following inequalities hold:

\begin{equation*}
\Vert \phi(x)-\phi(y)\Vert_p\le 2k^{1/p}+K^{1/p} \textrm{ whenever }r_k\le d(x,y)\le r_{k+1}
\end{equation*} 

and
\begin{equation*}
\Vert \phi(x)-\phi(y)\Vert_p\ge 2k^{1/p} \textrm{ as long as }2s_k\le d(x,y)\le 2s_{k+1}.
\end{equation*}

\end{lemma}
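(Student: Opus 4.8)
The plan is to mimic the proofs of Lemmas \ref{stronggluing} and \ref{coarsegluing}, exploiting the two defining properties of the maps $\varphi_n$: the \emph{overlap} hypothesis (2), which bounds $\Vert\varphi_n(x)-\varphi_n(y)\Vert_p$ when $d(x,y)$ is small, and the \emph{support} hypothesis (3), which \emph{forces} the coordinates to be far apart once $d(x,y)$ exceeds $2s_n$. As before, $\phi$ is well defined into $\ell_p(\N,X)$: since each $\varphi_n$ maps into the unit sphere, $\Vert\varphi_n(x)-\varphi_n(t_0)\Vert_p\le 2$, and more precisely for each fixed pair $x,y$ one splits the index set $\N=\{n: r_n\ge d(x,y)\}\cup\{n: r_n< d(x,y)\}$; on the first set $\Vert\varphi_n(x)-\varphi_n(y)\Vert_p\le\epsilon_n$ by (2), which is $p$-summable, and the second set is finite, of cardinality at most $k$ when $r_k\le d(x,y)\le r_{k+1}$, on which we only use the trivial bound $2$. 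This gives
\begin{equation*}
\Vert\phi(x)-\phi(y)\Vert_p^p=\sum_{n=1}^\infty\Vert\varphi_n(x)-\varphi_n(y)\Vert_p^p\le 2^p k+\sum_{n=1}^\infty\epsilon_n^p=2^p k+K,
\end{equation*}
with $K:=\sum_n\epsilon_n^p<\infty$, and hence $\Vert\phi(x)-\phi(y)\Vert_p\le 2k^{1/p}+K^{1/p}$, which is the first displayed inequality.

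For the lower bound I would use the support condition (3). The key geometric observation is: if $d(x,y)>2s_n$ then $B(x,s_n)\cap B(y,s_n)=\emptyset$, so $\varphi_n(x)$ and $\varphi_n(y)$ have disjoint supports in $\ell_p(\MM,\R)$; since both are unit vectors, $\Vert\varphi_n(x)-\varphi_n(y)\Vert_p=(1+1)^{1/p}=2^{1/p}$, hence $\Vert\varphi_n(x)-\varphi_n(y)\Vert_p^p=2$. Therefore, when $2s_k\le d(x,y)\le 2s_{k+1}$, for each $n\le k$ we have $d(x,y)\ge 2s_k\ge 2s_n$, so (being slightly careful about the boundary case of equality, which at worst costs a harmless additive constant, or can be absorbed by passing to $n<k$) at least $k$ coordinates contribute $2$ each:
\begin{equation*}
\Vert\phi(x)-\phi(y)\Vert_p^p=\sum_{n=1}^\infty\Vert\varphi_n(x)-\varphi_n(y)\Vert_p^p\ge\sum_{n=1}^k 2=2k,
\end{equation*}
so $\Vert\phi(x)-\phi(y)\Vert_p\ge (2k)^{1/p}\ge 2k^{1/p}$ — wait, that last inequality is false for $p>1$; the honest conclusion is $\Vert\phi(x)-\phi(y)\Vert_p\ge (2k)^{1/p}$, and since $2^{1/p}\ge 1$ one certainly has $\ge k^{1/p}$, which is the form actually needed downstream; if the literal statement $2k^{1/p}$ is wanted one simply iterates the estimate over a factor-$2^{p-1}$ larger index range, i.e. counts $2^{p-1}k$ disjoint-support coordinates, which is available by reindexing $(s_n)$ along a subsequence. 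This is the only delicate point and it is purely bookkeeping about constants and the precise threshold ($2s_k$ versus $s_k$), exactly as in Lemma \ref{coarsegluing}.

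The main (and only real) obstacle is thus making the disjoint-support lower bound line up \emph{exactly} with the stated constant $2k^{1/p}$ and the stated threshold $2s_k\le d(x,y)\le 2s_{k+1}$: the triangle inequality gives disjointness of $B(x,s_n)$ and $B(y,s_n)$ as soon as $d(x,y)>2s_n$, so the condition $d(x,y)\ge 2s_k$ guarantees disjointness for all $n\le k$ (with a cosmetic adjustment at $n=k$ if one insists on strict inequality), and the factor of $2$ in front of $k^{1/p}$ comes from the $2=\Vert\varphi_n(x)-\varphi_n(y)\Vert_p^p$ contribution being at least $2\ge 1$ so that $\sum_{n\le k}2\ge 2k$; rewriting $(2k)^{1/p}$ as is, this is the claimed bound up to the convention that the authors evidently intend the weaker $\gtrsim k^{1/p}$ form. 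Everything else — well-definedness, the upper estimate, and the $\ell_p$-sum manipulations — is identical to the arguments already carried out for Lemmas \ref{stronggluing} and \ref{coarsegluing}, so I would simply cite those computations rather than repeat them.
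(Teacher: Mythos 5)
Your proof is correct and follows essentially the same route as the paper, whose entire argument is to observe that $\varphi_n(x)$ and $\varphi_n(y)$ have disjoint supports once $d(x,y)>2s_n$ (so each such coordinate contributes $2$ to the $p$-th power of the norm) and then to invoke Lemma \ref{coarsegluing} with $X_n=X$ for all $n$ — exactly the computation you carry out inline, including the split of indices and the subadditivity of $t\mapsto t^{1/p}$ for the upper bound. Your remark that the disjointness argument honestly yields $(2k)^{1/p}=2^{1/p}k^{1/p}$ rather than the literal $2k^{1/p}$ is well taken: the paper's own appeal to Lemma \ref{coarsegluing} (where the correct value is $\eta=2^{1/p}$) gives no more for $p>1$, and only the $\gtrsim k^{1/p}$ form is used in Proposition \ref{Aembedding}, so this is a typo in the constant rather than a gap in your argument.
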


\begin{proof}
Simply remark that $\varphi_n(x)$ and $\varphi_n(y)$ are disjoint if $d(x,y)>2s_n$ and apply lemma \ref{coarsegluing} with $\eta=2^q$ and $X_n=X$ for all $n$.
\end{proof}

For our purposes it is natural to extend property $A$ to metric measured spaces in the following sense. A metric space $(\MM,d)$ with a measure denoted $\vert\cdot\vert$ is said to have property $A$ if for any $R>0$, $\varepsilon>0$, there exists $S>0$ and a family $(A(x))_{x\in \MM}$ of compact, nonempty subsets of $\MM$ such that 
\begin{enumerate}
\item $z\in A(x)$ implies $d(x,z)\le S$
\item $d(x,y)\le R$ implies $\ds\frac{\vert A(x)\Delta A(y)\vert}{\vert A(x)\cap A(y)\vert}\le \varepsilon$
\end{enumerate}

A convenient terminology is needed in order to study property A in its quantitative aspect. Let $\epsilon=(\epsilon_n)_{n\ge 1}$ and $r=(r_n)_{n\ge 1}$. An \textit{$(\epsilon,r)$-A collection} $\A=(A_n(x))_{(n,x)\in\N\times\MM}$ is a collection of compact sets so that for all $n\ge 1$ the collection of sets $(A_n(x))_{x\in\MM}$ satisfies the two conditions in the definition of property $A$ with  $\varepsilon=\epsilon_n$ and $R=r_n$ for some $S_n>0$. 

\smallskip

A \textit{fast $(\epsilon,r)$-A collection} is an $(\epsilon,r)$-A collection such that $\epsilon=(\epsilon_n)_{n\ge 1}$ is summable. An $(\epsilon,r)$-A collection such that $r_n=n$ is said to be \textit{adapted}.

\smallskip

Since the sets $A_n(x)$ are compact, by condition $(1)$ of property $A$ there exists a number $0<S_n<\infty$ such that $A_n(x)$ is included in the ball centered at $x$ and of radius $S_n$. For an A-collection $\A=(A_n(x))_{(n,x)\in\N\times\MM}$ denote $rad_\A$ the smallest non-decreasing function $S$ such that for all $x$ and $n$ $$A_n(x)\subset B_{d}(x,S(n))$$ and call it the \textit{ radial dilation function } of the A-collection. Normalized characteristic functions of fast $A$-collections produce fundamental maps.

\begin{lemma}\label{lemmamenable}
If a metric measured space $\MM$ admits a fast $(\epsilon,r)$-A collection $\A$ then there exist maps \\$\varphi_n\colon \MM\to S_{\ell_p(\MM,\R)}$ satisfying 
\begin{enumerate}
\item $\Vert \varphi_n(x)-\varphi_n(y)\Vert_p\lesssim\epsilon_n^{1/p}$ whenever $d(x,y)\le r_n$\\
\item $\supp(\varphi_n(x))\subset B(x,rad_\A(n))$
\end{enumerate}
\end{lemma}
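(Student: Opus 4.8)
The plan is to take for the fundamental maps the normalized indicator functions of the sets of the A-collection. Precisely, for $n\ge 1$ and $x\in\MM$ I would set
\[
\varphi_n(x):=\frac{\mathbf{1}_{A_n(x)}}{\vert A_n(x)\vert^{1/p}},
\]
regarded as an element of $\ell_p(\MM,\R)$, i.e.\ of $L_p(\MM,\vert\cdot\vert)$ (so that $\Vert\cdot\Vert_p$ below is the $L_p$-norm of the measure $\vert\cdot\vert$). Since $A_n(x)$ is a nonempty compact set, $0<\vert A_n(x)\vert<\infty$ (positivity is forced by the denominator in the ratio condition of property A), so $\varphi_n(x)$ is well defined and $\Vert\varphi_n(x)\Vert_p^{p}=\vert A_n(x)\vert^{-1}\vert A_n(x)\vert=1$; thus $\varphi_n(x)\in S_{\ell_p(\MM,\R)}$. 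Assertion (2) is then immediate, since $\supp(\varphi_n(x))=A_n(x)\subset B(x,rad_\A(n))$ by the very definition of the radial dilation function. Hence the substance of the lemma is assertion (1), the Lipschitz-type estimate.

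The key step, and really the only one requiring an idea, is to reduce this $L_p$-estimate to the familiar $L_1$-estimate for property A by exploiting the subadditivity of the power function. Since $p\ge 1$, the map $t\mapsto t^{1/p}$ is concave on $[0,\infty)$ and vanishes at $0$, so $(a+b)^{1/p}\le a^{1/p}+b^{1/p}$ for all $a,b\ge 0$, which rearranges to the pointwise inequality $\vert a^{1/p}-b^{1/p}\vert^{p}\le\vert a-b\vert$. Writing $u_n(x):=\vert A_n(x)\vert^{-1}\mathbf{1}_{A_n(x)}$, so that $\varphi_n(x)=u_n(x)^{1/p}$ pointwise and $\Vert u_n(x)\Vert_1=1$, I would apply this inequality with $a=u_n(x)$, $b=u_n(y)$ and integrate against $\vert\cdot\vert$, getting
\[
\Vert\varphi_n(x)-\varphi_n(y)\Vert_p^{p}\le\Vert u_n(x)-u_n(y)\Vert_1 .
\]

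It then remains to carry out the standard $L_1$ computation. Fix $x,y$ with $d(x,y)\le r_n$ and put $A=A_n(x)$, $B=A_n(y)$; by symmetry we may assume $\vert A\vert\ge\vert B\vert$. Splitting the integral defining $\Vert u_n(x)-u_n(y)\Vert_1$ over $A\cap B$, $A\setminus B$ and $B\setminus A$ and simplifying gives
\[
\Vert u_n(x)-u_n(y)\Vert_1=\frac{2\,\vert A\setminus B\vert}{\vert A\vert}\le\frac{2\,\vert A\Delta B\vert}{\vert A\cap B\vert}\le 2\epsilon_n ,
\]
the middle inequality using $\vert A\vert\ge\vert A\cap B\vert$ and the last one being the ratio condition in the definition of property A applied to the family $(A_n(x))_{x\in\MM}$ (which holds with $\varepsilon=\epsilon_n$, $R=r_n$, since $d(x,y)\le r_n$). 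Combining the two displays yields $\Vert\varphi_n(x)-\varphi_n(y)\Vert_p\le(2\epsilon_n)^{1/p}\lesssim\epsilon_n^{1/p}$, which is assertion (1). I do not anticipate any genuine obstacle: once the $L_p\to L_1$ reduction is observed, the rest is the routine bookkeeping already underlying the equivalent formulations of property A recalled above.
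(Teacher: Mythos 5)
Your proof is correct and follows essentially the same route as the paper: the same normalized indicator maps $\varphi_n(x)=\chi_{A_n(x)}/\vert A_n(x)\vert^{1/p}$, the same reduction of the $L_p$ estimate to an $L_1$ estimate via the pointwise inequality $\vert a^{1/p}-b^{1/p}\vert^{p}\le\vert a-b\vert$, and the same symmetric-difference computation yielding $\Vert\varphi_n(x)-\varphi_n(y)\Vert_p^p\le 2\epsilon_n$ when $d(x,y)\le r_n$. Your exact identity $\Vert u_n(x)-u_n(y)\Vert_1=2\vert A\setminus B\vert/\vert A\vert$ is a slightly cleaner bookkeeping of the paper's chain of inequalities, but it is the same argument.
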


\begin{proof}
Let $\A=(A_n(x))_{(n,x)\in\N\times\MM}$ be a fast $(\epsilon,r)$-A collection and define $\varphi_n(x)=\ds\frac{\chi_{A_n(x)}}{\vert A_n(x)\vert^{1/p}}$. Clearly $\varphi_n\in \ell_p(\MM,\R)$, $\Vert \varphi_n(x)\Vert_p=1$ and $$\supp(\varphi_n(x))=A_n(x)\subset B(x,rad_{\A}(n)).$$ Following the case $p=2$ in \cite{NowakYu} one can show that $\Vert\varphi_n(x)-\varphi_n(y)\Vert_p^p\le 2\epsilon_n$ whenever $d(x,y)\le r_n$. Indeed for $p\ge 1$ the inequality $\vert x-y\vert^p\le \vert x^p-y^p\vert$ holds for any $0\le x,y\le 1$. Assume that $\vert A_n(x)\vert\le \vert A_n(y)\vert$.
\begin{align*}
\Vert\varphi_n(x)-\varphi_n(y)\Vert_p^p &=\left\Vert\frac{\chi_{A_n(x)}}{\vert A_n(x)\vert^{1/p}}-\frac{\chi_{A_n(y)}}{\vert A_n(y)\vert^{1/p}}\right\Vert_p^p\\
 &\le\left\Vert\frac{\chi_{A_n(x)}}{\vert A_n(x)\vert}-\frac{\chi_{A_n(y)}}{\vert A_n(y)\vert}\right\Vert_1\\
 \end{align*}
 \begin{align*}
 & \le\frac{1}{\vert A_n(x)\vert}\vert A_n(x)\backslash A_n(y)\vert+\left\vert\frac{1}{\vert A_n(x)\vert}-\frac{1}{\vert A_n(y)\vert}\right\vert\vert A_n(x)\cap A_n(y)\vert+\frac{1}{\vert A_n(y)\vert}\vert A_n(y)\backslash A_n(x)\vert\\
 & \le \frac{\vert A_n(x)\Delta A_n(y)\vert\max\{\vert A_n(x)\vert,\vert A_n(y)\vert\}+\vert A_n(x)\cap A_n(y)\vert\cdot\vert\vert A_n(x)\vert-\vert A_n(y)\vert\vert}{\vert A_n(x)\vert\vert A_n(y)\vert}\\
 & \le \frac{\vert A_n(x)\Delta A_n(y)\vert[\max\{\vert A_n(x)\vert,\vert A_n(y)\vert\}+\vert A_n(x)\cap A_n(y)\vert]}{\vert A_n(x)\vert\vert A_n(y)\vert}\\
  & \le \frac{\vert A_n(x)\Delta A_n(y)\vert\cdot 2\max\{\vert A_n(x)\vert,\vert A_n(y)\vert\}}{\vert A_n(x)\vert\vert A_n(y)\vert}\\
  & \le \frac{2\vert A_n(x)\Delta A_n(y)\vert}{\min\{\vert A_n(x)\vert,\vert A_n(y)\vert\}}\\
  & \le \frac{2\vert A_n(x)\Delta A_n(y)\vert}{\vert A_n(x)\cap A_n(y)\vert}\le 2\epsilon_n\textrm{ whenever }d(x,y)\le r_n\\
  \end{align*}  
\end{proof}

It is by now routine to estimate the compression function in terms of the generalized inverse of the radial dilation function.

\begin{proposition}\label{Aembedding}
Let $p\ge 1$. If a metric measured space $(\MM,d)$ admits an adapted fast $A$-collection then there is a coarse embedding  $\phi$ from $\MM$ into $\ell_p(\MM,\R)$ such that
$$rad_{\A}^{-}(d(x,y))^{1/p}\lesssim_l \Vert \phi(x)-\phi(y)\Vert_p\lesssim_l d(x,y)^{1/p}$$
\end{proposition}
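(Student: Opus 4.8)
The plan is to combine Lemma~\ref{lemmamenable} with Lemma~\ref{Agluing} and then read off the compression function from Proposition~\ref{Fembeddingc}-type reasoning adapted to the $\ell_p(\N,X)$ setting with $X=\ell_p(\MM,\R)$. First I would feed the adapted fast $A$-collection $\A$ into Lemma~\ref{lemmamenable} to obtain fundamental maps $\varphi_n\colon\MM\to S_{\ell_p(\MM,\R)}$ with $\Vert\varphi_n(x)-\varphi_n(y)\Vert_p\lesssim\epsilon_n^{1/p}$ whenever $d(x,y)\le r_n$ (here $r_n=n$ since the collection is adapted) and $\supp(\varphi_n(x))\subset B(x,rad_\A(n))$. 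Since the collection is fast, $(\epsilon_n)_{n\ge1}$ is summable, hence $(\epsilon_n^{1/p})_{n\ge1}$ is $p$-summable, so these maps satisfy exactly hypotheses (1)--(3) of Lemma~\ref{Agluing} with $s_n=rad_\A(n)$.

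Next I would apply Lemma~\ref{Agluing} directly. Defining $\phi(x)=(\varphi_n(x)-\varphi_n(t_0))_{n\ge1}$, Lemma~\ref{Agluing} gives a constant $K>0$ with
\begin{equation*}
\Vert\phi(x)-\phi(y)\Vert_p\le 2k^{1/p}+K^{1/p}\textrm{ whenever }r_k\le d(x,y)\le r_{k+1},
\end{equation*}
and
\begin{equation*}
\Vert\phi(x)-\phi(y)\Vert_p\ge 2k^{1/p}\textrm{ whenever }2s_k\le d(x,y)\le 2s_{k+1}.
\end{equation*}
For the upper bound, since $r_n=n$ one has $r^-(t)\asymp t$, so $r_k\le d(x,y)\le r_{k+1}$ forces $k\le d(x,y)$, whence $\Vert\phi(x)-\phi(y)\Vert_p\le 2d(x,y)^{1/p}+K^{1/p}\lesssim d(x,y)^{1/p}$ for $d(x,y)$ large, giving the right-hand $\lesssim_l$ inequality. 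For the lower bound, when $2s_k\le d(x,y)\le 2s_{k+1}$ we have $k\ge rad_\A^-(d(x,y)/2)-1\gtrsim rad_\A^-(d(x,y))$ for $d(x,y)$ large (the factor $2$ being absorbed since $rad_\A^-$ is a generalized inverse and we only need an $\asymp$-statement), so $\Vert\phi(x)-\phi(y)\Vert_p\gtrsim rad_\A^-(d(x,y))^{1/p}$; this yields the left-hand $\lesssim_l$ inequality. Finally, since $rad_\A$ is non-decreasing and unbounded, $rad_\A^-(t)\to\infty$ as $t\to\infty$, so $\phi$ is genuinely a coarse embedding, and $\omega_\phi(t)<\infty$ for all $t$ because the upper estimate holds globally (the finitely many scales $d(x,y)\le r_1$ being handled by the constant $K^{1/p}$ term).

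The only genuinely delicate point is the bookkeeping between the two scales $s_n$ and $2s_n$ in the lower estimate: one must verify that passing from $rad_\A^-(d(x,y)/2)$ to $rad_\A^-(d(x,y))$ costs only a multiplicative constant in the $\asymp$-sense, which is immediate because we are working modulo the relation $\asymp$ (and modulo the $\lesssim_l$ convention that only large distances matter). Everything else is a direct invocation of the already-proved lemmas; no new estimates are needed, so I would simply write ``Apply Lemma~\ref{lemmamenable} to get fundamental maps, then Lemma~\ref{Agluing}, and argue as in Proposition~\ref{Fembeddingc} (and Proposition~\ref{Bembeddingc}) to extract the compression and expansion functions in terms of $rad_\A^-$.''
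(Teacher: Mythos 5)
Your proposal is correct and is exactly the argument the paper intends (the paper merely calls it ``routine''): feed the fast adapted $A$-collection into Lemma~\ref{lemmamenable}, note that summability of $(\epsilon_n)$ gives $p$-summability of $(\epsilon_n^{1/p})$, apply Lemma~\ref{Agluing} with $r_n=n$, $s_n=rad_\A(n)$, and extract the bounds as in Proposition~\ref{Bembeddingc}. Your remark about the harmless factor $2$ in $rad_\A^{-}(d(x,y)/2)$ versus $rad_\A^{-}(d(x,y))$ is the right level of care, consistent with the paper's identification of compression functions up to $\asymp$.
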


An typical application of proposition \ref{Aembedding} is presented below.
\begin{example} Let $(\mathcal{T},d)$ be a connected metric tree with an infinite geodesic equipped with the Lebesgue measure. Choose a root $t_0\in \mathcal{T}$ and denote $\omega_{t_0}$ the infinite geodesic ray starting at $t_0$. For any point $t\in\mathcal{T}$ there exists a unique geodesic ray $\omega_t$ such that $\omega_t\cap\omega_{t_0}$ is infinite. Define $A_n(t)$ to be the geodesic segment of length $r_n\epsilon_n^{-1}$ on the ray $\omega_t$. \\Then $A_n(t)\subset B(t,r_n\epsilon_n^{-1})$ and $d(x,y)\le r_n$ implies $\vert A_n(x)\Delta A_n(y)\vert\le 2r_n$ while $\vert A_n(x)\cap A_n(y)\vert\ge r_n\epsilon_n^{-1}$. Choosing $r_n=n$, $\epsilon_n=\frac{1}{n\log^2(n)}$ one has $rad_{\A}(n)\le r_n\epsilon_n^{-1}=n^{2}\log(n)^{2}$ hence $h_{(2,2)}\ll rad_{\A}^-$. From proposition \ref{Aembedding} it follows that there is a coarse embedding  $\phi$ from $\mathcal{T}$ into $\ell_p(\mathcal{T},\R)$ so that
$$h_{(2,2)}(d_{\mathcal{T}}(x,y))^{1/p}\lesssim_l \Vert \phi(x)-\phi(y)\Vert_p\lesssim_l d_{\mathcal{T}}(x,y)^{1/p}.$$
\end{example}

\begin{remark}
When  $t$ is large, $t^s\le h_{(2,2)}(t)\le t$ for every $0<s<1/2$ and at the embedding provide a lower bound $\frac{1}{4}$ for the Hilbert compression. The best value that can be achieved from our embedding is $\frac{1}{2}$ but the Hilbert compression of a tree is $1$ (see for instance \cite{CampbellNiblo2005}). It is not surprising that our embedding cannot achieve the best compression since our approach can be applied in a wide setting and requires mild geometric properties for the domain space. Nevertheless it would be interesting to know wether or not is it possible to construct an A-collection inducing an embedding with compression $\frac{1}{2}$.
\end{remark}

\subsection{Metric locally compact amenable groups}\ \\

\smallskip

Every locally compact group admits a Haar measure denoted by the symbol $\vert\cdot\vert$ in the sequel. By a \textit{metric group} one means a group endowed with a left-invariant metric. If the topology induced by the metric is locally compact the group is said to be a \textit{ metric locally compact group}. A metric locally compact group $(G,d_G)$ is said to admit an \textit{$(\varepsilon,R)$-F\o lner set} $F$ if there exists a compact set $F\subset G$ such that $\frac{\vert F\Delta gF\vert}{\vert F\vert}\le \varepsilon$ for every $g\in G$ satisfying $d_G(e,g)\le R$. Let $\epsilon=(\epsilon_n)_{n\ge 1}$ and $r=(r_n)_{n\ge 1}$. An $(\epsilon,r)$-F\o lner sequence is a sequence of compact sets $(F_n)_{n\ge 1}$ such that for every $n\ge 1$ $F_n$ is an $(\epsilon_n,r_n)$-F\o lner set. There is a close relation between F\o lner sequences and $A$-collections.

\begin{lemma}\label{AFolner}Let $G$ be a metric group and $\epsilon_n<1$ for all $n\ge 1$.

\begin{enumerate}

\item If $(F_n)_{n\ge 1}$ is an $(\epsilon,r)$-F\o lner sequence then the collection $(gF_n)_{(g,n)\in G\times\N}$ is an $(\epsilon',r)$-A collection with $\epsilon'_n=\frac{\epsilon_n}{1-\epsilon_n}$.\\
\item If a collection of the form $(gF_n)_{(g,n)\in G\times\N}$ is an $(\epsilon,r)$-A collection then $(F_n)_{n\ge 1}$ is an $(\epsilon,r)$-F\o lner sequence.
\end{enumerate}
\end{lemma}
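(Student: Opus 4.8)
The plan is to reduce both implications to the left-invariance of the metric $d_G$ and of the Haar measure $|\cdot|$, which lets one convert any statement about a pair $(g,h)$ with $d_G(g,h)\le r_n$ into a statement about the single element $a=g^{-1}h$; indeed, multiplying on the left by $g$ gives $d_G(e,a)=d_G(g,h)\le r_n$, and $ga=h$, so $g(F_n\,\Delta\,aF_n)=gF_n\,\Delta\,hF_n$ and $g(F_n\cap aF_n)=gF_n\cap hF_n$. Everything then follows by inserting the Følner (resp. property A) inequality for $a$.

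For part (1), I would first check the support condition in the definition of an $(\epsilon',r)$-A collection. Since $F_n$ is compact, $S_n:=\sup_{f\in F_n}d_G(e,f)<\infty$, and if $z=gf\in gF_n$ then $d_G(g,z)=d_G(e,f)\le S_n$, so $gF_n\subset B_{d_G}(g,S_n)$. Next I would fix $n$ and $g,h\in G$ with $d_G(g,h)\le r_n$ and set $a=g^{-1}h$. By left-invariance of the Haar measure, $|gF_n\,\Delta\,hF_n|=|F_n\,\Delta\,aF_n|$ and $|gF_n\cap hF_n|=|F_n\cap aF_n|$. Since $d_G(e,a)\le r_n$, the Følner property gives $|F_n\,\Delta\,aF_n|\le\epsilon_n|F_n|$, while $|F_n\cap aF_n|=|F_n|-|F_n\setminus aF_n|\ge|F_n|-|F_n\,\Delta\,aF_n|\ge(1-\epsilon_n)|F_n|>0$ (this is the one place where $\epsilon_n<1$ is used, and it also guarantees the denominator does not vanish). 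Dividing, $\frac{|gF_n\,\Delta\,hF_n|}{|gF_n\cap hF_n|}\le\frac{\epsilon_n}{1-\epsilon_n}=\epsilon'_n$, which is exactly condition (2) of property A for $(gF_n)$ at level $n$.

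For part (2), I would run the argument backwards. Assuming $(gF_n)_{(g,n)}$ is an $(\epsilon,r)$-A collection, fix $n$ and $a\in G$ with $d_G(e,a)\le r_n$, and apply condition (2) of property A to the pair $g=e$, $h=a$ (which satisfies $d_G(g,h)=d_G(e,a)\le r_n$): this yields $\frac{|F_n\,\Delta\,aF_n|}{|F_n\cap aF_n|}\le\epsilon_n$. Since $|F_n\cap aF_n|\le|F_n|$ and the numerator is nonnegative, $\frac{|F_n\,\Delta\,aF_n|}{|F_n|}\le\frac{|F_n\,\Delta\,aF_n|}{|F_n\cap aF_n|}\le\epsilon_n$. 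As $a$ ranged over all elements with $d_G(e,a)\le r_n$, this says precisely that $F_n$ is an $(\epsilon_n,r_n)$-Følner set, hence $(F_n)_{n\ge1}$ is an $(\epsilon,r)$-Følner sequence.

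I do not anticipate a genuine obstacle: this is a direct unwinding of the two definitions. The only points needing care are (a) using compactness of $F_n$ to produce the finite radius $S_n$ demanded by the A-collection definition, and (b) the estimate $|F_n\cap aF_n|\ge(1-\epsilon_n)|F_n|>0$, which is why the hypothesis $\epsilon_n<1$ is imposed and which keeps every ratio that appears well defined.
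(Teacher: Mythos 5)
Your proof is correct and follows essentially the same route as the paper: both reduce to $a=g^{-1}h$ via left-invariance of the metric and Haar measure, establish $|F_n\cap aF_n|\ge(1-\epsilon_n)|F_n|$ to get the ratio $\epsilon_n/(1-\epsilon_n)$ in part (1), handle the support condition by compactness of $F_n$, and in part (2) use $|F_n\cap gF_n|\le|F_n|$ to pass from the property A ratio to the F\o lner ratio. No gaps to report.
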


\begin{proof}
(1) Remark that 
\begin{align*}
\frac{\vert gF_n\Delta hF_n\vert}{\vert gF_n\cap hF_n\vert} &=\frac{\vert F_n\Delta g^{-1}hF_n\vert}{\vert F_n\cap g^{-1}hF_n\vert}=\frac{\vert F_n\Delta g^{-1}hF_n\vert}{\vert F_n\vert}\frac{\vert F_n\vert}{\vert F_n\cap g^{-1}hF_n\vert}\\
\end{align*}

and 

\begin{align*}
1\ge \frac{\vert F_n\cap g^{-1}hF_n\vert}{\vert F_n\vert} &=\frac{\vert F_n\cup g^{-1}hF_n\vert}{\vert F_n\vert}-\frac{\vert F_n\Delta g^{-1}hF_n\vert}{\vert F_n\vert}\ge 1-\frac{\vert F_n\Delta g^{-1}hF_n\vert}{\vert F_n\vert}.\\
\end{align*}

Since $\frac{\vert F_n\Delta g^{-1}hF_n\vert}{\vert F_n\vert}\le \epsilon_n$ when $d_G(e,g^{-1}h)=d_G(g,h)\le r_n$ one obtains $\frac{\vert gF_n\Delta hF_n\vert}{\vert gF_n\cap hF_n\vert}\le \epsilon_n\ds\frac{1}{1-\epsilon_n}$ as long as $d_G(g,h)\le r_n$.

Morover the F\o lner sets are compact hence there exists a number $S_n$ such that $F_n$ is included in the ball centered at $e_G$ and of radius $S_n$. It implies that $gF_n\subset gB(e_G,S_n)=B(g,S_n)$ and condition $(1)$ of property A is satisfied.

\bigskip

(2)\begin{align*}
\frac{\vert F_n\Delta gF_n\vert}{\vert F_n\vert}=\frac{\vert F_n\Delta gF_n\vert}{\vert F_n\cap gF_n\vert}\frac{\vert F_n\cap gF_n\vert}{\vert F_n\vert}\le \frac{\vert F_n\Delta gF_n\vert}{\vert F_n\cap gF_n\vert}\le \epsilon_n\\
\end{align*}
if $d_G(e_G,g)\le r_n$
\end{proof}

An \textit{adapted fast $(\epsilon,r)$-F\o lner sequence} is an $(\epsilon,r)$-F\o lner sequence inducing an adapted fast A-collection. By left-invariance of the metric the F\o lner sets can always be assumed to contain the unital element $e_G$ of the group $G$. 
The \textit{circumradius}, denoted $rad_c$, of a set containing the unital element is the smallest radius of a circle containing it. For a F\o lner sequence the radial dilation function of the induced $A$-collection is simply given by $rad_c(F_n)$. One of the numerous characterization of amenability in \cite{Pier1984} is:\\ 
A locally compact group $G$ is amenable if and only if for every $\varepsilon>0$ and every compact subset $K$ of $G$ there exists a compact subset $F$ in $G$ such that $0<\vert F\vert<\infty$ and $\frac{\vert F\Delta gF\vert}{\vert F\vert}<\varepsilon$ for every $g\in K$.\\
Clearly, every metric locally compact group $(G,d_G)$ is amenable if and only if it admits a fast adapted F\o lner sequence. Suppose that $(G,d_G)$ is amenable then one can associate a function 
\begin{align*}
R\o l_G(n):=\min\{ & rad_c(F); \textrm{ there is a compact subset F of G such that} \\
 & \frac{\vert F\Delta gF\vert}{\vert F\vert}\le \frac{1}{n\log(n)^2} \textrm{ whenever } d_G(e,g)\le n\}
\end{align*}
This function shall be called the \textit{radial F\o lner function} of $G$.

\begin{proposition}\label{AGembedding}
Let $p\ge 1$. A metric locally compact amenable group $G$ admits a coarse embedding  $\phi$ from $G$ into $\ell_p(G,\R)$ such that
$$R\o l_G^{-}(d_G(x,y))^{1/p}\lesssim_l \Vert \phi(x)-\phi(y)\Vert_p\lesssim_l d_G(x,y)^{1/p}$$
\end{proposition}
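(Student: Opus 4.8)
The plan is to deduce Proposition \ref{AGembedding} from Proposition \ref{Aembedding} by manufacturing, out of amenability, an adapted fast $A$-collection on $G$ whose radial dilation function is the radial F\o lner function $R\o l_G$. Once such a collection is in hand, the construction of $\phi$, together with both estimates and the coarseness, is already packaged in Proposition \ref{Aembedding} (which internally runs Lemma \ref{lemmamenable} to produce the fundamental maps and then Lemma \ref{Agluing}).

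First I would fix the scales $r_n=n$ and $\epsilon_n=\frac{1}{n\log(n)^2}$, so that $(\epsilon_n)_{n\ge1}$ is summable. By the characterization of amenability recalled just before the statement, for every $n$ the family of compact sets $F\subset G$ with $\frac{\vert F\Delta gF\vert}{\vert F\vert}\le\epsilon_n$ for all $g$ with $d_G(e,g)\le n$ is nonempty; hence the minimum defining $R\o l_G(n)$ is finite, and I choose $F_n$ realizing it (or within a factor $2$ of it). By left-invariance of the metric we may also arrange $e_G\in F_n$. The sequence $(F_n)_{n\ge1}$ is then an adapted fast $(\epsilon,r)$-F\o lner sequence with $rad_c(F_n)=R\o l_G(n)$.

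Next I would invoke Lemma \ref{AFolner}(1): the collection $\A=(gF_n)_{(g,n)\in G\times\N}$ is an $(\epsilon',r)$-$A$ collection with $\epsilon'_n=\frac{\epsilon_n}{1-\epsilon_n}$. Since $\epsilon_n\to 0$ we have $\epsilon'_n\le 2\epsilon_n$ for large $n$, so $(\epsilon'_n)_{n\ge1}$ is summable and $\A$ is an adapted fast $A$-collection. Because $F_n\subset B(e_G,rad_c(F_n))$ and the metric is left-invariant, $gF_n\subset B(g,rad_c(F_n))=B(g,R\o l_G(n))$, while no smaller radius works for every $g$; thus the radial dilation function of $\A$ is $rad_\A=R\o l_G$ (after passing to the nondecreasing hull if necessary). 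Applying Proposition \ref{Aembedding} to $\MM=G$ and this $\A$ produces a coarse embedding $\phi\colon G\to\ell_p(G,\R)$, $\phi(x)=(\varphi_n(x)-\varphi_n(t_0))_{n\ge1}$, satisfying $rad_\A^-(d_G(x,y))^{1/p}\lesssim_l\Vert\phi(x)-\phi(y)\Vert_p\lesssim_l d_G(x,y)^{1/p}$, and substituting $rad_\A=R\o l_G$ yields the claimed inequality. Coarseness is part of the conclusion of Proposition \ref{Aembedding}; it reflects the fact that $R\o l_G(n)<\infty$ for every $n$, which forces $\Vert\phi(x)-\phi(y)\Vert_p\ge 2k^{1/p}$ whenever $2\,rad_\A(k)\le d_G(x,y)$ and hence $\rho_\phi(t)\to\infty$.

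The work is essentially bookkeeping: one must check that a single compact set $F_n$ can simultaneously meet the F\o lner inequality at scale $n$ with the prescribed $\epsilon_n$ and attain the circumradius minimum defining $R\o l_G(n)$ (immediate, since that minimum is taken over exactly such sets), that $(\epsilon'_n)_{n\ge1}$ remains summable, and that the radial dilation function of $(gF_n)$ coincides with $R\o l_G$. After that, there is nothing to do beyond quoting Proposition \ref{Aembedding}, whose proof already absorbs the harmless factors of $2$ coming from the support-disjointness criterion of Lemma \ref{Agluing} and the generalized-inverse manipulations (from $2\,rad_\A(k)\le d_G(x,y)<2\,rad_\A(k+1)$ one extracts $k\ge rad_\A^-(d_G(x,y)/2)-1$, which is $\gtrsim rad_\A^-(d_G(x,y))$ for $d_G(x,y)$ large). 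I do not anticipate a genuine obstacle here; the substantive content lies in Lemma \ref{AFolner} and Proposition \ref{Aembedding}, both already established, and the present statement is the translation of the metric-measured-space case to the group setting.
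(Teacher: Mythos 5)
Your proposal is correct and follows essentially the same route as the paper: choose an adapted fast F\o lner sequence whose circumradii realize (up to a constant) the radial F\o lner function, convert it into an adapted fast $A$-collection via Lemma \ref{AFolner} (the paper normalizes $\epsilon_n\le 1/2$ to get a $(2\epsilon,r)$-A collection, matching your $\epsilon'_n\le 2\epsilon_n$ observation), and then quote Proposition \ref{Aembedding}. The extra bookkeeping you supply about $rad_\A=R\o l_G$ is exactly what the paper leaves implicit in the sentence preceding the proposition.
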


\begin{proof}
Without loss of generality one can always assume that the adapted fast F\o lner sequence is an $(\epsilon,r)$-F\o lner sequence such that $\epsilon_n\le 1/2$. From Lemma \ref{AFolner} the collection $(gF_n)_{(g,n)\in G\times\N}$ is a $(2\epsilon,r)$-A collection and one can apply Proposition \ref{Aembedding}
\end{proof}
In particular,

\begin{corollary}\label{Austinquestion}
Let $p\ge 1$. A metric locally compact amenable group $(G,d_G)$ with radial F\o lner function growing at most exponentially admits a coarse embedding  $\phi$ from $G$ into $\ell_p(G,\R)$ such that
$$\log(d_G(x,y))^{1/p}\lesssim_l \Vert \phi(x)-\phi(y)\Vert_p\lesssim_l d_G(x,y)^{1/p}$$ 
\end{corollary}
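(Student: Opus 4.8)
The plan is to obtain Corollary \ref{Austinquestion} as an immediate consequence of Proposition \ref{AGembedding}, the only additional ingredient being an elementary comparison between the growth of the radial F\o lner function $R\o l_G$ and the growth of its generalized inverse. So first I would simply invoke Proposition \ref{AGembedding}: since $(G,d_G)$ is a metric locally compact amenable group, it admits a coarse embedding $\phi\colon G\to\ell_p(G,\R)$ with
$$R\o l_G^{-}(d_G(x,y))^{1/p}\lesssim_l \Vert\phi(x)-\phi(y)\Vert_p\lesssim_l d_G(x,y)^{1/p}.$$
The right-hand inequality is already exactly the upper estimate claimed in the corollary, so everything reduces to replacing $R\o l_G^{-}$ by a logarithm in the left-hand inequality.

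Next I would exploit the hypothesis that $R\o l_G$ grows at most exponentially, i.e.\ there are constants $C\ge 1$ and $a>1$ with $R\o l_G(n)\le C a^n$ for all $n\ge 1$. Extending $R\o l_G$ to a non-decreasing function on $\R$ by piecewise interpolation and passing to the generalized inverse reverses this inequality: for $t$ large enough one gets $R\o l_G^{-}(t)\ge \log(t/C)/\log a\gtrsim \log(t)$. Substituting $t=d_G(x,y)$ and using monotonicity of $u\mapsto u^{1/p}$ yields $\log(d_G(x,y))^{1/p}\lesssim R\o l_G^{-}(d_G(x,y))^{1/p}$ whenever $d_G(x,y)$ is large, and chaining this with the estimate from Proposition \ref{AGembedding} produces
$$\log(d_G(x,y))^{1/p}\lesssim_l \Vert\phi(x)-\phi(y)\Vert_p\lesssim_l d_G(x,y)^{1/p},$$
which is precisely the assertion (the $\lesssim_l$ on the left being appropriate since the lower bound is only claimed for large distances).

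I do not anticipate a genuine obstacle here: the statement is essentially a specialization of Proposition \ref{AGembedding}. The only point that needs a little care is the bookkeeping around the $\lesssim_l$ convention and the definition of the generalized inverse, namely verifying that ``at most exponential growth of $R\o l_G$'' translates into ``at least logarithmic growth of $R\o l_G^{-}$ for large arguments''. This is the same elementary observation already recorded right after Proposition \ref{Bembeddings} for sequences $(s_n)_{n\ge1}$ growing at most exponentially, so one may either repeat that short computation or refer back to it.
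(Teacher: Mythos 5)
Your proposal is correct and is essentially the paper's own argument: the corollary is stated there as an immediate specialization of Proposition \ref{AGembedding}, with the at-most-exponential growth of the radial F\o lner function converted, exactly as you do, into a logarithmic lower bound for its generalized inverse at large arguments — the same elementary observation already recorded after Proposition \ref{Bembeddings}.
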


\begin{remark}
Corollary \ref{Austinquestion} has to be compared to Tim Austin's question
\cite{Austin2011} whether or not every \textit{finitely generated amenable} group admits an embedding into $L_p$ ($p\ge 1$) of the form
$$\log (d(x,y))\lesssim \Vert \phi(x)-\phi(y)\Vert_p\lesssim d(x,y) \textrm{ for all } x,y\in G.$$
It turns out that Austin's question has been answered negatively by Olshanskii and Osin \cite{OlshanskiiOsin}.
Corollary \ref{Austinquestion} tells us that at least for groups with F\o lner sequences not expanding too fast it is true (up to the exponent $\frac{1}{p}$). 
\end{remark}
\begin{remark} The \textit{volumic F\o lner function} for locally compact groups extending the classical F\o lner function $F\o l$ originally introduced for finitely generated groups \cite{Vershik1982} could be defined as follows: 
\begin{align*}
V\o l_G(n):=\min\{ & \vert F\vert; \textrm{ there is a compact subset F of G such that} \\
 & \frac{\vert F\Delta gF\vert}{\vert F\vert}\le \frac{1}{n} \textrm{ whenever } d_G(e,g)\le 1\}
\end{align*} 
Clearly for finitely generated groups $V\o l\sim F\o l$.
A simple volume estimate for a finitely generated amenable group provides the lower bound $R\o l(n)\ge \log(F\o l(n))$. It is worth mentioning that Erschler proved \cite{Erschler2003} that the group $G=((\dots((\Z\wr\Z)\wr\Z)\dots\wr\Z)\dots))$ (k times) has F\o lner function growing at least as fast as $n^{n^{n^{\dots^n}}}$. This group is an example of an amenable group with a radial F\o lner function which is super-exponential and it forces the compression function, from the coarse embedding constructed using geometric properties of F\o lner sets, to grow very slowly. Moreover it is possible to construct for an arbitrary monotone increasing $\Z+$-valued function $\kappa$, a finitely generated amenable group such that $\kappa\ll F\o l$ (see \cite{Gromov2008}).
\end{remark}

\subsection{Examples}\ \\

Two basic examples are given below to illustrate the utilization of proposition
\ref{AGembedding}. It seems clear that a thorough investigation of radial F\o lner functions for amenable groups deserves full attention but this interesting topic shall be differ to a subsequent article. Recall that the inverse of the function $t\mapsto t^a\log^b(t)$ is denoted $h_{(a,b)}$ and the asymptotics of this function are $t^{1/(a+c)}\le h_{(a,b)}(t)\le t^{1/a}$ for every $c>0$.

\begin{example} Let $k\ge 1$ and $G=\R^k$ equipped with the distance induced by the Euclidean norm.

\medskip

\noindent Consider $F_n=B(0,r_n\epsilon_n^{-1/k})$. For elements $g$ of length $\Vert g\Vert$ less than $r_n$ one has $\vert F_n\Delta gF_n\vert\le (2r_n)^k$. Hence $\frac{\vert F_n\Delta gF_n\vert}{\vert F_n\vert}\le\frac{(2r_n)^k}{(2r_n\epsilon_n^{-1/k})^k}=\epsilon_n$ whenever $\Vert g\Vert\le r_n$. Choosing $r_n=n$, $\epsilon_n=\frac{1}{n\log^2(n)}$ gives $R\o l(n)\le r_n\epsilon_n^{-1/k}=n^{(k+1)/k}\log(n)^{2/k}$ and a coarse embedding  $\phi$ from $G$ into $\ell_p(G,\R)$ such that
$$h_{(\frac{k+1}{k},\frac{2}{k})}(\Vert x-y\Vert)^{1/p}\lesssim_l \Vert \phi(x)-\phi(y)\Vert_p\lesssim_l \Vert x-y\Vert^{1/p}.$$

\end{example}

\begin{example} Let $G=\mathbb{H}_{\R}$ be the real Heisenberg group endowed with its left-invariant Carnot-Carath\'{e}odory metric $d_{cc}$.

\medskip

\noindent The Heisenberg group can be described as the group of $3\times 3$ matrices of the form $$g(x,y,z)=\left(\begin{array}{ccc}
1 & x & z\\
0 & 1 & y\\
0 & 0 & 1\\
\end{array}\right)$$
where $x,y,z\in \R$. The following geometric properties of the Heisenberg group can be found in \cite{Roe2003}. Any left-invariant metric is quasi-isometrically equivalent to the left-invariant metric given by $d(g(x,y,z),g(0,0,0))=\vert x\vert+\vert y\vert +\vert z\vert^{1/2}$.

\medskip

\noindent Consider $F_n=B_{d}(0,\frac{1}{\epsilon_n})$. For every $r_n>0$ and every $g\in \mathbb{H}_{\R}$ such that $d(e,g)\le r_n$ one has $\frac{\vert F_n\Delta gF_n\vert}{\vert F_n\vert}\lesssim\frac{\epsilon_n^{-3}}{\epsilon_n^{-4}}=\epsilon_n$. Taking $r_n=n$, $\epsilon_n=\frac{1}{n\log^2(n)}$ provides $R\o l(n)\le \frac{1}{\epsilon_n}=n\log^2(n)$ and a coarse embedding  $\phi$ from $G$ into $\ell_p(G,\R)$ such that
$$h_{(1,2)}(d_{cc}(x,y))^{1/p}\lesssim_l \Vert \phi(x)-\phi(y)\Vert_p\lesssim_l d_{cc}(x,y)^{1/p}.$$
\end{example}

\smallskip

\begin{remark}
According to the asymptotics of $h_{(1,2)}$, $\alpha_{\ell_p}(\mathbb{H}_\R)\ge 1/p$ and in particular $\alpha_{\ell_1}(\mathbb{H}_\R)=1$. It is worth mentioning that $\alpha_{\ell_p}(\mathbb{H}_\R)=1$ for any $p\ge 1$ ($\mathbb{H}_\R$ is doubling) and $\mathbb{H}_\R$ does not bi-Lipschitzly (actually quasi-isometrically) embed into $L_1$ or any superreflexive Banach space (see \cite{CheegerKleinerNaor2011} and \cite{AustinNaorTessera2010}). Using algebraic techniques and sequences of controlled F\o lner pairs Tessera \cite{Tessera2011} was able to prove better estimates for the compression modulus for certain groups and in particular the Heisenberg group.
\end{remark}

\section{Conclusion}
In the next page, we gather, for further reference, two tables where are collected the results, either from the current paper, from articles quoted in the core of the text or classical results, that are related to the Lebesgue compression of the classical Lebesgue sequence and function spaces. The attentive reader will easily spot the natural questions that are still open. We decided to isolate in the first table the compression estimates involving only Banach spaces as domain and target spaces, the Banach space setting being the most intriguing and relevant one for the geometric group theorists community. In the second table we include results featuring a mix of quasi-Banach spaces and Banach spaces which are of interest to nonlinear Banach space geometers.

\vskip2cm

\noindent{\it Acknowledgments.} I would like to thank the participants and organizers of the ``Groups and Dynamics Seminar'' at Texas A\& M University, in particular Slava Grigorschuk, for their useful comments and remarks that helped improve significantly the group theoretic section of the article. I am very grateful to Tim Austin for drawing to my attention the paper from Olshanskii and Osin. Finally I would like to express my deepest gratitude to Bill Johnson for his constant support and for simply sharing with me numerous ``mathematical moments''.
\newpage{}

\subsection{The Banach space setting}\ \\
\begin{tiny}
\begin{table}[ht]
\begin{center}
\begin{tabular}{|c|c|c|}
\hline
 & &\\
$\MM$ &  $\ell_q$-compression of $\MM$ & $L_q$-compression of $\MM$\\
 & &\\
 \hline
 & & \\
 $\ell_p$&  $\begin{array}{l}
                          \alpha_{\ell_q}(\ell_p)=0 \textrm{ if } 2\le q<p \textrm{ or } 1\le q\le 2< p\\
                           \\
                           \alpha_{\ell_q}(\ell_2)=1 \textrm{ if } 1\le q\le 2  \\
                 
                          \\
                          \alpha_{\ell_q}(\ell_p)= \frac{p}{q} \textrm{ if } 1\le p<q<\infty  \\
                         
                           \\
                            \frac{p}{2}\le \alpha_{\ell_q}(\ell_p)\le 1 \textrm{ if } 1\le q<p<2
                                                     \end{array}$ & 
             $\begin{array}{l}
             \alpha_{L_q}(\ell_p)=0 \textrm{ if } 2\le q<p \textrm{ or } 1\le q\le 2< p\\
                           \\
                           \alpha_{L_q}(\ell_2)=1 \textrm{ if } 1\le q\le \infty  \\
                           \\
                           \alpha_{L_q}(\ell_p)=1 \textrm{ if } 1\le q\le p\le 2\\
                         
                          \\
                          \alpha_{L_q}(\ell_p)= \frac{p}{\min\{q,2\}} \textrm{ if } 1\le p\le 2 \textrm{ and } q\ge p  \\
                         \\
                            \frac{p}{q}\le \alpha_{L_q}(\ell_p)\le 1 \textrm{ if } 2< p\le q<\infty  \\
                            \\

                                                     \end{array}$\\     
\hline
 & & \\
 $L_p$&  $\begin{array}{l} \alpha_{\ell_q}(L_2)=\alpha_{\ell_q}(\ell_2)\\
                          \\
                           \alpha_{\ell_q}(L_p)=0 \textrm{ if } 2\le q<p \textrm{ or } 1\le q\le 2< p\\
\\
                            \frac{p}{2}\le \alpha_{\ell_q}(L_p)\le 1 \textrm{ if } 1\le p,q\le2\\
                            \\
                            0\le \alpha_{\ell_q}(L_q)\le \frac{2}{q} \textrm{ if } q>2\\
                            \\

                                                                                \end{array}$ & 
             $\begin{array}{l}
             \alpha_{L_q}(L_2)=\alpha_{L_q}(\ell_2)\\
                          \\
             \alpha_{L_q}(L_p)=0 \textrm{ if } 2\le q<p \textrm{ or } 1\le q\le 2< p\\
                                   \\
                            \alpha_{L_q}(L_p)=1 \textrm{ if } 1\le q\le p\le 2 \\
                            \\
                            \alpha_{L_q}(L_p)=\frac{p}{\min\{q,2\}} \textrm{ if } 1\le p\le 2 \textrm{ and } q\ge p\\
                            \\
                                                        \frac{p}{q}\le \alpha_{L_q}(L_p)\le 1 \textrm{ if } 2< p\le q<\infty  \\
                            \\

                                                     \end{array}$\\
 & & \\      
\hline
\end{tabular}
\end{center}
\end{table}
\end{tiny}

\subsection{The mixed quasi-Banach space and Banach space setting}\ \\ 

\begin{tiny}
\begin{table}[ht]
\begin{center}
\begin{tabular}{|c|c|c|}
\hline
 & &\\
$\MM$ &  $\ell_q$-compression of $\MM$ & $L_q$-compression of $\MM$\\
 & &\\
 \hline
 & & \\
 $\ell_p$&  $\begin{array}{l}
                          \alpha_{\ell_q}(\ell_p)=0 \textrm{ if } 0<q<1 \textrm{ and }p>2\\
                                                     \\
                          q^2\le \alpha_{\ell_q}(\ell_2)\le 1 \textrm{ if } 0< q<1  \\
                          \\
                     \frac{pq^2}{2}\le \alpha_{\ell_q}(\ell_p)\le 1 \textrm{ if } 0< q\le 1< p< 2  \\
                           \\
                           \alpha_{\ell_q}(\ell_p)=1 \textrm{ if } 0<p<q\le 1  \\
                                                  \\
                                \frac{q^2}{2}\le \alpha_{\ell_q}(\ell_p)\le 1 \textrm{ if } 0< q<p\le 1  \\
                          \\
                          \alpha_{\ell_q}(\ell_p)= \frac{1}{q} \textrm{ if } 0< p\le 1<q<\infty\\
                          \\

                                                     \end{array}$ & 
             $\begin{array}{l}
             \alpha_{L_q}(\ell_p)=0 \textrm{ if }0<q<1 \textrm{ and }p>2\\
                           \\
                           \alpha_{L_q}(\ell_2)=1 \textrm{ if } 0< q<1  \\
                           \\
                 \alpha_{L_q}(\ell_p)=1 \textrm{ if } 0< q\le 1< p< 2  \\
                           \\

                             \alpha_{L_q}(\ell_p)=1 \textrm{ if } 0< p,q\le 1  \\
                                                   \\
                         \alpha_{L_q}(\ell_p)=\frac{1}{\min\{q,2\}} \textrm{ if } 0< p\le 1<q<\infty\\
                                                   \\

                                                     \end{array}$\\     
\hline
 & & \\
 $L_p$&  $\begin{array}{l}
                           \alpha_{\ell_q}(L_p)=0 \textrm{ if } 0<q<1 \textrm{ and }p>2\\
\\
                           \frac{pq^2}{2}\le \alpha_{\ell_q}(L_p)\le 1 \textrm{ if } 0< q\le 1< p< 2  \\
                           \\  
                            \frac{q^2}{2}\le \alpha_{\ell_q}(L_p)\le 1 \textrm{ if } 0< p,q\le 1  \\
                          \\
                                                    \frac{1}{\max\{q,2\}}\le \alpha_{\ell_q}(L_p)\le \frac{1}{q} \textrm{ if } 0< p\le 1<q<\infty\\
                          \\
                                                                                                        \end{array}$ & 
             $\begin{array}{l}
             \alpha_{L_q}(L_p)=0 \textrm{ if } 0<q<1 \textrm{ and }p>2\\
                           \\
                       \alpha_{L_q}(L_p)=1 \textrm{ if } 0< q\le 1< p< 2  \\
                           \\  
                            \alpha_{L_q}(L_p)=1 \textrm{ if } 0< p,q\le 1  \\
                                                   \\
                      \alpha_{L_q}(L_p)=\frac{1}{\min\{q,2\}} \textrm{ if } 0< p\le 1<q<\infty\\
                                                   \\
                                                     \end{array}$\\
 & & \\      
\hline
\end{tabular}
\end{center}
\end{table}
\end{tiny}
\newpage{}

\section{Appendix}
\paragraph{\textbf{Mazur maps}} Let $0<p,q<\infty$ and define
$$\begin{array}{rccl}
M_{p,q}\colon& \ell_p & \to & \ell_q\\
 & (x_n)_{n\ge 1}&\mapsto & (sgn(x_n)\vert x_n\vert^{p/q})_{n \ge 1}
\end{array}$$
If $\vert x\vert_{p}=1$ then $\vert M_{p,q}(x)\vert_{q}=1$. The Mazur map estimates are gathered in Proposition \ref{Mazur} below.
\begin{proposition}[Mazur map estimates]\label{Mazur}\ \\
\begin{enumerate}
\item $1\le q<p<\infty$, $$ \Vert x-y\Vert_p^{p/q}\lesssim \Vert \M_{p,q}(x)-M_{p,q}(y)\Vert_q\lesssim\Vert x-y\Vert_p $$
\item $0< q\le 1\le p<\infty$, $$ \Vert x-y\Vert_p^{p}\lesssim d_q(M_{p,q}(x),M_{p,q}(y))\lesssim\Vert x-y\Vert_p^{q} $$
\item $0< q<p\le 1$, $$ d_p(x,y)\lesssim d_q(\M_{p,q}(x),M_{p,q}(y))\lesssim d_p(x,y)^{q/p}$$
\end{enumerate}
\end{proposition}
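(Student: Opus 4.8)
Throughout, $x$ and $y$ are understood to lie on the unit sphere of $\ell_p$, so that $\sum_n|x_n|^p=\sum_n|y_n|^p=1$ (this is the normalization in which the sentence preceding the statement places us). The plan is to reduce all six inequalities to two elementary scalar estimates for the one-variable Mazur map $m_\theta(t):=\sgn(t)\,|t|^\theta$ and then to sum coordinatewise. Write $\theta=p/q$; in each of the three regimes one has $\theta\ge 1$, while the inverse Mazur map $M_{q,p}$ has exponent $1/\theta=q/p\le 1$. Since $M_{q,p}=M_{p,q}^{-1}$ on the unit spheres, each \emph{left-hand} (lower) inequality of (1)--(3) is precisely the corresponding \emph{right-hand} (upper) inequality applied to $M_{q,p}$ with the roles of $p$ and $q$ exchanged; hence it suffices to prove, in each regime, the upper estimate, once for an exponent $\theta\ge1$ and once for an exponent $\sigma\le1$.

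First I would record the two scalar inequalities. For $\theta\ge1$ and $a,b\in\R$,
\[
|m_\theta(a)-m_\theta(b)|\le C_\theta\,\max(|a|,|b|)^{\theta-1}\,|a-b|,\qquad C_\theta:=\max(\theta,2),
\]
which for $a,b$ of equal sign is the mean value theorem and for $a,b$ of opposite signs follows from $|a|^{\theta}+|b|^{\theta}\le 2\max(|a|,|b|)^{\theta-1}(|a|+|b|)$ together with $|a-b|=|a|+|b|$. For $0<\sigma\le1$ and $a,b\in\R$,
\[
|m_\sigma(a)-m_\sigma(b)|\le 2^{\,1-\sigma}\,|a-b|^{\sigma},
\]
the equal-sign case being subadditivity of $t\mapsto t^{\sigma}$ and the opposite-sign case the power-mean bound $|a|^{\sigma}+|b|^{\sigma}\le 2^{1-\sigma}(|a|+|b|)^{\sigma}$.

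For the upper estimates, raise the first scalar inequality to the power $q$, sum over $n$, and apply H\"older's inequality with conjugate exponents $\tfrac{p}{p-q}$ and $\tfrac{p}{q}$ (admissible since $0<q<p$):
\[
\sum_n|m_\theta(x_n)-m_\theta(y_n)|^{q}\le C_\theta^{q}\Big(\sum_n\max(|x_n|,|y_n|)^{p}\Big)^{\frac{p-q}{p}}\Big(\sum_n|x_n-y_n|^{p}\Big)^{\frac{q}{p}}.
\]
Since $\max(|x_n|,|y_n|)^p\le|x_n|^p+|y_n|^p$ and $x,y$ are on the unit sphere, the first factor is $\le 2^{(p-q)/p}$. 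Reading the left-hand side as $\Vert M_{p,q}(x)-M_{p,q}(y)\Vert_q^{q}$ when $q\ge1$ and as $d_q(M_{p,q}(x),M_{p,q}(y))$ when $q\le1$, and the last factor as $\Vert x-y\Vert_p^{q}$ when $p\ge1$ and as $d_p(x,y)^{q/p}$ when $p\le1$, one obtains --- after a $q$-th root in regime (1) --- the right-hand inequalities $\lesssim\Vert x-y\Vert_p$, $\lesssim\Vert x-y\Vert_p^{q}$ and $\lesssim d_p(x,y)^{q/p}$ in (1), (2) and (3) respectively.

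For the lower estimates, run the same computation for $M_{q,p}$, whose exponent $\sigma=q/p$ lies in $(0,1]$ in every regime; now the second scalar inequality is used and no H\"older step is needed. Raising $|m_\sigma(w_n)-m_\sigma(z_n)|\le 2^{1-\sigma}|w_n-z_n|^{\sigma}$ to the power $p$ and summing gives $\sum_n|m_\sigma(w_n)-m_\sigma(z_n)|^{p}\le 2^{\,p-q}\sum_n|w_n-z_n|^{q}$. Taking $w=M_{p,q}(x)$, $z=M_{p,q}(y)$, so that $M_{q,p}(w)=x$ and $M_{q,p}(z)=y$, and reading the left side as $\Vert x-y\Vert_p^{p}$ (regimes (1),(2)) or $d_p(x,y)$ (regime (3)) and the right side as $\Vert M_{p,q}(x)-M_{p,q}(y)\Vert_q^{q}$ (regime (1)) or $d_q(M_{p,q}(x),M_{p,q}(y))$ (regimes (2),(3)), one reads off the three left-hand inequalities; e.g.\ in (1), $\Vert x-y\Vert_p^{p}\lesssim\Vert M_{p,q}(x)-M_{p,q}(y)\Vert_q^{q}$, that is, $\Vert x-y\Vert_p^{p/q}\lesssim\Vert M_{p,q}(x)-M_{p,q}(y)\Vert_q$. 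The computations are all routine; the only mildly delicate points are the opposite-sign cases of the two scalar inequalities and arranging the H\"older exponents so that the $\max(|x_n|,|y_n|)$ factor is absorbed by the unit-sphere normalization. The borderline $p=q$ (which occurs only as $p=q=1$ inside regime (2)) is trivial, since then $M_{p,q}$ is the identity.
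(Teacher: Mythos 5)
Your proof is correct and follows essentially the same route as the paper's: the scalar estimate $\vert\,\mathrm{sgn}(a)\vert a\vert^{p/q}-\mathrm{sgn}(b)\vert b\vert^{p/q}\vert\le \tfrac{p}{q}\vert a-b\vert\max\{\vert a\vert,\vert b\vert\}^{p/q-1}$, then H\"older with exponents $\tfrac{p}{q}$ and $\tfrac{p}{p-q}$ and the unit-sphere normalization to absorb the $\max$ factor, exactly as in the paper's Lemma 5.3. Your derivation of the lower bounds via the inverse map $M_{q,p}$ and the exponent $\sigma=q/p\le 1$ is just a reformulation of the paper's pointwise inequality $\vert\,\mathrm{sgn}(a)\vert a\vert^{\alpha}-\mathrm{sgn}(b)\vert b\vert^{\alpha}\vert\ge c_\alpha\vert a-b\vert^{\alpha}$ with $\alpha=p/q$, obtained by substituting $a\mapsto \mathrm{sgn}(a)\vert a\vert^{1/\alpha}$, so the two arguments coincide in substance.
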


\bigskip

\begin{remark}
In the case $p<q$ the inequalities are reversed.\end{remark}

The proof of the next lemma is left to the reader.
\begin{lemma}
For $\alpha\ge1$ we have 

\begin{enumerate}
\item $\vert sgn(a)\vert a\vert^\alpha-sgn(b)\vert b\vert^\alpha\vert\ge c_\alpha\vert a-b\vert^\alpha$\\
\item $\vert sgn(a)\vert a\vert^\alpha-sgn(b)\vert b\vert^\alpha\vert\le\alpha\vert a-b\vert\max\{\vert a\vert,\vert b\vert\}^{\alpha-1}$
\end{enumerate}
\end{lemma}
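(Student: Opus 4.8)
The plan is to write $f(t)=\sgn(t)\vert t\vert^{\alpha}$ and to argue according to the position of $a,b$ relative to $0$. Since $f$ is odd, the case in which $a$ and $b$ have the same sign (counting $0$ as having either sign) reduces, after possibly replacing $(a,b)$ by $(-a,-b)$, to an inequality for the map $t\mapsto t^{\alpha}$ on $[0,\infty)$; the remaining case is $a>0>b$ (or, symmetrically, $b>0>a$), where $\vert f(a)-f(b)\vert=a^{\alpha}+\vert b\vert^{\alpha}$ and $\vert a-b\vert=a+\vert b\vert$.

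For inequality (2): if $\alpha=1$ it is trivial, so suppose $\alpha>1$. Then $f$ is continuously differentiable on all of $\R$, including at the origin, with $f'(t)=\alpha\vert t\vert^{\alpha-1}$, so the mean value theorem yields $\vert f(a)-f(b)\vert=\alpha\vert\xi\vert^{\alpha-1}\vert a-b\vert$ for some $\xi$ between $a$ and $b$, and $\vert\xi\vert\le\max\{\vert a\vert,\vert b\vert\}$ finishes it. If one prefers to avoid invoking differentiability at $0$, the opposite-sign case can be done by hand: assuming $a\ge\vert b\vert$, one has $a^{\alpha}+\vert b\vert^{\alpha}\le\alpha a^{\alpha}+\alpha\vert b\vert a^{\alpha-1}=\alpha(a+\vert b\vert)\max\{a,\vert b\vert\}^{\alpha-1}$, using $\alpha\ge1$ and $\vert b\vert^{\alpha}\le\vert b\vert a^{\alpha-1}$.

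For inequality (1): in the same-sign case, with $a\ge b\ge0$ say, write $a=(a-b)+b$ and use the superadditivity $(u+v)^{\alpha}\ge u^{\alpha}+v^{\alpha}$ of $t\mapsto t^{\alpha}$ on $[0,\infty)$ (immediate from $s^{\alpha}+(1-s)^{\alpha}\le s+(1-s)=1$ for $s\in[0,1]$ and $\alpha\ge1$) to get $a^{\alpha}-b^{\alpha}\ge(a-b)^{\alpha}$, so the constant $1$ works here. In the opposite-sign case, convexity of $t\mapsto t^{\alpha}$ on $[0,\infty)$ gives $\frac{1}{2}(a^{\alpha}+\vert b\vert^{\alpha})\ge(\frac{1}{2}(a+\vert b\vert))^{\alpha}$, that is $a^{\alpha}+\vert b\vert^{\alpha}\ge2^{1-\alpha}(a+\vert b\vert)^{\alpha}$. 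Hence $c_{\alpha}=2^{1-\alpha}$, which is at most $1$ because $\alpha\ge1$, works in both cases.

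There is no genuine difficulty here; the only minor points are the mild non-smoothness of $f$ at the origin when $\alpha$ is not an integer (handled either by noting $f\in C^{1}(\R)$ as soon as $\alpha>1$, or by the explicit estimate in the opposite-sign case) and keeping straight the two elementary facts about $t\mapsto t^{\alpha}$ for $\alpha\ge1$ that drive the two halves of part (1): superadditivity on the half-line and convexity.
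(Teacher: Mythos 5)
Your argument is correct and complete: the mean value theorem (or the explicit estimate in the opposite-sign case) gives inequality (2), and the combination of superadditivity of $t\mapsto t^{\alpha}$ on $[0,\infty)$ for the same-sign case with convexity for the opposite-sign case gives inequality (1) with $c_{\alpha}=2^{1-\alpha}$, which is all the later Mazur map estimates require. The paper itself leaves this lemma without proof (``left to the reader''), and your treatment is exactly the standard one that is intended, so there is nothing to reconcile.
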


Proposition \ref{Mazur} is a direct consequence of Lemma \ref{inequalities}.
\begin{lemma}\label{inequalities}Let $0<q<p<\infty$\ \\
If $\vert x\vert_{p}=1$ and $\vert y\vert_{p}=1$ then $$\sum_{n=1}^{\infty}\vert x_n-y_n\vert^{p}\lesssim\sum_{n=1}^\infty\vert M_{p,q}(x)(n)-M_{p,q}(y)(n)\vert^q\lesssim \left(\sum_{n=1}^{\infty}\vert x_n-y_n\vert^{p}\right)^{q/p}$$ 
\end{lemma}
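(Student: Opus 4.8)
The plan is to write $\alpha:=p/q$, so that $\alpha>1$ (here we use $p\neq q$), the Mazur map acts coordinatewise by $t\mapsto \sgn(t)|t|^{\alpha}$, and $p=\alpha q$. With this notation $M_{p,q}(x)(n)-M_{p,q}(y)(n)=\sgn(x_n)|x_n|^{\alpha}-\sgn(y_n)|y_n|^{\alpha}$, so the two pointwise estimates of the lemma stated just above (proof left to the reader) apply at each coordinate with $a=x_n$, $b=y_n$. The whole proof then consists of raising those pointwise inequalities to the power $q$, summing over $n$, and — for the upper bound only — using Hölder's inequality together with the normalization $\sum_n|x_n|^p=\sum_n|y_n|^p=1$.

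For the left-hand inequality I would take part (1) of the preceding lemma, raise it to the power $q$, and sum: $|M_{p,q}(x)(n)-M_{p,q}(y)(n)|^q\ge c_\alpha^{q}|x_n-y_n|^{\alpha q}=c_\alpha^{q}|x_n-y_n|^{p}$, hence $\sum_n|M_{p,q}(x)(n)-M_{p,q}(y)(n)|^q\ge c_\alpha^{q}\sum_n|x_n-y_n|^{p}$. Note this half uses neither the normalization nor Hölder.

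For the right-hand inequality I would raise part (2) to the power $q$ to get $|M_{p,q}(x)(n)-M_{p,q}(y)(n)|^q\le \alpha^{q}|x_n-y_n|^{q}\max\{|x_n|,|y_n|\}^{(\alpha-1)q}$, sum over $n$, and split the resulting sum by Hölder with conjugate exponents $\alpha$ and $\alpha/(\alpha-1)$:
$$\sum_n|x_n-y_n|^{q}\max\{|x_n|,|y_n|\}^{(\alpha-1)q}\le\Big(\sum_n|x_n-y_n|^{\alpha q}\Big)^{1/\alpha}\Big(\sum_n\max\{|x_n|,|y_n|\}^{\alpha q}\Big)^{(\alpha-1)/\alpha}.$$
Since $\alpha q=p$ the first factor is exactly $\big(\sum_n|x_n-y_n|^{p}\big)^{q/p}$, while for the second factor one has $\max\{|x_n|,|y_n|\}^{p}=\max\{|x_n|^{p},|y_n|^{p}\}\le|x_n|^{p}+|y_n|^{p}$, so $\sum_n\max\{|x_n|,|y_n|\}^{p}\le\sum_n|x_n|^{p}+\sum_n|y_n|^{p}=2$ by the hypothesis. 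Collecting the constants yields $\sum_n|M_{p,q}(x)(n)-M_{p,q}(y)(n)|^q\le \alpha^{q}\,2^{(\alpha-1)/\alpha}\big(\sum_n|x_n-y_n|^{p}\big)^{q/p}$, which is the claimed estimate.

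There is no genuine obstacle; the single point needing care is choosing the Hölder exponents so that the weight $\max\{|x_n|,|y_n|\}^{(\alpha-1)q}$ is raised to precisely the power $p$, which is exactly what lets the unit-sphere hypothesis absorb it into a universal constant. One should also record that $\alpha-1>0$ because $p\neq q$, so that $\alpha/(\alpha-1)$ is a legitimate finite conjugate exponent. Everything else is the routine content of the preceding pointwise lemma, and the conclusion immediately gives Proposition~\ref{Mazur} in the stated range $0<q<p<\infty$ by taking $q$-th roots.
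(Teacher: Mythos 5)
Your proof is correct and coincides with the paper's own argument: both halves use the same pointwise estimates for $t\mapsto\sgn(t)|t|^{p/q}$, the lower bound by raising to the power $q$ and summing, and the upper bound by H\"older with exponents $p/q$ and $p/(p-q)$ so that the weight $\max\{|x_n|,|y_n|\}$ appears to the power $p$ and is absorbed, via the unit-sphere hypothesis, into the constant $2^{1-q/p}$. Nothing to add.
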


\bigskip

\begin{proof}
\begin{align*}
\sum_{n=1}^\infty\vert M_{p,q}(x)(n)-M_{p,q}(y)(n)\vert^q&=\sum_{n=1}^{\infty}\vert sgn(x_n)\vert x_n\vert^{p/q}-sgn(y_n)\vert y_n\vert^{p/q}\vert^q\\
&\ge c_{p/q}^q\sum_{n=1}^{\infty}\vert x_n-y_n\vert^{p}
\end{align*}
and,
\begin{align*}
\sum_{n=1}^\infty\vert M_{p,q}(x)(n)-M_{p,q}(y)(n)\vert^q&\le\left(\frac{p}{q}\right)^q\sum_{n=1}^{\infty}\left(\vert x_n-y_n\vert^{q}\max\{\vert x_n\vert, \vert y_n\vert\}^{p-q}\right)
\end{align*}
\begin{align*}
\le\left(\frac{p}{q}\right)^q \left(\sum_{n=1}^{\infty}\vert x_n-y_n\vert^{p}\right)^{q/p}\left(\sum_{n=1}^\infty\max\{\vert x_n\vert, \vert y_n\vert\}^{p}\right)^{\frac{p-q}{p}}\\
\le\left(\frac{p}{q}\right)^q\cdot 2^{1-q/p}\left(\sum_{n=1}^{\infty}\vert x_n-y_n\vert^{p}\right)^{q/p}\\
\end{align*}
\end{proof}

\begin{bibsection}
\begin{biblist}
\bib{Albiac2008}{article}{
  author={Albiac, F.},
  title={Nonlinear structure of some classical quasi-Banach spaces and F-spaces},
  journal={J. Math. Anal. Appl.},
  volume={340},
  date={2008},
  pages={1312\ndash 1325},
}

\bib{AlbiacBaudier2013}{article}{
author={Albiac, F.},
author={Baudier, F.},
title={Embeddability of snowflaked metrics and applications to the nonlinear geometry of the spaces $\ell_p$ and $L_p$ for $0<p<\infty$},
journal={J. Geom. Anal},
volume={},
date={},
pages={to appear},
}

\bib{GoulnaraDrutuSapir2009}{article}{
  author={Arzhantseva, G.},
  author={Dru{\c {t}}u, C.},
  author={Sapir,M.},
  title={Compression functions of uniform embeddings of groups into {H}ilbert and {B}anach spaces},
  journal={J. Reine Angew. Math.},
  volume={633},
  year={2009},
  pages={213--235},
}

\bib{Assouad1983}{article}{
  author={Assouad, P.},
  title={Plongements lipschitziens dans ${\bf R}\sp {n}$},
  language={French, with English summary},
  journal={Bull. Soc. Math. France},
  volume={111},
  date={1983},
  pages={429--448},
}

\bib{Austin2011}{article}{
  author={Austin, T.},
  title={Amenable groups with very poor compression into {L}ebesgue spaces},
  journal={Duke Math. J.},
  fjournal={Duke Mathematical Journal},
  volume={159},
  year={2011},
  number={2},
  pages={187--222},
}

\bib{AustinNaorTessera2010}{article}{
  author={Austin, T.},
  author={Naor, A.},
  author={Tessera, R.},
  title={Sharp quantitative nonembeddability of the Heisenberg group into superreflexive Banach spaces.},
  pages={arXiv:1007.4238},
}

\bib{Baudier2012}{article}{
  author={Baudier, F.},
  title={Embeddings of proper metric spaces into Banach spaces},
  journal={Hous. J. Math.},
  volume={38},
  date={2012},
  pages={209\ndash 223},
}

\bib{BenyaminiLindenstrauss2000}{book}{
  author={Benyamini, Y.},
  author={Lindenstrauss, J.},
  title={Geometric nonlinear functional analysis. Vol. 1},
  series={American Mathematical Society Colloquium Publications},
  volume={48},
  publisher={American Mathematical Society},
  place={Providence, RI},
  date={2000},
}

\bib{CampbellNiblo2005}{article}{
  author={Campbell, S.},
  author={Niblo, G.},
  title={Hilbert space compression and exactness of discrete groups},
  journal={J. Funct. Anal.},
  volume={222},
  year={2005},
  number={2},
  pages={292--305},
  url={http://dx.doi.org/10.1016/j.jfa.2005.01.012},
}

\bib{CheegerKleinerNaor2011}{article}{
  author={Cheeger, J.},
  author={Kleiner, B.},
  author={Naor, A.},
  title={Compression bounds for {L}ipschitz maps from the {H}eisenberg group to {$L_1$}},
  journal={Acta Math.},
  volume={207},
  year={2011},
  number={2},
  pages={291--373},
  url={http://dx.doi.org/10.1007/s11511-012-0071-9},
}

\bib{DadarlatGuentner2003}{article}{
  author={M. Dadarlat and E. Guentner},
  title={Constructions preserving {H}ilbert space uniform embeddability of discrete groups},
  journal={Trans. Amer. Math. Soc.},
  volume={355},
  year={2003},
  number={8},
  pages={3253--3275 (electronic)},
}

\bib{Enflo1969}{article}{
  author={Enflo, P.},
  title={On the nonexistence of uniform homeomorphisms between $L\sb {p}$-spaces},
  journal={Ark. Mat.},
  volume={8},
  date={1969},
  pages={103--105 (1969)},
}

\bib{Erschler2003}{article}{
  author={Erschler, A.},
  title={On isoperimetric profiles of finitely generated groups},
  journal={Geom. Dedicata},
  volume={100},
  year={2003},
  pages={157--171},
}

\bib{Gromov2008}{article}{
  author={Gromov, M.},
  title={Entropy and isoperimetry for linear and non-linear group actions},
  journal={Groups Geom. Dyn.},
  volume={2},
  year={2008},
  number={4},
  pages={499--593},
}

\bib{GuentnerKaminker2004}{article}{
  author={Guentner, E.},
  author={Kaminker, J.},
  title={Exactness and uniform embeddability of discrete groups},
  journal={J. London Math. Soc. (2)},
  volume={70},
  year={2004},
  number={3},
  pages={703--718},
}

\bib{JohnsonRandrianarivony2006}{article}{
  author={Johnson, W. B.},
  author={Randrianarivony, N. L.},
  title={$l\sb p\ (p>2)$ does not coarsely embed into a Hilbert space},
  journal={Proc. Amer. Math. Soc.},
  volume={134},
  date={2006},
  pages={1045--1050 (electronic)},
}

\bib{Kalton2007}{article}{
  author={Kalton, N. J.},
  title={Coarse and uniform embeddings into reflexive spaces},
  journal={Quart. J. Math. (Oxford)},
  volume={58},
  date={2007},
  pages={393\ndash 414},
}

\bib{Kalton2008}{article}{
  author={Kalton, N. J.},
  title={The nonlinear geometry of Banach spaces},
  journal={Rev. Mat. Complut.},
  volume={21},
  date={2008},
  pages={7--60},
}

\bib{KaltonRandrianarivony2008}{article}{
  author={Kalton, N. J.},
  author={Randrianarivony, N. L.},
  title={The coarse Lipschitz structure of $\ell _p\oplus \ell _q$},
  journal={Math. Ann.},
  volume={341},
  date={2008},
  pages={223--237},
}

\bib{KhotVishnoi2005}{article}{
  author={Khot, S.},
  author={Vishnoi, N.},
  title={The unique games conjecture, integrality gap for cut problems and embeddability of negative type metrics into $\ell _1$},
  journal={Proceedings of the 46th Annual IEEE Conference on Foundations of Computer Science},
  date={2005},
  pages={53Ð-62},
}

\bib{Kraus}{article}{
  author={Kraus, M.},
  title={Coarse and uniform embeddings between Orlicz sequence spaces},
  journal={arXiv:1207.3967v1},
  volume={},
  date={},
  pages={},
}

\bib{Lancien}{article}{
  author={Lancien, G.},
  title={A short course on nonlinear geometry of Banach spaces},
  journal={lecture notes},
  date={2011},
}

\bib{LeeMoharrami2010}{article}{
  author={Lee, J. R.},
  author={Moharrami, M.},
  title={Bilipschitz Snowflakes and Metrics of Negative Type},
  journal={STOCÕ10},
  date={2010},
}

\bib{LeeNaor2006}{article}{
  author={Lee, J. R.},
  author={Naor, A.},
  title={$L_p$ metrics on the Heisenberg group and the Goemans-Linial conjecture},
  journal={FOCSÕ06},
  date={2006},
}

\bib{MendelNaor2004}{article}{
  author={Mendel, M.},
  author={Naor, A.},
  title={Euclidean quotients of finite metric spaces},
  journal={Adv. Math.},
  volume={189},
  date={2004},
  pages={451--494},
}

\bib{MendelNaor2008}{article}{
  author={Mendel, M.},
  author={Naor, A.},
  title={Metric cotype},
  journal={Ann. of Math.(2)},
  volume={168},
  date={2008},
  pages={247\ndash 298},
}

\bib{NaorPeres2008}{article}{
   author={Naor, A.},
   author={Peres, Y.},
   title={Embeddings of discrete groups and the speed of random walks},
   journal={Int. Math. Res. Not. IMRN},
   date={2008},
   pages={Art. ID rnn 076, 34},
}

\bib{Nowak2006}{article}{
  author={P. Nowak},
  title={On coarse embeddability into {$l_p$}-spaces and a conjecture of {D}ranishnikov},
  journal={Fund. Math.},
  volume={189},
  year={2006},
  number={2},
  pages={111--116},
}

\bib{Nowak2007}{article}{
    AUTHOR = {P. Nowak},
     TITLE = {On exactness and isoperimetric profiles of discrete groups},
   JOURNAL = {J. Funct. Anal.},
    VOLUME = {243},
      YEAR = {2007},
    NUMBER = {1},
     PAGES = {323--344},
}

\bib{NowakYu}{book}{
    AUTHOR = {Nowak, P.},
 AUTHOR = {Yu, G.},
  TITLE = {Large scale geometry},
    SERIES = {EMS Textbooks in Mathematics},
 PUBLISHER = {European Mathematical Society (EMS), Z\"urich},
      YEAR = {2012},
     PAGES = {xiv+189},
}

\bib{OlshanskiiOsin}{article}{
   author={Olshanskii, A. Y.},
   author={Osin, D. V.},
   title={A quasi-isometric embedding theorem for groups},
   pages={arXiv:1202.6437},
}

\bib{Ostrovskii2009}{article}{
  author={Ostrovskii, M. I.},
  title={Coarse embeddability into {B}anach spaces},
  journal={Topology Proc.},
  volume={33},
  year={2009},
  pages={163--183},
}

\bib{Ostrovskii2012}{article}{
  author={Ostrovskii, M. I.},
  title={Embeddability of locally finite metric spaces into {B}anach spaces is finitely determined},
  journal={Proc. Amer. Math. Soc.},
  volume={140},
  year={2012},
  number={8},
  pages={2721--2730},
}

\bib{Pier1984}{book}{
  author={Pier, J.-P.},
  title={Amenable locally compact groups},
  series={Pure and Applied Mathematics (New York)},
  publisher={John Wiley \& Sons Inc.},
  address={New York},
  year={1984},
  pages={x+418},
}

\bib{Roe2003}{book}{
  author={J. Roe},
  title={Lectures on coarse geometry},
  series={University Lecture Series},
  volume={31},
  publisher={American Mathematical Society},
  address={Providence, RI},
  year={2003},
}

\bib{Tessera2011}{article}{
  author={Tessera, R.},
  title={Asymptotic isoperimetry on groups and uniform embeddings into {B}anach spaces},
  journal={Comment. Math. Helv.},
  volume={86},
  year={2011},
  number={3},
  pages={499--535},
}

\bib{Tu2001}{article}{
  author={Tu, J.-L.},
  title={Remarks on {Y}u's ``property {A}'' for discrete metric spaces and groups},
  journal={Bull. Soc. Math. France},
  volume={129},
  year={2001},
  number={1},
  pages={115--139},
}

\bib{Vershik1982}{article}{
  author={Vershik, A.},
  title={Amenability and approximation of infinite groups},
  journal={Selecta Math. Soviet.},
  volume={2},
  year={1982},
  number={4},
  pages={311--330},
}
\end{biblist}
\end{bibsection}

\end{document}